\def\Xint#1{\mathchoice
{\XXint\displaystyle\textstyle{#1}}%
{\XXint\textstyle\scriptstyle{#1}}%
{\XXint\scriptstyle\scriptscriptstyle{#1}}%
{\XXint\scriptscriptstyle%
\scriptscriptstyle{#1}}%
\!\int}
\def\XXint#1#2#3{{\setbox0=\hbox{$#1{#2#3}{%
\int}$ }
\vcenter{\hbox{$#2#3$ }}\kern-.6\wd0}}
\def\barint{\, \Xint -} 
\def\bariint{\barint_{} \kern-.4em \barint}
\def\bariiint{\bariint_{} \kern-.4em \barint}
\renewcommand{\iint}{\int_{}\kern-.34em \int} 
\renewcommand{\iiint}{\iint_{}\kern-.34em \int} 
\DeclareMathAlphabet{\mathcal}{OMS}{cmsy}{m}{n}
\theoremstyle{plain}
\newtheorem{theorem}{Theorem}[section]
\newtheorem{lemma}[theorem]{Lemma}
\newtheorem{corollary}[theorem]{Corollary}
\newtheorem{proposition}[theorem]{Proposition}
\newtheorem{assumption}[theorem]{Assumption}
\theoremstyle{definition}
\newtheorem{remark}[theorem]{Remark}
\newcommand{\R}{\mathbb{R}}
\newcommand{\C}{\mathbb{C}}
\newcommand{\N}{\mathbb{N}}
\newcommand{\Z}{\mathbb{Z}}
\newcommand{\T}{\mathbb{T}}
\newcommand{\p}{\partial}
\newcommand{\les}{\lesssim}
\newcommand{\norm}[1]{\lVert #1 \rVert}
\newcommand{\D}{D_\lambda}
\newcommand{\A}{A_\epsilon}
\newcommand{\ai}{A_\varepsilon^{-1}}
\newcommand{\lb}{\langle}
\newcommand{\rb}{\rangle}
\let\div\relax
\DeclareMathOperator{\div}{div}
\DeclareMathOperator{\sgn}{sgn}
\let\tilde\relas
\newcommand{\tilde}[1]{\widetilde{#1}}
\let\Re\relax
\DeclareMathOperator{\Re}{Re}
\let\Im\relax
\DeclareMathOperator{\Im}{Im}
\numberwithin{equation}{section}
\numberwithin{figure}{section}
\title[]{Sharp uniform-in-diffusivity mixing rates for passive scalars in parallel shear flows}
\author[Albritton]{Dallas Albritton}
\address[Dallas Albritton]{University of Wisconsin-Madison, Department of Mathematics,  480 Lincoln Dr, Madison, WI 53706, USA}
\email{dalbritton@wisc.edu}
\author[Beekie]{Rajendra Beekie}
\address[Rajendra Beekie]{Imperial College London, Department of Mathematics,  180 Queen's Gate, London, SW7 2AZ}
\email{r.beekie@imperial.ac.uk}
\date{\today}
\begin{document}

\begin{abstract}
We consider the advection-diffusion equation describing the evolution of a passive scalar in a background shear flow. We prove the optimal uniform-in-diffusivity mixing rate $\| f \|_{H^{-1}} \lesssim \langle t \rangle^{-1/(N+1)}$, $t \geq 0$, where $N$ is the maximal order of vanishing of the derivative $b'(y)$ of the shear profile, e.g., $N=1$ for plane Pouseille flow. Our proof is based on the description of the solution in terms of resolvents and involves pointwise estimates on the resolvent kernel. In the non-degenerate case, we further give a rigorous asymptotic description of generic solutions in terms of shear layers localized 
around the critical points. This verifies formal asymptotics in~\cite{Camassa2010}.
\end{abstract}





\maketitle

\tableofcontents

\parskip   2pt plus 0.5pt minus 0.5pt



\section{Introduction}





Mixing is the process in which incompressible flows homogenize a passive scalar through the rearrangement of particles. While mixing is primarily driven by transport, in reality the scalar is also subject to molecular diffusion, however small.

In this paper, we quantify mixing in the presence of small diffusivity. We focus on simple examples, namely, parallel shear flows, whose study is foundational to the area. Our underlying model is
\begin{equation}
\label{adv:diff}
    \p_t f + b(y)\p_x f = \kappa \Delta f \, ,
\end{equation}
where $b(y)$ is the shear profile and $\kappa > 0$ is the molecular diffusivity. 

The interaction between the effects of diffusion and shearing in \eqref{adv:diff} has been extensively studied in the mathematical fluids literature. A central focus has been understanding \emph{enhanced dissipation}, or the decay on 
time scales shorter than the diffusive time scale $\kappa^{-1}$. 
Despite the significant progress made in understanding~\eqref{adv:diff}, there remain fundamental questions, particularly in regard to understanding the dynamics of~\eqref{adv:diff} \emph{uniformly-in-$\kappa$}.

To illustrate what can be expected, we consider the zero diffusivity case $\kappa = 0$, which we sometimes call ``inviscid." The passive scalar $f(t,x ,y): [0, \infty) \times \T \times D \to \R $ ($D = \R $ or $\T := \R / 2\pi \Z$) evolves according to
\begin{equation}
\label{int1}
    \p_t f + b(y) \p_x f = 0 \, .
\end{equation}
We analyze the evolution mode-by-mode in $x$:
\begin{equation}
\label{int2}
    \p_t f_k + ib(y) k f_ k = 0 \, ,
\end{equation}
from which we deduce the solution formula 
\begin{equation}
\label{int3}
    f_k(t,y) = e^{-ib(y) k t} f_k^{\rm in} \, .
\end{equation}
We measure mixing by integrating against observables $\phi$. 
The method of (non-)stationary phase then yields a precise asymptotic description of the oscillatory integral
\begin{equation}
\label{int4}
    \int f_k(t,y) \phi(y) \, dy = \int e^{-ib(y) k t} f_k^{\rm in} \phi(y) \, dy \, .
\end{equation}
Let $k \neq 0$. If $b$ has no critical points and $\phi$ is smooth, then, under mild background assumptions, the above integral is $O(t^{-\infty})$: It vanishes to any polynomial order. On the other hand, if $b$ has a non-degenerate critical point, then no matter how smooth $\phi$ is, the integral decays like $t^{-1/2}$ (unless $f^{\rm in} \phi$ vanishes at the critical point). For shear profiles possessing critical points $\gamma$ of order $N$, i.e., $b^{(j)}(\gamma) = 0$, $j=1,\hdots, N$, and $b^{(N+1)}(\gamma) \neq 0$, it is known that the inviscid decay rates
\begin{equation}
    \label{eq:optimalinvisid}
    \tag{$\ast$}
    \| f_k \|_{H^{-1}} \lesssim \langle kt \rangle^{-\frac{1}{N+1}} \| f_k^{\rm in} \|_{H^1}
\end{equation}
are optimal.

This begs a simple question:
\begin{quote}
    \textbf{(Q) What mixing estimates hold with non-zero diffusivity?}
\end{quote}
In particular, does the estimate~\eqref{eq:optimalinvisid} continue to hold?

Clearly, the inviscid method, based on the exact solution formula~\eqref{int3}, fails. Moreover, even at a conceptual level, the result is not obviously true: It is known that \emph{diffusion limits mixing}~\cite{Miles2018}
in that mixing is due to the creation of small scales, but it is conjectured that diffusion creates an effective minimal length scale (sometimes known as the \emph{Batchelor scale}), cf. discussion in~\cite[Remark~1.4]{CotiZelati2023}. For example, in the sinusoidal shear flow $b(y) = \sin y$, the minimal length scale is observed to be $\sim \kappa^{1/4}$. 
By contrast, a long-time length scale is absent specifically in monotone shear flows, as in the explicit Fourier calculation for the Couette flow $b(y) = y$ on $\R$, where the typical length scale is $\sim t^{-1}$ as $t \to +\infty$~\cite{kelvin1887stability}. The $\langle t \rangle^{-1}$ rate for monotone shears was demonstrated in~\cite{CZ19}, leaving the non-monotone case open.

In this paper, we clarify the picture in two ways.
\begin{enumerate}[leftmargin=*,label=(\Roman*)]
    \item We answer $\textbf{(Q)}$ by proving~\eqref{eq:optimalinvisid} \emph{uniformly-in-$\kappa$} for arbitrary $N \geq 1$.
    \item We present a precise characterization of the asymptotic behavior of the passive scalar in the non-degenerate case $N=1$: The solutions exhibit decaying traveling wave structures localized to length scale $\kappa^{1/4}$ around the critical points. Such structures were predicted to exist in the applied literature~\cite{Camassa2010}, and their existence was posed as Open Problem~3 in~\cite[Section~4.4]{Elgindi2022Lectures}.
\end{enumerate}

Thus, the estimates~\eqref{eq:optimalinvisid} hold \emph{and} the effective minimal length scale exists. 



\subsection{Optimal mixing rates} 

First, we state the assumptions on the shear profile $b : \T \to \R$.

\begin{assumption}[Assumptions on $b$]
\label{assump:b}
There exists $\sigma_{\sharp} \in (0,1)$ and $N \in \N$ such that $b \in C^{N+2}(\T)$,
\begin{equation}
    \norm{b}_{C^{N+2}} \leq \sigma_\sharp^{-1} \, ,
\end{equation}
 and $b(y)$ has finitely many critical points $\gamma_j$, $j = 1, \hdots, J_0$, each with maximal order of vanishing at most $N$:
\begin{equation}
    \forall j \, , \exists k \leq N \text{ s.t. } b^{(k+1)}(\gamma_j) \neq 0 \, .
\end{equation}
Moreover, the distance between the critical points is bounded below as 
\begin{align}
    |\gamma_j - \gamma_l| \geq 4\sigma_\sharp,  \quad j \neq l.  
\end{align}
\end{assumption}

Under these assumptions, we have uniform-in-diffusivity mixing estimates:

\begin{theorem}
\label{thm:1}
Let $b : \T \to \R$ satisfy Assumption~\ref{assump:b} and $f^{\rm in} \in H^1(\T)$ with zero mean on streamlines:
\begin{equation}
     \langle f^{\rm in} \rangle(y) := \int_\T f^{\rm in}(x,y) \, dx = 0 \, , \quad \forall y \in \T \, .
\end{equation}
There exist $c, \kappa_0 \in (0,1]$, depending only on $b$, such that for $\kappa \in (0,\kappa_0]$, the solution $f : [0,+\infty) \times \T^2 \to \R$ to~\eqref{adv:diff} with initial datum $f^{\rm in}$ satisfies the uniform-in-diffusivity mixing and enhanced dissipation estimate
\begin{equation}
\label{main:bdd}
    \| f(t) \|_{L^2_x H^{-1}_y(\T^2)} \lesssim \langle t \rangle^{-\frac{1}{N+1}} e^{-c \kappa^{\frac{N+1}{N+3}} t} \| f^{\rm in} \|_{L^2_x H^1_y(\T^2)} \, , \quad \forall t \geq 0 \, .
\end{equation}
\end{theorem}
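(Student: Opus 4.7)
The plan is to Fourier-transform in $x$; for each $k \in \Z\setminus\{0\}$ the mode $f_k(t,y)$ satisfies $\partial_t f_k = -A_k f_k$ with $A_k u := ikb(y)u - \kappa \partial_y^2 u + \kappa k^2 u$, and the semigroup admits the Dunford representation
\[
f_k(t) = \frac{1}{2\pi i}\int_\Gamma e^{\lambda t}(\lambda + A_k)^{-1} f_k^{\rm in}\, d\lambda
\]
on a contour $\Gamma$ in the resolvent set of $-A_k$. Parametrizing $\lambda = -ikc$ and writing $R_k(c) := (A_k - ikc)^{-1}$, the resolvent is realized by solving the Rayleigh--diffusion ODE
\[
-\kappa u'' + \kappa k^2 u + ik(b(y)-c)u = g,
\]
whose Green's function $G_{k,c}(y,y')$ is the central object. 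Throughout, the natural scale will be the critical-layer width $\ell_k := (\kappa/|k|)^{1/(N+3)}$, whose $(N+1)$-st power governs both the gap and the shifted phase speeds.

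The heart of the proof is a three-regime pointwise estimate on $G_{k,c}$ depending on the location of $c \in \C$. (i) When $\Re c$ is separated from $b(\T)$, the operator is coercive and $G_{k,c}$ has exponential off-diagonal decay by straightforward energy arguments. (ii) When $c \in b(\T)$ is separated from the critical values $\{b(\gamma_j)\}$, there is a simple turning point $y_c$ with $b(y_c)=\Re c$, the relevant width is an Airy layer of size $(\kappa/|kb'(y_c)|)^{1/3}$, and outside this layer $G_{k,c}$ matches the inviscid principal-value kernel up to controlled error, which I intend to obtain by a Langer transformation together with barrier arguments. (iii) When $c$ is within $O(\ell_k^{N+1})$ of some $b(\gamma_j)$, I rescale $y = \gamma_j + \ell_k Y$ and pass to a model equation of the form
\[
-U'' + i\bigl(\tfrac{1}{(N+1)!}b^{(N+1)}(\gamma_j) Y^{N+1} - C\bigr)U = G,
\]
whose Green's function I estimate by explicit steepest-descent / Airy-type asymptotics. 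Patching regimes (ii) and (iii) through transfer estimates across the critical-layer boundary yields the uniform-in-$\kappa$ resolvent bound.

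Two features of~\eqref{main:bdd} then fall out. First, regime (iii) produces a pseudospectral lower bound $\sigma(A_k) \subset \{\Re\lambda \geq c_0 \kappa^{(N+1)/(N+3)}\}$ with commensurate resolvent norm on this line, so I may deform $\Gamma$ onto $\{\Re\lambda = -c_0\kappa^{(N+1)/(N+3)}\}$, producing the enhanced-dissipation prefactor $e^{-c_0\kappa^{(N+1)/(N+3)}t}$. Second, to extract the algebraic rate I pair $f_k(t)$ against $\phi \in H^1_y$ and exchange orders of integration, reducing $\|f_k(t)\|_{H^{-1}_y}$ to a one-dimensional oscillatory integral in $c$ with amplitude $\langle R_k(c) f_k^{\rm in}, \phi\rangle$. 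By regime (ii) this amplitude is smooth away from the critical values $b(\gamma_j)$ and contributes $O(t^{-\infty})$ via non-stationary phase; regime (iii), after undoing the rescaling, produces precisely the $\langle kt\rangle^{-1/(N+1)}$ rate that inviscid stationary phase gives in~\eqref{eq:optimalinvisid}, uniformly in $\kappa$. Summing in $k$ recovers $L^2_x$ and completes~\eqref{main:bdd}.

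The principal obstacle, and the bulk of the technical work, is regime (iii): establishing sharp pointwise Green's-function bounds for the degenerate turning-point model $-U'' + i(Y^{N+1}-C)U = G$ and showing that, on $y$-scales coarser than $\ell_k$, the diffusive kernel tracks the inviscid principal-value kernel closely enough that the oscillatory-integral mechanism behind~\eqref{eq:optimalinvisid} is preserved after summing the contributions of all $J_0$ critical points. The second subtlety will be book-keeping: ensuring that the constants in the patching between regimes (i)--(iii) depend only on $\sigma_\sharp$ from Assumption~\ref{assump:b}, which is what makes the final bound uniform in both $\kappa$ and $k$.
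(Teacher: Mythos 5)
Your plan shares with the paper the starting point — Laplace/Dunford representation, pointwise Green's function bounds for the Airy-type resolvent (the paper's Proposition~\ref{k:bdds}), and the enhanced-dissipation prefactor from a spectral gap $\gtrsim \kappa^{(N+1)/(N+3)}$. But the step where you extract the algebraic rate $\langle t\rangle^{-1/(N+1)}$ has a genuine gap. You split the spectral integral by the location of $c$ (far from critical values, near a critical value within the fixed viscous scale $\ell_k^{N+1}$), and claim regime~(ii) contributes $O(t^{-\infty})$ by non-stationary phase while regime~(iii) reproduces the inviscid stationary-phase rate. Both claims are problematic. In regime~(ii), non-stationary phase in $c$ requires control of $\partial_c^n \langle R_k(c)f_k^{\rm in},\phi\rangle$ to \emph{all} orders $n$, uniformly in $\kappa$; the viscous resolvent kernel concentrates at the critical layer $b(y)=c$ and repeated $c$-differentiation costs factors of $\varepsilon^{-1/3}|b'|^{-2/3}$, so rapid decay away from critical values is in fact an open problem (the paper raises exactly this as Future Question~(I) on localized mixing; the ``leakage'' from diffusion is the obstruction). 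In regime~(iii) the phase $e^{-ikct}$ is \emph{linear} in $c$ and has no stationary point; the rate $(kt)^{-1/(N+1)}$ in the inviscid estimate comes from a Van der Corput argument in the \emph{physical} variable $y$ with a \emph{time-dependent} cutoff scale $t^{-1/(N+1)}$, not from a $c$-space decomposition at the fixed viscous scale $\ell_k^{N+1}$. Your plan never introduces a time-dependent scale, so it is unclear how you would see $t^{-1/(N+1)}$ in the intermediate window $1 \ll kt \ll \kappa^{-(N+1)/(N+3)}$, where neither the viscous scale nor the exponential prefactor is yet dominant.

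What the paper does instead — and what is missing from your plan — is to split the \emph{initial data} in $y$, not the spectral integral in $c$, at the time-dependent, viscosity-saturated scale $\ell(t) \sim \max\bigl(t^{-1/(N+1)}, \kappa^{1/(N+3)}\bigr)$ around each critical point. The inner piece (small support) is estimated by pairing with the $L^\infty$ enhanced-dissipation semigroup applied to the test function, which directly yields $\ell(t)\, e^{-c\kappa^{(N+1)/(N+3)}t}$. For the outer piece one integrates by parts \emph{once} in $\lambda$ using a $\lambda$-independent ``good derivative'' $D_\lambda = a(t,y)\p_y + \p_\lambda$ whose coefficient $a$ vanishes in the inner region; the $1/t$ gain is then traded against commutator losses of size $\ell(t)^{-N}$ from both the cutoff and the viscous terms $[\varepsilon\p_y^2, D_\lambda]$, controlled using the pointwise kernel bounds. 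Optimizing $\ell(t)^{-N}/t$ against the inner contribution $\ell(t)$ gives the rate. Without the time-dependent physical decomposition and the single-IBP good-derivative structure, the route you outline does not close.
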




\begin{remark}
The exponential factor in~\eqref{main:bdd} quantifies the enhanced dissipation, which falls under the ``shear-diffuse mechanism" in the applied literature, see~\cite{RhinesYoung1983, HKMoffatt_1983, BAJER_BASSOM_GILBERT_2001}. In the language of~\cite{CZ19}, Theorem \ref{thm:1}  is a ``stable mixing estimate." 
\end{remark}




Observe that for $t \lesssim T_{\rm e} := \kappa^{-\frac{N+1}{N+3}}$, the ``enhanced dissipation time scale," the exponential factor in~\eqref{main:bdd} is negligible: The solution to~\eqref{adv:diff}, which we refer to as the ``viscous" solution, is guaranteed to decay with (at least) the optimal inviscid rate $\langle t \rangle^{-\frac{1}{N+1}}$. However, for $t \gtrsim T_{\rm e}$, the viscous solution is qualitatively different from the inviscid solution,\footnote{In fact, the viscous solution will be qualitatively different from the inviscid solution already for $t \gtrsim \kappa^{-1/3}$, due to better enhanced dissipation in the monotone regions which we do not quantify in~\eqref{main:bdd}.} as is visible by the transition from algebraic to exponential decay. To describe what is going on at this transition, we crucially extract the changing length scale of the solution near the critical points. 



It is useful to recall how the estimates~\eqref{eq:optimalinvisid} are proved in the inviscid case.\footnote{This is essentially the proof of the Van der Corput lemma.} We decompose the solution (equivalently, the initial datum) at time $t$ into the parts near the critical points $\gamma_j$ of $b(y)$ and away from those points:
\begin{equation}
    \int f_k(t,y) \phi \, dy = \sum_j \int_{|y-\gamma_j| \leq \ell_j(t)} + \int_{\rm complement} e^{-i b(y)kt} f^{\rm in}_k  \phi \, dy \, .
\end{equation}
In the ``inner" region near the critical point, we take the support of the initial data small in a $t$-dependent way and use that the initial data is in $L^\infty$ to gain the size of the support, $\ell_j(t)$. 
In the ``outer" region away from the critical points, we integrate by parts off of $e^{-ib(y)kt}$ in~\eqref{int4} and utilize the smoothness of the initial data to gain, in the worst case, $(\ell_j(t)^{N} kt)^{-1}$. Finally, we optimize the size of the cutoff region, e.g., $\ell_j(t) \sim t^{-1/2}$ when $N=1$, to obtain the desired rates~\eqref{eq:optimalinvisid}.

Now consider the viscous problem. We attempt to follow the same strategy as before and decompose the solution into an inner and outer part. However, the relevant length scale may be determined by both shearing and diffusion. To understand where these effects balance, we seek a scaling in which all terms become $O(1)$. For the sake of discussion, assume that we are considering a critical point with $N = 2$ and replace~\eqref{adv:diff} with the model equation
\begin{align}
    \p_t f + i y^2 f = \kappa \p_y^2 f \, .
\end{align}
The relevant scaling is $f(t,y ) = F(T,Y)$ with $T = \kappa^{1/2}t$ and $Y = \frac{y}{\kappa^{1/4}}$:
\begin{equation}
    \label{eq:zoominontimedep}
    \p_T F + i Y^2 F = \p_Y^2 F \, .
\end{equation}
This suggests that, at length scales $\kappa^{1/4}$ around a critical point of order $N=1$ and timescales $\kappa^{-1/2}$, diffusion and advection should balance. Therefore, the inviscid length scale $t^{-1/2}$ should be relevant until $t^{-1/2} \approx \kappa^{1/4}$, at the enhancement time $t \approx \kappa^{-1/2}$.  
Therefore, in order to prove the optimal bounds, we should \emph{saturate} the cutoff scale $\ell(t)$ used to divide between the inner and outer initial data at the length scale $\kappa^{1/4}$. The global-in-time bounds on the inner region will then follow from a combination of enhanced dissipation and the smallness of the support of the initial data. On the outer region, we will exploit the shearing where it is still sufficiently strong. However, the problem cannot be so easily localized, as the diffusion causes ``leakage" between the two regions which cannot be ignored.\footnote{Indeed, the diffusion length scale is $(\kappa t)^{1/2}$, which at the enhancement time $\kappa^{-1/2}$ is comparable to the inner length scale $\kappa^{1/4}$. One interpretation is that, by the enhancement time, particles initially in the outer region will have already sampled the inner region, and vice versa.} The approach we take to circumvent this difficulty and gain access to all of these effects is to analyze the \emph{pointwise} behavior of the resolvent of the operator $\kappa \p_y^2 - ib(y)$ -- see Section~\ref{sec:resolve}. 

\begin{remark}
\label{rem:k=1}
The viscous analogue to the mode-by-mode problem~\eqref{int2} is
\begin{equation}
    \p_t f_k + ikb(y) f_k = \kappa (-k^2 + \p_y^2 f_k) \, .
\end{equation}
It suffices to consider the behavior when $k > 0$, since solutions with $k < 0$ may be obtained by complex conjugation. The PDE for the unknown $g_k(s,y) = e^{\kappa k^2 t} f_k(y,t)$, $s = kt$, is
\begin{equation}
    \p_s g_k + ib(y) g_k = \frac{\kappa}{k} \p_y^2 g_k \, .
\end{equation}
Upon relabeling $s$ and $g_k$, we obtain the problem
\begin{equation}
    \label{eq:equationafterreduction}
    \p_t f + ib(y) f = \varepsilon \p_y^2 f
\end{equation}
for a function $f : [0,+\infty) \times \T \to \C$, keeping in mind the time-rescaling and $\varepsilon := \kappa/k$. Below, $\kappa$ appears when we analyze the full problem~\eqref{adv:diff}, whereas $\varepsilon$ appears when we analyze the $k=1$ mode.
\end{remark}


\subsection{Asymptotic description and limiting length scale}

We now discuss the long-time asymptotics of~\eqref{adv:diff}. Given Remark~\ref{rem:k=1}, our main focus will be on the equation~\eqref{eq:equationafterreduction}. Generically, the asymptotic shape of $f$ should be represented by the ``slowest eigenfunctions", i.e., those corresponding to the eigenvalues $\lambda$ of
\begin{equation}
    \label{eq:Lvarepsilondef}
    L_\varepsilon := \varepsilon \p_y^2 - i b(y) : H^2(\T) \subset L^2(\T) \to L^2(\T)
\end{equation}
with maximal real part. We will characterize such eigenfunctions $\phi$ when $b$ has finitely many critical points $\gamma_j$, each of which is non-degenerate: $N=1$.

We expect $\phi$ to be concentrated around a critical point $\gamma_j$, where the shear is the weakest. We stretch variables in the eigenvalue equation
\begin{equation}
    [\lambda + ib(\gamma_j)] \phi + i [b(y) - b(\gamma_j)] \phi = \varepsilon \p_y^2 \phi
\end{equation}
via the transformation
\begin{equation}
    \label{eq:innervariables1}
    Y = \frac{y-\gamma_j}{\tilde{\ell_j} \varepsilon^{1/4}} \, , \quad \tilde{\ell_j} := |b''(\gamma_j)/2|^{1/4}
\end{equation}
\begin{equation}
    \label{eq:lambdascaling}
    \Lambda = \tilde{\tau_j} \varepsilon^{-1/2} \lambda + ib(\gamma_j) \, , \quad \tilde{\tau_j} := |b''(\gamma_j)/2|^{-1/2}
\end{equation}
\begin{equation}
    \label{eq:innervariables3}
     \Phi(Y) = \phi(y) \, , \quad B(Y) = \tilde{\ell}_j^{-2} \varepsilon^{-1/2} [b(y) - b(\gamma_j)] \, ,
\end{equation}
the spectral equivalent of the transformation in~\eqref{eq:zoominontimedep}, to obtain
\begin{equation}
    \label{eq:innerequationfromthebeginning}
    \Lambda \Phi + i \sgn b''(\gamma_j) \; Y^2 \Phi = \p_Y^2 \Phi + O(\varepsilon^{1/4} Y^3) \, .
\end{equation}
The equation~\eqref{eq:innerequationfromthebeginning}, without the big Oh term, is the eigenvalue problem for a \emph{complex harmonic oscillator} (see~\cite{DaviesComplexHarmonic}),  whose spectrum may be obtained from that of the quantum harmonic oscillator $\p_Y^2 - Y^2$ by a rotation in the complex plane. Specifically, the eigenvalues and eigenfunctions of $\p_Y^2 \mp iY^2$ are
\begin{equation}
    \label{eq:lambdascaling2}
\Lambda_\alpha^{\pm} = - (2\alpha + 1) e^{\pm i\pi/4} \, , \quad \Phi_\alpha^{\pm}(Y) := e^{\pm i\pi/16} G_\alpha(e^{\pm i\pi/8}  Y) \, , \quad \alpha \in \N_0 \, ,
\end{equation}
and $G_\alpha$ is the $\alpha$th Hermite function (see Section~\ref{sec:efngluing}). Consequently, $\lambda \approx \varepsilon^{1/2}$, which is consistent with the enhanced dissipation timescale. The concentrated eigenfunctions correspond to the shear layers observed numerically.



The above asymptotics motivate us to provide the following \emph{full asymptotic characterization} of the spectrum of $L_\varepsilon$ in the spectral windows $\{ \Re \lambda \geq - q \varepsilon^{1/2} \}$ for arbitrary $q \geq 1$. 

\begin{theorem}
    \label{thm:eigenfunctions}
Let $b(y) : \T \to \R$ be a smooth function with finitely many critical points $\gamma_j$, $j = 1,\hdots, J_0$, each of which is non-degenerate: $b''(\gamma_j) \neq 0$. Suppose that the wave speeds $b(\gamma_j)$, $j = 1,\hdots,J_0$, are distinct. Let $L_\varepsilon$ be the operator in~\eqref{eq:Lvarepsilondef}. 

Let $q \geq 1$. There exist $\varepsilon_0 > 0$ (depending only on $b$ and $q$) and $A_j \in \N_0$, $j = 1, \hdots, J_0$, such that, for all $\varepsilon \in (0,\varepsilon_0]$, we have
\begin{equation}
\sigma(L_\varepsilon) \cap \{ \Re \lambda \geq - q \varepsilon^{1/2} \} = \sqcup_{j=1}^{J_0} \sqcup_{\alpha=0}^{A_j} \{ \lambda^{(j)}_\alpha(\varepsilon) \} \, ,
\end{equation}
where each spectral value $\lambda^{(j)}_\alpha(\varepsilon)$ is an algebraically simple eigenvalue satisfying the asymptotics
\begin{equation}
    \label{eq:evalremainderass}
\lambda^{(j)}_\alpha(\varepsilon) + ib(\gamma_j) = \tilde{\tau_j} \varepsilon^{1/2} \Lambda^{\pm}_\alpha + O_{\varepsilon \to 0^+}(\varepsilon^{3/4}) \, ,
\end{equation}
$\pm$ chosen to match $\sgn b''(\gamma_j)$,
with corresponding eigenfunction $\phi^{(j)}_\alpha(y;\varepsilon)$, which may be normalized to satisfy the asymptotics
\begin{equation}
\label{evalexp}
\phi^{(j)}_\alpha(y;\varepsilon) = \Phi_\alpha^{\pm} \left( \frac{y-\gamma_j}{\tilde{\ell}_j \varepsilon^{1/4}} \right) + O_{\varepsilon \to 0^+}(\varepsilon^{3/8}) \text{ in } L^2(\T) \, .
\end{equation}
\end{theorem}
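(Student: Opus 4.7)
The plan is a semiclassical construction plus a Grushin/Riesz-projection argument: build approximate eigenpairs near each critical point from the complex harmonic oscillator, obtain a resolvent bound away from them, and conclude via contour integration.

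Step 1 (approximate eigenpairs). Because the wave speeds $b(\gamma_j)$ are distinct, the spectral window $\{\Re\lambda \geq -q\varepsilon^{1/2}\}$ splits horizontally into disjoint slabs around each horizontal line $\{\Im\lambda = -b(\gamma_j)\}$, so each would-be eigenvalue is attached to exactly one critical point. Fix $j$; take a cutoff $\chi_j \in C^\infty_c(\T)$ equal to $1$ on $|y-\gamma_j| \leq \sigma_\sharp$ and supported in $|y-\gamma_j|\leq 2\sigma_\sharp$. For each $\alpha \in \N_0$ with $(2\alpha+1)\tilde{\tau}_j/\sqrt{2} \leq q$, i.e.\ for $\alpha=0,\hdots,A_j$, set
\begin{equation*}
\phi^{(j)}_{\alpha,\mathrm{app}}(y) := \chi_j(y)\,\Phi^{\pm}_\alpha\!\left(\frac{y-\gamma_j}{\tilde{\ell}_j\varepsilon^{1/4}}\right),\qquad \lambda^{(j)}_{\alpha,\mathrm{app}} := -ib(\gamma_j) + \tilde{\tau}_j\varepsilon^{1/2}\Lambda^{\pm}_\alpha.
\end{equation*}
Taylor-expand $b(y)-b(\gamma_j) = \tfrac{1}{2}b''(\gamma_j)(y-\gamma_j)^2 + O((y-\gamma_j)^3)$. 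In the inner variable $Y$, the $O$-term becomes $\varepsilon^{3/4}\cdot O(Y^3)$, and since $Y^k\Phi^{\pm}_\alpha(Y)$ decays as a rotated Gaussian, it lies in $L^2_Y$. The cutoff commutator is exponentially small in $\varepsilon$ by the same Gaussian decay. Hence
\begin{equation*}
\bigl\|(L_\varepsilon - \lambda^{(j)}_{\alpha,\mathrm{app}})\phi^{(j)}_{\alpha,\mathrm{app}}\bigr\|_{L^2(\T)} \lesssim \varepsilon^{3/4}.
\end{equation*}

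Step 2 (resolvent bound). For $\lambda$ in the window, I want $\|(L_\varepsilon-\lambda)^{-1}\| \lesssim \varepsilon^{-1/2}$ whenever $\lambda$ is at distance $\gtrsim \varepsilon^{1/2}$ from every $\lambda^{(j)}_{\alpha,\mathrm{app}}$. I would localize with the same $\chi_j$ and a complementary outer cutoff $\chi_{\mathrm{out}}$. In the outer region, $|b(y)-\Im(i\lambda)|\gtrsim 1$ because each $\lambda$ in the window is close to some line $\Im\lambda = -b(\gamma_{j_\star})$ and $|b(y)-b(\gamma_{j_\star})|\gtrsim \sigma_\sharp^2$ off the inner regions; testing against $u$ and taking imaginary parts yields coercivity of the multiplication operator and hence invertibility. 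In the inner region near $\gamma_j$, the rescaling~\eqref{eq:innervariables1}--\eqref{eq:innervariables3} reduces $L_\varepsilon - \lambda$ to $\p_Y^2 \mp iY^2 - \Lambda + O(\varepsilon^{1/4}Y^3)$; the limit operator's resolvent is bounded on compact subsets of its resolvent set by the theory of the complex harmonic oscillator (cf.~\cite{DaviesComplexHarmonic}), and the $O(\varepsilon^{1/4})$ perturbation is absorbed by a Neumann series. Patching via an IMS-type identity and using the distinctness of wave speeds $|b(\gamma_j)-b(\gamma_l)|\gtrsim 1$ to decouple different critical points yields the claimed bound on the full resolvent.

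Step 3 (contour integration). Let $\Gamma^{(j)}_\alpha$ be a circle of radius $\sim \varepsilon^{5/8}$ (chosen to dominate $\varepsilon^{3/4}\cdot\|\text{resolvent}\| \sim \varepsilon^{1/4}$ and still be much smaller than the spacing $\varepsilon^{1/2}$). The resolvent bound of Step 2 on the complement of $\sqcup \Gamma^{(j)}_\alpha$ inside the window rules out any other spectrum there. Inside each $\Gamma^{(j)}_\alpha$, the Riesz projection $P^{(j)}_\alpha = -\frac{1}{2\pi i}\oint_{\Gamma^{(j)}_\alpha}(L_\varepsilon-\lambda)^{-1}\,d\lambda$ has rank one because $P^{(j)}_\alpha \phi^{(j)}_{\alpha,\mathrm{app}}$ differs from $\phi^{(j)}_{\alpha,\mathrm{app}}$ by $O(\varepsilon^{1/4})$ in $L^2$ (apply the resolvent identity to $\phi^{(j)}_{\alpha,\mathrm{app}}$ using the Step 1 remainder), while $P^{(j)}_\alpha \psi$ for $\psi$ orthogonal to the approximate eigenspace is $O(\varepsilon^{1/4})$, forcing rank exactly one. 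Algebraic simplicity follows because the restriction of $L_\varepsilon - \lambda^{(j)}_\alpha$ to the range is nilpotent on a one-dimensional space, hence zero. The eigenvalue asymptotic~\eqref{eq:evalremainderass} falls out of $\lambda^{(j)}_\alpha - \lambda^{(j)}_{\alpha,\mathrm{app}} = \mathrm{tr}[(L_\varepsilon - \lambda^{(j)}_{\alpha,\mathrm{app}})P^{(j)}_\alpha]$ evaluated on $\phi^{(j)}_{\alpha,\mathrm{app}}$, and the eigenfunction asymptotic~\eqref{evalexp} follows after normalization, with remainder $O(\varepsilon^{3/8}) = O((\varepsilon^{-1/2}\cdot\varepsilon^{3/4})^{1/2})$ improved by exploiting orthogonality to the kernel.

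The main obstacle I anticipate is the uniform resolvent estimate in Step 2: the outer coercivity is soft, but in the transition zone $|y-\gamma_j|\sim\sigma_\sharp$ and for $\lambda$ at the spectral threshold, both the outer and inner bounds degenerate simultaneously, and the pointwise resolvent-kernel techniques flagged in Section~\ref{sec:resolve} seem to be needed to glue them without losing the $\varepsilon^{-1/2}$ scaling. A secondary subtlety is confirming the sharp $\varepsilon^{3/4}$ eigenvalue remainder, which requires that the first-order correction coming from the cubic Taylor term $\tfrac{1}{6}b'''(\gamma_j)(y-\gamma_j)^3$ give a matrix element vanishing to leading order against the Hermite modes (a symmetry/parity computation on $\Phi^{\pm}_\alpha$), rather than only the weaker $\varepsilon^{1/2}$ one would get naively.
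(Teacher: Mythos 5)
Your route — build quasimodes from the complex harmonic oscillator, prove a window-wide resolvent bound away from them, and extract spectrum by Riesz projections — is a genuinely different strategy from the paper's, which uses an inner-outer gluing decomposition with overlapping cutoffs and a Lyapunov-Schmidt reduction. The gluing approach constructs the eigenpair as a fixed point of a coupled system (projected inner equation for the perturbation $\Phi$, outer equation for $g$, one-dimensional reduced equation for $\mu$), then proves uniqueness by reversing the roles of the cutoffs and showing any eigenpair in the window is forced into the same form. Your approach would, if complete, be an attractive alternative, since it separates the construction of quasimodes (soft) from the resolvent analysis (hard) in a standard semiclassical template. However, there is a real gap, and some of the numerology needs repair.

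The gap is precisely where you flag trouble: Step 2. Your sketch covers the outer coercivity (correct, via wave-speed separation and monotonicity) and the inner problem ``on compact subsets of the resolvent set'' of the complex harmonic oscillator. But the window $\{\Re\lambda \geq -q\varepsilon^{1/2}\}$ contains spectral parameters with $\Im\lambda$ close to $-b(\gamma_j)$ at scale $o(1)$ but $\gg \varepsilon^{1/2}$; in the rescaled variable $\Lambda = \tilde\tau_j\varepsilon^{-1/2}[\lambda+ib(\gamma_j)]$, these have $|\Im\Lambda|$ unboundedly large as $\varepsilon\to 0$, so the complex harmonic oscillator resolvent is \emph{not} being used on a fixed compact set. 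One must show $\|(\p_Y^2 - iY^2 - \Lambda)^{-1}\| \to 0$ as $|\Im\Lambda|\to\infty$ within the strip $\Re\Lambda\in[-q\tilde\tau_j^{-1},0]$, uniformly in the $O(\varepsilon^{1/4}Y^3)$ perturbation. This is exactly Lemma~\ref{lem:auxedlem} in the paper ($\|F\|\lesssim\langle A\rangle^{-1/3}\|H\|$), and it is not a straightforward Neumann-series argument: the proof there needs an Airy-type balance between the coercivity of $(Y^2 - A)^2$ away from the turning points $Y\approx\pm\sqrt{A}$ and a Poincar{\'e} inequality across a thin turning-point layer of width $A^{-1/3}$. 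Without this ingredient (or an equivalent one), you cannot rule out spurious spectrum at intermediate heights, and the contour argument in Step 3 does not begin.

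Two smaller points. First, the Step 3 numerology is inconsistent as written: you quote a resolvent bound $\lesssim\varepsilon^{-1/2}$ valid ``at distance $\gtrsim\varepsilon^{1/2}$,'' then apply it on a circle of radius $\varepsilon^{5/8}\ll\varepsilon^{1/2}$. The standard fix — bound the resolvent on the small circle by $\sim(\text{radius})^{-1}$ once you know the enclosed spectrum is a single simple eigenvalue near the center — does work, but then the projection estimate gives an $\varepsilon^{3/4}/\text{radius}$ error, and one must also track the Jacobian $\ell_1^{1/2}\sim\varepsilon^{1/8}$ between $L^2(dY)$ and $L^2(\T)$ to recover the stated $O(\varepsilon^{3/8})$; your displayed identity $O(\varepsilon^{3/8})=O((\varepsilon^{-1/2}\varepsilon^{3/4})^{1/2})$ is not correct as arithmetic. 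Second, the parity cancellation you worry about for the cubic Taylor term is unnecessary for $O(\varepsilon^{3/4})$: the quasimode residual $\|(L_\varepsilon-\lambda_{\rm app})\phi_{\rm app}\|\lesssim\varepsilon^{3/4}$ already forces ${\rm dist}(\lambda_{\rm app},\sigma(L_\varepsilon))\lesssim\varepsilon^{3/4}$ directly, with no resolvent bound required for that particular conclusion. (Parity would be needed to improve $\varepsilon^{3/4}$ to $\varepsilon$, which neither you nor the paper claims.)
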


Theorem~\ref{thm:eigenfunctions} is illustrated in Figures~\ref{fig:evals} and~\ref{fig:efns}. The parameter $\alpha \in \N_0$ indexes a family of eigenfunctions \emph{for each critical point}~$\gamma_j$. The asymptotics of $\lambda^{(j)}_\alpha$ and $\phi^{(j)}_\alpha$ are also available to arbitrary order, as we describe in Section~\ref{sec:higherorderasymptotics}.

\begin{remark}
    The condition that the wave speeds are distinct is merely technical and can probably be removed. Versions of the above theorem should hold with Dirichlet and Neumann boundary conditions in $y$ provided that the critical points $\gamma_j$ are away from the boundary.
\end{remark}


\begin{figure}
\centering
\begin{minipage}{.5\textwidth}
  \centering
  \includegraphics[width=\linewidth]{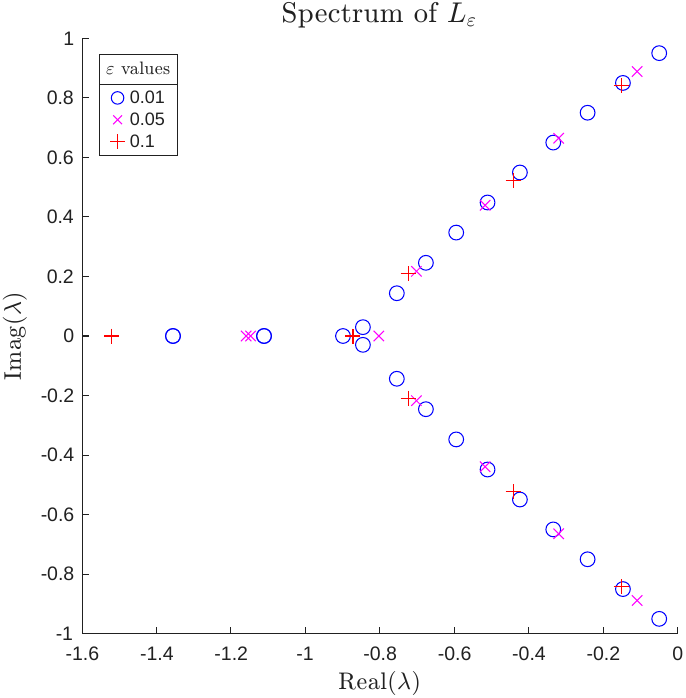}
\end{minipage}%
\begin{minipage}{.5\textwidth}
  \centering
  \includegraphics[width=\linewidth]{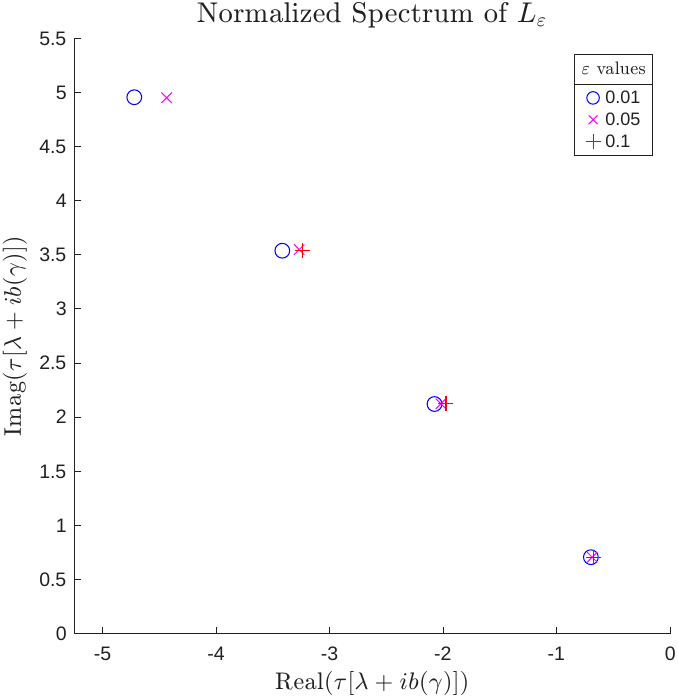}
\end{minipage}
\caption{Left: First 25, 12, and 8 eigenvalues of the operator $L_\varepsilon = \varepsilon \p_y^2 - i \sin y$ with sinusoidal shear flow and $\varepsilon = 0.01, 0.05, 0.1$, respectively.  Compare~\cite[Figure 22.7, p. 223]{TrefethenEmbree} for the complex Airy operator $\varepsilon \p_y^2 + iy$ with Dirichlet boundary conditions. Right: First 4 normalized eigenvalues $\Lambda_\alpha$, $\alpha = 0, 1, 2, 3$, in the notation of~\eqref{eq:lambdascaling} and~\eqref{eq:lambdascaling2}, from the ``lower branch": $\gamma = \pi/2$ and $b(\gamma) = 1$. Here, $\tau = (2/\varepsilon)^{1/2}$ is the scaling factor in~\eqref{eq:lambdascaling}.}
\label{fig:evals}
\end{figure}


\begin{figure}
\centering
\begin{minipage}{.5\textwidth}
  \centering
  \includegraphics[width=\linewidth]{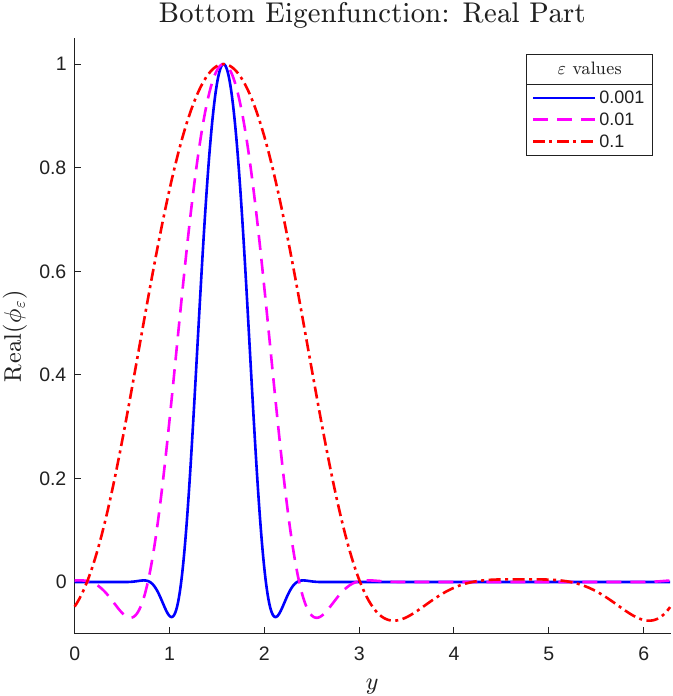}
\end{minipage}%
\begin{minipage}{.5\textwidth}
  \centering
  \includegraphics[width=\linewidth]{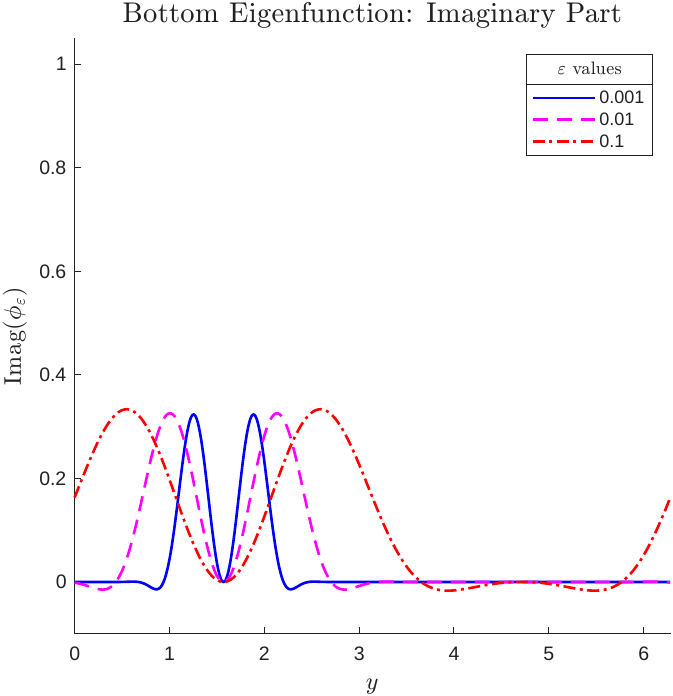}
\end{minipage}
\caption{Bottom eigenfunction, concentrated around $y = \pi/2$, of the operator $L_\varepsilon = \varepsilon \p_y^2 - i \sin y$ with sinusoidal shear flow and $\varepsilon = 10^{-3}, 10^{-2}, 10^{-1}$.}
\label{fig:efns}
\end{figure}

As a corollary, we prove that, generically (outside of a finite-dimensional subspace $E_\kappa$), the full solution to~\eqref{adv:diff} asymptotically decomposes as a superposition of decaying traveling waves localized to ``shear layers" around the critical points. 
Such traveling waveforms are illustrated in Figure~\ref{fig:traveling}. This answers Open Problem \#3 in Section~4.4 of~\cite{Elgindi2022Lectures} and rigorously verifies the numerical simulations in~\cite{Camassa2010}. 

\begin{corollary}
    \label{cor:spectralcorollary}
    Let $b$ as in Theorem~\ref{thm:eigenfunctions}. There exist $K_0 \leq J_0$ and $\kappa_0 > 0$ such that for all $\kappa \in (0,\kappa_0]$, there exists a subspace $E_\kappa \subset L^2$ with $\dim E_\kappa \leq 2 K_0$ such that each solution $f$ to~\eqref{adv:diff} with $L^2$ initial datum $f^{\rm in} \not\in E_\kappa$ and $\langle f^{\rm in} \rangle = 0$ asymptotically decomposes into a superposition of at most $K_0$ decaying, traveling waves which decay with rate $e^{t [-\kappa+\max \sigma(L_\kappa)]}$. For these solutions, there exist $0 < \underbar{c} < \bar{c}$, depending only on $b$, such that the quantity
    \begin{equation}
        \label{eq:elltdef}
\ell(t) := \frac{\| f(t) \|_{L^2}}{\| f(t) \|_{\dot H^1}}
\end{equation}
satisfies the asymptotics
    \begin{equation}
        \label{eq:conclusionofelltdef}
\limsup_{t \to +\infty} \ell(t) \leq \bar{c} \kappa^{1/4} \, , \quad
    \liminf_{t \to +\infty} \ell(t) \geq \underbar{c} \kappa^{1/4} \, .
\end{equation}
\end{corollary}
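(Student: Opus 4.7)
The corollary should follow by combining the spectral asymptotics of Theorem~\ref{thm:eigenfunctions} with a semigroup decomposition along the lines of the resolvent analysis underlying Theorem~\ref{thm:1}. First, I would Fourier decompose $f(t,x,y) = \sum_{k \neq 0} e^{ikx} f_k(t,y)$, noting that only $k \neq 0$ contributes since $\langle f^{\rm in}\rangle = 0$. Each mode evolves under $A_k := \kappa(\p_y^2 - k^2) - ikb(y)$, and the scaling from Remark~\ref{rem:k=1} identifies $\sigma(A_k)$ with $-\kappa k^2 + k \cdot \sigma(L_{\kappa/|k|})$ (complex conjugating when $k<0$). Applying Theorem~\ref{thm:eigenfunctions} at $\varepsilon = \kappa/|k|$ gives
\begin{equation*}
\max \Re \sigma(A_k) = -\kappa k^2 - \tfrac{1}{\sqrt 2}\,(\min_j \tilde{\tau}_j)\,|k|^{1/2} \kappa^{1/2} + O(\kappa^{3/4}),
\end{equation*}
which is maximized strictly at $|k|=1$, with an $\Omega(\kappa^{1/2})$ spectral gap to the $|k|\geq 2$ modes. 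Setting $K_0 \leq J_0$ to be the number of critical points realizing $\min_j \tilde{\tau}_j$ (equivalently, $\max_j|b''(\gamma_j)|$), Theorem~\ref{thm:eigenfunctions} produces $K_0$ algebraically simple top eigenvalues $\mu_j^+(\kappa) := -\kappa + \lambda_0^{(j)}(\kappa)$ of $A_1$ (with conjugates $\mu_j^- = \overline{\mu_j^+}$ for $A_{-1}$), each with eigenfunction $\Psi_j^\pm(x,y) = e^{\pm ix}\phi_0^{(j)}(y;\kappa)$ and wave speed $\Im \mu_j^\pm \approx \mp b(\gamma_j)$.

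Next, for each $(j,\pm)$ I would introduce the rank-one Riesz projector $P_j^\pm$ cut out by a small contour around $\mu_j^\pm$, and use the pointwise resolvent estimates developed for Theorem~\ref{thm:1} together with the spectral gap to expand
\begin{equation*}
f(t) \;=\; \sum_{j=1}^{K_0}\bigl[e^{t\mu_j^+}P_j^+ f^{\rm in}\,\Psi_j^+ \,+\, e^{t\mu_j^-}P_j^- f^{\rm in}\,\Psi_j^-\bigr] \,+\, R(t),
\end{equation*}
with $\|R(t)\|_{L^2(\T^2)} \lesssim e^{t\,\Re \mu_j^+ - c\kappa^{1/2} t}\|f^{\rm in}\|_{L^2}$ for some $c>0$ depending only on $b$. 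I would define $E_\kappa$ as the finite-dimensional exceptional subspace dual to the direct sum of projections, so that $f^{\rm in}\notin E_\kappa$ guarantees at least one nontrivial $P_j^\pm f^{\rm in}$; the leading sum then strictly dominates as $t\to\infty$, exhibiting $f$ as a superposition of at most $K_0$ real-valued decaying traveling waves with speeds $b(\gamma_j)$, each concentrated in a shear layer of width $\kappa^{1/4}$ near $\gamma_j$.

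For the length-scale assertion~\eqref{eq:conclusionofelltdef}, I would use~\eqref{evalexp} together with the inner variables~\eqref{eq:innervariables1}--\eqref{eq:innervariables3} to read off $\|\phi_0^{(j)}\|_{L^2} \sim \kappa^{1/8}$ and $\|\p_y \phi_0^{(j)}\|_{L^2} \sim \kappa^{-1/8}$. Because $|\gamma_j - \gamma_{j'}| \geq 4\sigma_\sharp$ and the $\phi_0^{(j)}$ inherit the Gaussian decay of $\Phi_0^\pm$, the overlaps $\langle \phi_0^{(j)}, \phi_0^{(j')}\rangle$ are exponentially small in $\kappa^{-1/2}$ and can be discarded; different $x$-frequencies decouple in $L^2(\T^2)$. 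Substituting the expansion into $\ell(t)^2 = \|f\|_{L^2}^2/(\|\p_x f\|_{L^2}^2 + \|\p_y f\|_{L^2}^2)$, the $\p_y$-contribution $\sim \kappa^{-1/4}$ dominates the $\p_x$-contribution $\sim \kappa^{1/4}$, yielding $\ell(t) \sim \kappa^{1/4}$ after the $e^{2\Re\mu_j^+ t}$ factors cancel. The spread between $\liminf$ and $\limsup$ reflects the oscillation of $\ell(t)$ among constants $\tilde{\ell}_j$ associated to the finitely many dominant critical points (driven by the $O(\kappa^{3/4})$ imbalances in $\Re \mu_j^+$ and by residual diagonal coefficients $|P_j^\pm f^{\rm in}|^2$); the two-sided bounds $\underbar{c}\kappa^{1/4} \leq \liminf\ell(t) \leq \limsup \ell(t) \leq \bar{c}\kappa^{1/4}$ then depend only on $\min_j \tilde{\ell}_j$ and $\max_j \tilde{\ell}_j$, hence only on $b$.

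\textbf{Main obstacle.} The delicate point is making the semigroup decomposition quantitative and uniform in $\kappa$: the operators $A_k$ are strongly non-normal, so one cannot invoke the spectral theorem, and the standard contour-integral argument for the remainder $R(t)$ requires resolvent bounds on a vertical line separating the top $2K_0$ eigenvalues from the rest of the spectrum. These should be extracted from the same pointwise resolvent kernel estimates used in Theorem~\ref{thm:1} (choosing the parameter $q$ in Theorem~\ref{thm:eigenfunctions} large enough to rule out rogue spectrum below the window). One must also control $R(t)$ in $\dot H^1$ — most cleanly by combining the $L^2$ semigroup bound with a short-time parabolic smoothing estimate — to ensure the remainder does not corrupt the denominator in $\ell(t)$.
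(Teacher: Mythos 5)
Your overall strategy is the same as the paper's: Fourier decompose, argue that the $|k|=1$ modes dominate, project onto the slowest eigenmodes furnished by Theorem~\ref{thm:eigenfunctions}, bound the remainder by the spectral gap (the paper invokes the spectral mapping theorem for $L_\kappa|_{\ker P_{\rm slow}}$ rather than an explicit contour integral, but these are equivalent in this setting), and read off $\ell(t)$ from the norms of the stretched Hermite eigenfunctions. Your scaling count $\|\phi_0^{(j)}\|_{L^2}\sim\kappa^{1/8}$, $\|\p_y\phi_0^{(j)}\|_{L^2}\sim\kappa^{-1/8}$, and your observation that parabolic smoothing is needed to upgrade the remainder bound to $\dot H^1$, both match the paper.

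There is one genuine sign error in how you identify $K_0$. You set $K_0$ to be the number of critical points realizing $\min_j\tilde\tau_j$, ``equivalently $\max_j|b''(\gamma_j)|$.'' This is backwards: the slowest-decaying eigenvalues (largest $\Re\lambda$) come from the critical points where the shear curvature is \emph{weakest}, i.e.\ $|b''(\gamma_j)|$ is \emph{minimal}. Concretely, the rescaling around $\gamma_j$ gives $\lambda_0^{(j)}+ib(\gamma_j)\approx \tilde\tau_j^{-1}\varepsilon^{1/2}\Lambda_0^{\pm}$ with $\Re\Lambda_0^\pm=-1/\sqrt2$, so $\Re\lambda_0^{(j)}\approx-\tilde\tau_j^{-1}\varepsilon^{1/2}/\sqrt2 = -(|b''(\gamma_j)|/2)^{1/2}\varepsilon^{1/2}/\sqrt2$, which is largest when $|b''(\gamma_j)|$ is smallest. (The displayed asymptotics~\eqref{eq:lambdascaling} and~\eqref{eq:evalremainderass} appear to contain a typo, with $\tilde\tau_j$ where $\tilde\tau_j^{-1}$ belongs, and it looks as though you inherited that sign; the prose in the paper after the corollary and the definition of $\tau_0$ in Section~\ref{sec:asymptoticdescription} both give the correct orientation.) Correcting this does not change the structure of your argument, only the index set, and the rest goes through.

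A smaller point: you discard the eigenfunction cross-terms $\langle\phi_0^{(j)},\phi_0^{(j')}\rangle$ by appealing to Gaussian decay, but the stated remainder in~\eqref{evalexp} is only $O(\varepsilon^{3/8})$ in $L^2$, which by itself gives overlaps $O(\varepsilon^{1/2})$, not $O(\varepsilon^\infty)$. To get the clean orthogonality the paper actually uses the inner--outer gluing decomposition $\phi_k=\phi_{k,I}\chi_{k,I}+\phi_{k,O}\chi_{k,O}$ with $\|\phi_{k,O}\|_{L^2}=O(\varepsilon^\infty)$, which is extra structure from the construction beyond what Theorem~\ref{thm:eigenfunctions} alone advertises. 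That said, $O(\varepsilon^{1/2})$ cross-terms would still be negligible compared to the $\varepsilon^{1/4}$ diagonal terms, so your conclusion survives even with the weaker input.
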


Specifically, $E_\kappa$ is the subspace of functions whose projection onto the slowest eigenmodes is zero; its dimension is bounded above by $K_0$, the number of critical points $\gamma_j$ with minimal $|b''(\gamma_j)|$. The wave speeds are $- \Im \lambda_0^{(j)}(\varepsilon)$ for certain $j$, and the waveforms are $\Re e^{ix} \phi_1^{(j)}$. We can further describe the spectral projections onto these waveforms. All of this is proven in Section~\ref{sec:asymptoticdescription}.

\begin{figure}
\centering
\begin{minipage}{.5\textwidth}
  \centering
  \includegraphics[width=\linewidth]{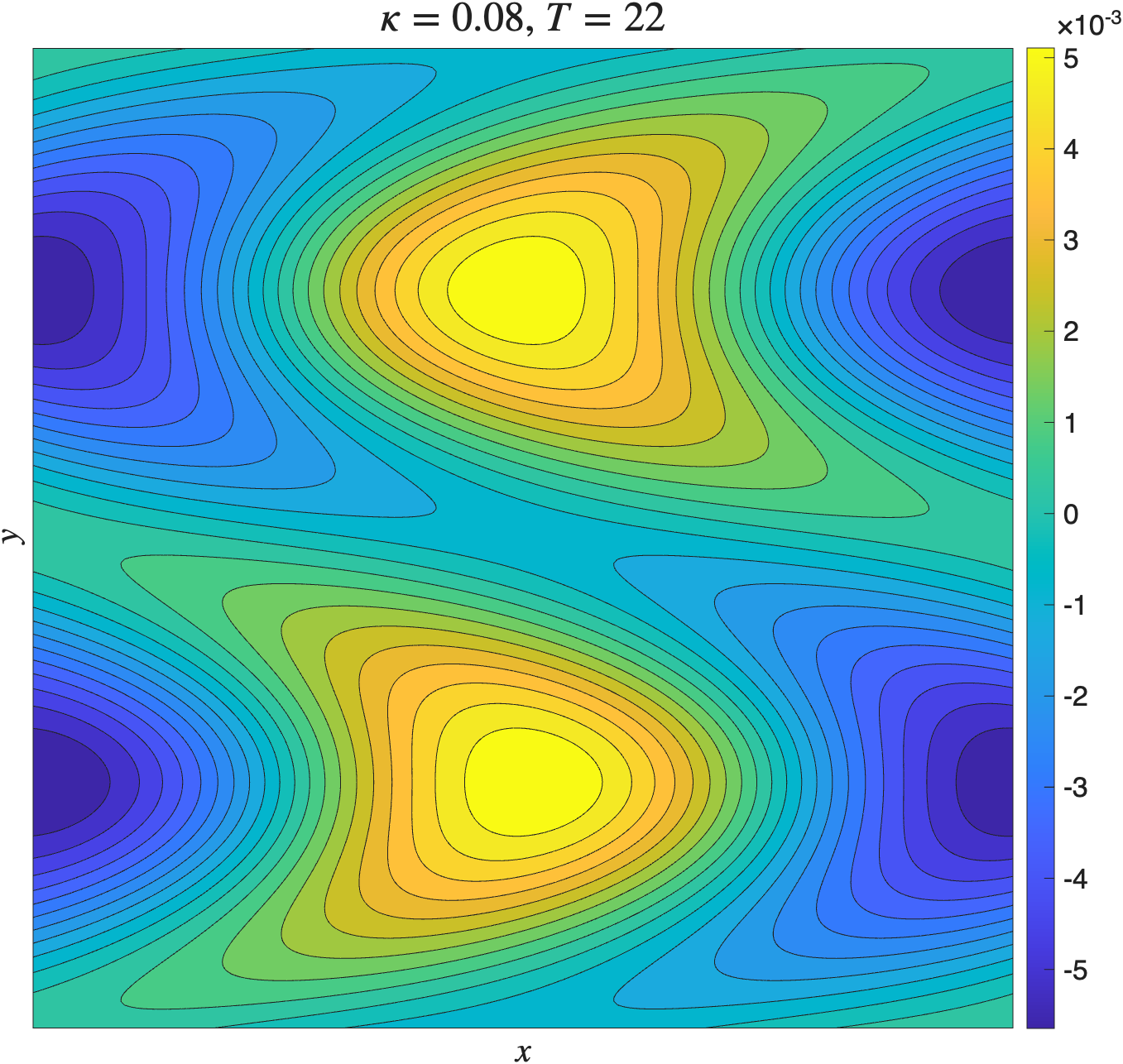}
\end{minipage}%
\begin{minipage}{.5\textwidth}
  \centering
  \includegraphics[width=\linewidth]{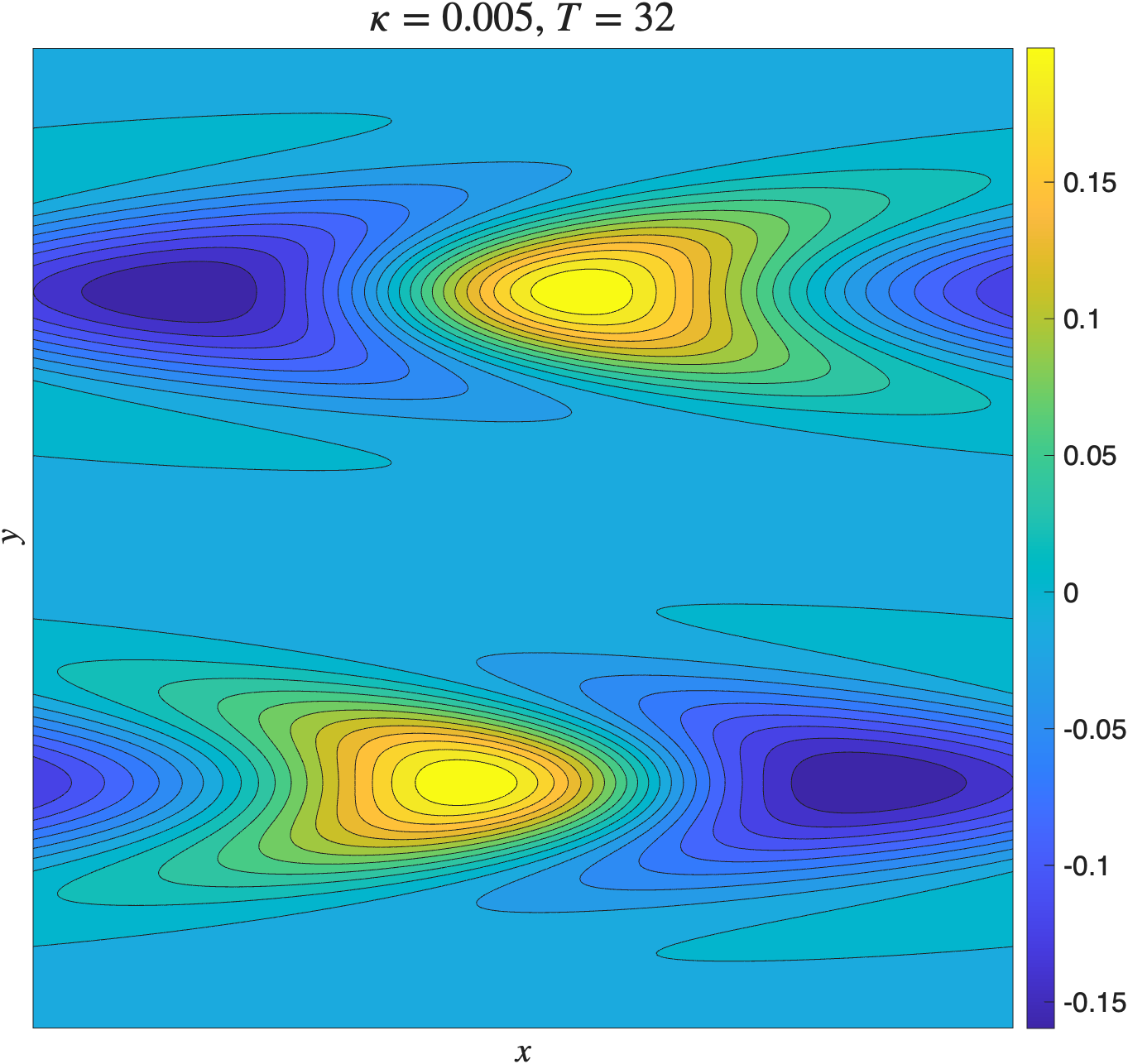}
\end{minipage}
\caption{Heat plots at time $T$ of a passive scalar with initial datum $f^{\rm in} = \exp(-|x-\pi|^2) - \langle \exp(-|x-\pi|^2) \rangle$ evolving in a sinusoidal shear flow $b(y) = \sin y$. Left: Diffusivity $\kappa=0.08$, $T=22$. 
Right: Diffusivity $\kappa=0.005$, $T=32$.
Compare~\cite{Camassa2007,Camassa2010}.}
\label{fig:traveling}
\end{figure}

\begin{remark}
The quantity $\ell(t)$ in~\eqref{eq:elltdef} is a length scale associated with the solution, and~\eqref{eq:conclusionofelltdef} says that this length scale is asymptotically $\approx \kappa^{1/4}$. 
It is also common~\cite{Miles2018} to measure the quantity
\begin{equation}
    \bar{\ell}(t) := \frac{\| f(t) \|_{\dot H^{-1}}}{\| f(t) \|_{L^2}} \, .
\end{equation}
We demonstrate that $\limsup_{t \to +\infty} \bar{\ell}(t) \approx \kappa^{1/8}$, in the sense of upper and lower bounds. However,~\eqref{eq:conclusionofelltdef} remains true for quantities of the type
\begin{equation}
    \bar{\ell}_s(t) := (\| f(t) \|_{\dot H^{-s}}/\| f(t) \|_{L^2})^{1/s} \, , \quad s \in (0,1/2) \, .
\end{equation}
 It also remains true when the full $H^1$ norm appears in the denominator of~\eqref{eq:elltdef}. We highlight the distinction between these quantities more carefully in Remark~\ref{rmk:projectivized} and in the proof. Notably, the quantity $\ell(t)$ in~\eqref{eq:elltdef} also appears in the recent work~\cite{hairer2024lower} on the Batchelor scale conjecture. Finally, in the shear setting, the true ``length scale" of the solution is the box size if you do not remove the means on streamlines $\langle f \rangle(y)$, which satisfy the heat equation $\p_t \langle f \rangle = \kappa \p_y^2 \langle f \rangle$.
\end{remark}

We discuss future questions relating to Theorem~\ref{thm:eigenfunctions} in Section~\ref{sec:futurequestions}.

\subsection{Comparison with existing literature}
In the monotone case (formally $N = 0$), \cite{CZ19} proved uniform-in-$\kappa$ mixing analogous to Theorem~\ref{thm:1} via a combination of the vector field method and hypocoercivity. We originally learned of question \textbf{(Q)} from~\cite{CZ19}, which partly served as the inspiration for the present work.

It is worthwhile to review the vector field method, as it gives the previous best results. The vector field method involves commuting $X := t b'(y)\p_x + \p_y $ through the equation\footnote{The ``good derivative" $D_\lambda$ in~\eqref{thed} may be regarded as a (truncated) resolvent analogue of this vector field.} and estimating $\| f \|_{H^{-1}}$ by $f$, $Xf$, and $\p_x f$. For the Couette flow, $X$ further commutes with the diffusion $\p_y^2$, and in the monotone case, the commutator terms can still be estimated. In~\cite{CotiZelati2023, DCZGV2024}, the authors designed a vector field capable of handling non-degenerate critical points\footnote{The authors' specific application was to a kinetic model of microswimmers in which the ``free streaming" term is $\p_t + p \cdot \nabla_x$, $p \in S^2$ -- see also~\cite{AlbrittonOhmStabilizing}. The phase function $p_3$ has critical points at the north and south poles.} and which has good commutativity with respect to the Laplacian. With this method, they proved mixing rates up to the enhanced dissipation time, but not globally-in-time. Therein, a sticking point in how the vector field is used to control the $H^{-1}$ norm led to a logarithmic loss, see~\cite[Section~4.3]{CotiZelati2023}. It would be interesting to further refine this method.



Closely related to the evolution of a passive scalar~\eqref{adv:diff} is that of the two dimensional Navier-Stokes equation written in terms of the vorticity linearized around a shear flow. In the case of Couette flow, the two equations are, in fact, identical. For a general shear flow $(b(y),0)$, the linearized equations for the vorticity $\omega(t, x ,y )$ take the form\footnote{$(b(y),0)$ is a solution of the Navier-Stokes equations with force $(-\kappa b''(y), 0)$.} 
\begin{subequations}
\label{lin:nse}
    \begin{align}
   \p_t \omega + b(y)\p_x \omega - b''(y)\p_x \psi &= \kappa \Delta \omega, \\
    \Delta \psi &= \omega.
\end{align}
\end{subequations}
The nonlocal term $b''(y)\p_x \psi $ is a significant complicating factor for the analysis of \eqref{lin:nse}. For the domain $\T \times [0,1 ]$ with $\kappa = 0$ and spectrally stable and monotone $b$, optimal mixing rates were first proved in \cite{WZZ18}.\footnote{In the context of the linearized equations, mixing is usually referred to as \emph{inviscid damping}. } See also \cite{J20, J20A} for alternative proofs. For the domains $\T \times [0,1]$ and $\T \times \R$, uniform-in-diffusivity mixing for spectrally stable monotone shear flows  was proved  in \cite{CWZ23} and \cite{J23} respectively. See also \cite{GNRS20} where uniform estimates are proved for monotone spectrally stable flows up to time $\sim \kappa^{-1}$.

For $y \in \T$, smooth shears necessarily contain critical points. While the proofs are very different, the behavior one can prove in the passive monotone case and the linearized spectrally stable monotone case are similar. However, for critical points, there is a marked difference between the behavior of the passive evolution and the linearized evolution. The key new effect in the linearized equations is known as \emph{vorticity depletion} and leads to vanishing of the solution at the critical points which in turn leads to improved mixing rates for the linearized equation relative to the passive equation. In the inviscid case, this was first observed numerically in \cite{BM10} and later proved in \cite{WZZ19, WZZ20}. A more refined pointwise description of the behavior near the critical points was later given in \cite{IIJ24} and \cite{BG24}. In the case with viscosity, vorticity depletion was proved in \cite{BCJ24}. Many aspects of the proof of Theorem \ref{thm:1} are inspired by the techniques developed in \cite{J23, BCJ24}. We highlight some key technical distinctions in Section \ref{sec:compare}.

Besides mixing, enhanced dissipation also plays a key role in the proof of Theorem \ref{thm:1}. The relationship between enhanced dissipation and mixing was explored in the pioneering work \cite{CKRZ08}. A more quantitative version of the argument from \cite{CKRZ08} was later developed in \cite{CZDE20}. See also \cite{Wei21} which provides a simpler proof of \cite{CKRZ08} via a Gearhardt-Pr\"uss type theorem. For the case of general shears, the number of enhanced dissipation results is now vast. Making no attempt to be exhaustive, see \cite{BCZ16, ALBRITTON2022, CZG23, GLM24, V24} and references therein. 

Finally, we mention a selection of the many important contributions from the applied literature. Shear-diffusion in the context of the Couette flow goes back to Kelvin~\cite{kelvin1887stability}. In the context of passive scalars, much attention has been devoted to Taylor dispersion~\cite{taylor1954dispersion}. Notably, Taylor dispersion is a \emph{low frequency effect}, valid on $O(\kappa^{-1})$ timescales, which should not be conflated with the intermediate frequency behavior associated to shear-diffusion which we investigate here. The papers~\cite{Camassa2007,Camassa2010} investigated behavior on intermediate timescales, akin to what is done in this paper. Notably, our Theorem~\ref{thm:eigenfunctions} was predicted by formal WKB analysis therein. A theory for passive scalars advected by vortices is contained in~\cite{BAJER_BASSOM_GILBERT_2001}. Other relevant papers include~\cite{GIONA2004,VANNESTE2007}.




\subsection{Dimensional analysis}
As a guiding principle, we describe certain important length and time scales which, in particular, inform the change of variables~\eqref{eq:innervariables1}-\eqref{eq:innervariables3} and the analysis in Section~\ref{sec:resolve}. Consider
\begin{equation}
\p_t f + ik b(y) f = \kappa \p_y^2 f \, .
\end{equation}
It will be useful to distinguish between $x$- and $y$-lengths.
Then $[t] = T$, $[k] = 1/L_X$, $[y] = L_Y$, $[b] = L_X/T$, and $[\kappa] = L^2_Y/T$. The derivative $b'$, with dimensions $[b'] = L_X/L_Y T$, will play a more important role than $b$ itself. Where $b'(y) \neq 0$, the \emph{local enhanced dissipation time} is
\begin{equation}
    T_{\rm loc}(y) = (k |b'(y)| \kappa^{1/2})^{-2/3} = (k|b'(y)|)^{-2/3} \kappa^{-1/3} \, ,
\end{equation}
and the associated length scale is
\begin{equation}
    L_{\rm loc}(y) = ( \kappa T_{\rm loc}(y) )^{1/2} = (k|b'(y)|)^{-1/3} \kappa^{1/3}, \, 
\end{equation}
see Remark \ref{phylength}. At a critical point $\gamma$ of order $N$, nearby which the local behavior is $b(y) \approx b(\gamma) + \frac{1}{(N+1)!} b^{(N+1)}(\gamma) (y-\gamma)^{N+1}$, the local enhanced dissipation time is instead
\begin{equation}
    T_{\rm loc}(\gamma) = ( k b^{(N+1)}(\gamma) \kappa^{\frac{N+1}{2}} )^{-\frac{2}{N+3}} =(k b^{(N+1)}(\gamma) )^{-\frac{2}{N+3}} \kappa^{-\frac{N+1}{N+3}} \, .
\end{equation}
The associated length scale is
\begin{equation}
    L_{\rm loc}(\gamma) = (k b^{(N+1)}(\gamma) )^{-\frac{1}{N+3}} \kappa^{\frac{1}{N+3}} \, .
\end{equation}
Notably, when $|y - \gamma| \lesssim L_{\rm loc}(\gamma)$, we have $T_{\rm loc}(y) \gtrsim T_{\rm loc}(\gamma)$ and $L_{\rm loc}(y) \gtrsim L_{\rm loc}(\gamma)$; in that whole region, the length and time scales associated with the critical point $\gamma$ are the most relevant. This is visible in the definition of $|B'(y)|$ in~\eqref{B} below.
\section{Mixing via resolvents}
\label{sec:resolve}

We consider $f^{\rm in} \in H^2(\T)$ to justify the representation formula~\eqref{rep}. This will only be used as a qualitative assumption, and none of the bounds we prove will depend on the $H^2(\T)$ norm. For general $H^1(\T)$ initial data, Theorem \ref{thm:1} will follow by a standard approximation argument.

Taking the Fourier transform in $x$ of \eqref{adv:diff} and recalling Remark~\ref{rem:k=1}  leads us to consider
\begin{equation}
\label{eq:passive:k}
    \p_t f_1 + ib(y)f_1 - \varepsilon (\partial_y^2 - 1) f_1 = 0, \quad 
       f_1(0,y ) =  f_1^{\rm in}(y),
\end{equation}
We can rewrite \eqref{eq:passive:k} as 
\begin{align}
\label{reduced}
    \p_t f_* =  L_{\varepsilon} f_*,
\end{align}
where  $L_\varepsilon$ is defined as in \eqref{eq:Lvarepsilondef},
and we have set
\begin{align}
    f_* (t,y) := f_1(t,y)e^{\varepsilon t}, \quad f_{*}^{\rm in}(y) := f_1^{\rm in}(y).
\end{align}
The following representation formula holds for $f_*$:\footnote{This is guaranteed to hold in the principal value sense by the Laplace inversion formula in, e.g.,~\cite[Chapter III, Corollary 5.15]{engel2000one} whenever $f^{\rm in}_*$ belongs to the domain of $L_\varepsilon$, namely, $H^2(\T)$, and $s(L_\varepsilon) := \sup \Re {\rm spec}(L_\varepsilon) < - \sigma_0 \varepsilon^{\frac{N+1}{N+3}}$, which holds for sufficiently small $\sigma_0$ and which we also prove. Recall that the growth bound of the semigroup $e^{tL_\varepsilon}$ and the spectral bound $s(L_\varepsilon)$ are equal in this setting, see~\cite[Chapter IV, Corollary 3.11]{engel2000one}.} 
\begin{align}
\label{rep}
    f_*(t,y) = \frac{e^{ -\sigma_0 \varepsilon^{\frac{N+1}{N+3}}  t}}{2 \pi}\int_{\R} e^{-i \lambda t  } \left[(-i \lambda  -\sigma_0 \varepsilon^{\frac{N+1}{N+3}}   - L_{\varepsilon})^{-1} f_*^{\rm in} \right](y) d \lambda,
 \end{align}
for $\sigma_0 \in (0,1)$  which will be fixed as part of Proposition~\ref{k:bdds} (see Appendix~\ref{sec:airy}).
Slightly abusing notation, we define
\begin{align}
\label{sdf}
    f(y, \lambda) := \left[(-i \lambda  -\sigma_0 \varepsilon^{\frac{N+1}{N+3}}- L_{\varepsilon})^{-1} f_*^{\rm in} \right](y).
\end{align}
It follows from \eqref{sdf} that 
\begin{align}
\label{reso}
    -\varepsilon\p_y^2 f(y, \lambda)   -\sigma_0 \varepsilon^{\frac{N+1}{N+3}}f(y, \lambda) + i(b(y) - \lambda)f(y, \lambda) = f_*^{\rm in}(y).
\end{align}
We denote the Airy-type operator on the left-hand side of \eqref{reso} as
\begin{align}
    \label{eq:Airy}
    \A g(y) := -\varepsilon\p_y^2 g(y)  -\sigma_0 \varepsilon^{\frac{N+1}{N+3}}g(y ) + i(b(y) - \lambda)g(y).
\end{align}
In the inviscid case $\varepsilon = 0$, $\A$ is simply multiplication by $i(b(y) - \lambda)$ with inverse $-i(b(y) - \lambda)^{-1}$. For $\varepsilon > 0$,  $\ai$ is no longer explicit and the kernel is highly oscillatory. A key insight of \cite{J23} and \cite{BCJ24} for the case of monotone shears  and shears with non-degenerate critical points  respectively, was to clarify the sense in which 
\begin{align}
    \ai (g)(y) \approx \frac{g(y)}{i(b(y) -\lambda)  - r(\varepsilon)},  
\end{align}
where $r(\varepsilon)$ is a regularization scale determined by the interaction between the diffusion $\varepsilon \p_y^2$ and the potential $i(b(y) -\lambda)$. We will necessarily have to use and extend these ideas to the case of critical points of arbitrary finite order in the current setting.

\subsection{Airy bounds}
We first state the fundamental solution bounds for the Airy kernel which are of fundamental importance to the proof of Theorem \ref{thm:1}.

\begin{proposition}
\label{k:bdds}
   There exist $\varepsilon_0, c_0, \sigma_0 >0 $ such that if  $K_\varepsilon(y,z, \lambda): \T^2 \times \R \ \to \C$ solves 
    \begin{align}
    \label{fs0}
        -\varepsilon \p_y^2 K + \alpha K - \sigma_0 \varepsilon^{\frac{N+1}{N+3}} K + i(b(y) - \lambda) K = \delta(y-z)
    \end{align}
    with $\alpha \geq 0$, then for $\varepsilon \in (0, \varepsilon_0)$, the following bounds hold:
    \begin{align}
    \label{fs1}
    |K(z, z, \lambda)| \lesssim A(z) :=   \frac{\varepsilon^{-1/2}}{( \alpha + \varepsilon^{1/3}|B'(z)|^{2/3} + |b(z) - \lambda|  )^{1/2}},
\end{align}
\begin{align}
    \label{fs2}
     |K(y,z, \lambda)| \lesssim A(z) e^{-c_0 \frac{|y-z|}{L(y,z, \lambda)} }, 
\end{align}
where 
\begin{align}
\label{fs3}
    \frac{1}{L(y,z, \lambda)} := 
    \begin{cases}
        \frac{( \alpha + \varepsilon^{1/3}|B'(y)|^{2/3}  + \varepsilon^{1/3}|B'(z)|^{2/3}  + |b(y) - \lambda| + |b(z) - \lambda| )^{1/2}}{\varepsilon^{1/2}} & \quad |y-z| < \sigma_\sharp\\
        \frac{(\alpha  +\sigma_\sharp)^{1/2}}{\varepsilon^{1/2}} & \text{ otherwise }
    \end{cases}
\end{align}
and 
\begin{align}
\label{B}
    |B'(y)| := 
    \begin{cases}
   2\sigma_0\left(|y-\gamma_j|^m + \varepsilon^{\frac{m}{m + 3}} \right) & \quad |y - \gamma_j | \leq \sigma_\sharp,\gamma_j\text{ is a critical point of order m} \\
     2\sigma_0|b'(y)| & \quad |y - \gamma_j| \geq 2 \sigma_\sharp \text{ for any critical point } \gamma_j \\
    \text{ linear interpolation }  &\quad \text{ otherwise. } 
    \end{cases}
\end{align}
where $\sigma_\sharp$ is as in Assumption \ref{assump:b}, and
\begin{align}
    \label{fs4}
     |\p_yK(y,z, \lambda)| \lesssim \varepsilon^{-1} e^{-c_0 \frac{|y-z|}{L(y,z, \lambda)} } .
\end{align}
\end{proposition}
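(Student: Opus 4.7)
My plan is to establish Proposition~\ref{k:bdds} via Agmon-type weighted $L^2$ energy estimates (for the off-diagonal exponential decay) combined with local Airy-function analysis (for the on-diagonal singularity). The underlying heuristic is that $\A$ behaves locally like the frozen-coefficient operator $-\varepsilon \p_y^2 + p_0$ with $p_0 := \alpha - \sigma_0 \varepsilon^{(N+1)/(N+3)} + i(b(z) - \lambda)$, whose Green's function is the explicit exponential $(2(\varepsilon p_0)^{1/2})^{-1}\exp(-(p_0/\varepsilon)^{1/2}|y-z|)$. This predicts simultaneously the on-diagonal amplitude $A(z) \sim \varepsilon^{-1/2}|p_0|^{-1/2}$ and the off-diagonal decay rate $L^{-1} \sim (|p_0|/\varepsilon)^{1/2}$. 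The regularization $|B'|$ in~\eqref{B} is designed precisely to ensure $|p_0| \ges \varepsilon^{(N+1)/(N+3)}$ near critical points, matching the enhanced dissipation scale and keeping this heuristic uniformly valid in $y$ and $\lambda$.

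For the off-diagonal bound~\eqref{fs2}, I would multiply~\eqref{fs0} by $\overline{K} e^{2\psi}$ for a real Lipschitz weight $\psi$ with $|\psi'(y)| \leq c_0/L(y,z,\lambda)$ and integrate over $\T$. Combining the real and imaginary parts of the resulting identity with a Heisenberg-type commutator lower bound that manufactures the term $\|(\varepsilon^{1/3}|B'|^{2/3})^{1/2} e^\psi K\|_{L^2}^2$ (in the spirit of \cite{J23,BCJ24}), and absorbing the bad commutator contribution $\varepsilon(\psi')^2 e^{2\psi}|K|^2$ via the smallness of $c_0$, yields
\begin{equation*}
\varepsilon \|e^\psi K'\|_{L^2}^2 + \|(\alpha + \varepsilon^{1/3}|B'|^{2/3} + |b-\lambda|)^{1/2} e^\psi K\|_{L^2}^2 \les e^{2\psi(z)}|K(z,z,\lambda)|.
\end{equation*}
Taking $\psi(y) = c_0 \int_z^y L^{-1}(s,z,\lambda)\,ds$ encodes the exponential decay, and a rescaled Sobolev embedding converts this to the pointwise bound~\eqref{fs2}. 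The on-diagonal input~\eqref{fs1} is obtained by local construction of an approximate Green's function: away from critical points, linearizing $b$ around $z$ and rescaling by $(\varepsilon/|b'(z)|)^{1/3}$ reduces to the classical complex Airy equation with diagonal size $\sim \varepsilon^{-1/3}|b'(z)|^{-1/3}$; near a critical point of order $m \leq N$, the rescaling $Y = (y-\gamma_j)\varepsilon^{-1/(m+3)}$ yields the higher-order Airy equation $(-\p_Y^2 + iY^{m+1})K = \delta$, whose fundamental solution is constructed via a contour-integral representation and analyzed by steepest descent, producing diagonal size $\sim \varepsilon^{-(m+2)/(m+3)}$. The derivative bound~\eqref{fs4} then follows by the same scheme applied to $\p_y K$, with the extra $\varepsilon^{-1}$ coming from the second-derivative structure of $\A$.

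The main technical obstacle is the higher-order Airy analysis near critical points of order $m \geq 2$: the fundamental solution is no longer the classical Airy function, and one must develop a higher-order analogue with both the correct on-diagonal amplitude and exponential decay at rate $\sim \varepsilon^{-1/2}(\varepsilon^{1/3}|B'|^{2/3})^{1/2}$, with constants uniform in $m \leq N$. A secondary difficulty is matching the Agmon weight $\psi$ smoothly between the Airy regime (small $|y-z|$ near critical points, dominated by $\varepsilon^{1/3}|B'|^{2/3}$) and the WKB regime (larger $|y-z|$, dominated by $|b(y)-\lambda|$); this is handled through the linear interpolation in the definition of $|B'|$ in~\eqref{B}, which guarantees Lipschitz continuity of the resulting global weight across transition regions.
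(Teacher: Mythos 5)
Your off-diagonal strategy — Agmon-weighted energy estimates, with a commutator lower bound supplying the effective-potential term $\varepsilon^{1/3}|B'|^{2/3}$, and absorbing $\varepsilon(\psi')^2 e^{2\psi}|K|^2$ via smallness of $c_0$ — is morally the same as the paper's, which achieves it through a ``spectral gap'' inequality (the Heisenberg-type step you describe) plus an ``entanglement inequality'' tested against a carefully built Agmon-type weight $\varphi$ vanishing at $z$. However, your on-diagonal route diverges substantially. The paper deliberately \emph{avoids} constructing explicit (higher-order) Airy Green's functions. Instead it derives $L^2$ control $\|\varepsilon^{1/6}|B'|^{1/3}K\|^2 + \|\varepsilon^{1/2}\p_y K\|^2 + \||b-\lambda|K\|^2 \les |K(z,z,\lambda)|$ by testing the equation against $\bar K$ and $\phi_j\bar K$ (an approximate sign of $b-\lambda$), then upgrades to $L^\infty$ via the elementary interpolation $\|f\|_{L^\infty(I)}^2\les \|f\|_{L^2(I)}^2|I|^{-1}+\|\p_y f\|_{L^2(I)}^2|I|$ with $|I|$ optimized. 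Because this is soft, it handles all orders $m\le N$ uniformly without needing any explicit analysis of $-\p_Y^2 + iY^{m+1}$; what you flag as ``the main technical obstacle'' is precisely what the paper's effective-potential formulation is designed to sidestep. Your route is plausible in principle, but it buys you a much harder contour-integral/steepest-descent analysis, and the plan as written has two slips worth fixing: (i) the classical Airy diagonal size should be $\varepsilon^{-2/3}|b'(z)|^{-1/3}$, not $\varepsilon^{-1/3}|b'(z)|^{-1/3}$ (check it against $A(z)$ with $\alpha=0$, $|b(z)-\lambda|\les\varepsilon^{1/3}|B'(z)|^{2/3}$); and (ii) the reduction to an Airy model is only the correct local picture when $b(z)\approx\lambda$ — for $|b(z)-\lambda|\gg\varepsilon^{1/3}|B'(z)|^{2/3}$ the frozen operator $-\varepsilon\p_y^2+i(b(z)-\lambda)$ has an exponential, not Airy, Green's function, and your local analysis section quietly drops this case even though your opening heuristic correctly includes it.
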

We also introduce the notation 
\begin{align}
    L(x):=  \frac{\varepsilon^{1/2}}{ (\alpha + \varepsilon^{1/3}|B'(x)|^{2/3} + |b(x) -\lambda| )^{1/2 }   }.
\end{align}
Using this, the fundamental solution bounds in \eqref{fs1} and \eqref{fs2} can be reformulated as 
\begin{align}
    \label{fsalt}
    |K(y,z, \lambda)| \lesssim \varepsilon^{-1} L(z)  e^{-c_0 \frac{|y-z|}{L(y,z, \lambda)} } .
\end{align}
\begin{remark}
    For all estimates in this section, we take $\alpha = 0$.  
\end{remark}
\begin{remark}
    $|B'(y)|$ can be thought of as a regularized version of $|b'(y)|$ that is always positive as opposed to merely nonnegative at the critical points. Away from critical points, $\varepsilon^{1/3} |b'(y)|^{2/3} \approx \varepsilon^{1/3}|B'(y)|^{2/3}$. However, near a critical point $\gamma_j$ of order $m$ (specifically when $|y - \gamma_j| \lesssim \varepsilon^{\frac{1}{m +3}}$), we have 
    $\varepsilon^{1/3}|B'(y)|^{2/3} \approx \varepsilon^{\frac{m+1}{m+3}}$. 
\end{remark}
\begin{remark}
\label{phylength}
    Points where $b(y) = \lambda$ are sometimes referred to as ``critical layers" in the literature (see, for example, \cite{DR04}), a convention we adopt in this paper. For $\alpha = 0$, away from the critical layers, $K(y,z, \lambda)$ varies on the short length scale $\varepsilon^{1/2}$. When $y \approx z$ and $b(y) = \lambda$, $K(y,z, \lambda)$ varies on the longer length scale $\frac{\varepsilon^{1/3}}{|B'(y)|^{1/3}}$. Note that at a critical point of order $m$, this length scale is of the order $\varepsilon^{\frac{1}{m + 3}}$.  It is sometimes helpful to keep in mind that $\frac{\varepsilon^{1/3}}{|B'(y)|^{1/3}}$ is the ``physical" length scale that is actually observed in the solution near the point $y$. 
\end{remark}
\begin{remark}
    Regarding the piecewise definition of $L(y,z, \lambda)$, this accounts for the case where there are distinct critical layers $z_*$ and $y_*$ satisfying $b(z_*) = b(y_*) = \lambda$ which are \emph{not} close to the same critical point.  The length scale at $y_*$ is not given by $\frac{\varepsilon^{1/3}}{|B'(y)|^{1/3}}$ but rather a shorter length scale comparable to $\varepsilon^{1/2}$.  
    See Lemma \ref{lemlow} for further details on this point.
\end{remark}

The proof of Proposition~\ref{k:bdds} is given in Appendix~\ref{sec:airy}. Here we give a heuristic justification of why the bounds take their particular form. As a first observation, we seek to carefully quantify the $L^2$ ``spectral gap" coming from the interaction between the potential $i(b(y) - \lambda)$ and the dissipation $\varepsilon \p_y^2$. Away from critical layers the potential is coercive on $L^2$. However, near critical layers, one needs to use the dissipation to ensure $L^2$ coercivity, see Lemma \ref{spec:gap}. The upshot of this is that there is an ``effective potential" term $\varepsilon^{1/3}|B'(y)|^{2/3}$ which can be included in \eqref{fs0} to get 
\begin{align}
\label{effective}
  -\varepsilon \p_y^2 K +  \varepsilon^{1/3}|B'(y)|^{2/3}K + i(b(y) - \lambda) K = \delta(y-z).
\end{align}
Note that, for all $y$ and $\varepsilon$, we have $\varepsilon^{1/3}|B'(y)|^{2/3} > \sigma_0 \varepsilon^{\frac{N+1}{N+3}}$. Dividing \eqref{effective} by $\varepsilon$ and viewing the coefficients as being fixed at $z$, we would expect that 
\begin{align}
    |K(y,z,\lambda)| &\lesssim L(z)\varepsilon^{-1} e^{-c_0 \frac{|y-z|}{L(z)}},\\
    L(z) &\approx \frac{\varepsilon^{1/2}}{(|b(z) - \lambda|  + \varepsilon^{1/3} |B'(z)|^{2/3}   )^{1/2}}. 
\end{align}
Because $K(y,z,\lambda)$ is a fundamental solution,\footnote{$\overline{K(z,y,-\lambda)}$ is a fundamental solution for the problem with $b$ replaced by $-b$} the bounds should be symmetric with respect to $y,z$, which leads us to consider the symmetric version of the length scale $L(y,z,\lambda)$ in \eqref{fs3}. To ensure that the estimates are truly symmetric in $y,z$, $A$ from \eqref{fs1} should also depend on $y$. This can be done, but it turns out to be more convenient to express the bounds in a slightly asymmetric form where the role of $z$ as the location of the Dirac mass plays a distinguished role. The justification for viewing the operator as constant coefficient is that the fundamental solution varies on a much shorter length scale than the potential. Therefore, from the perspective of the fundamental solution, the coefficients are ``essentially constant." A similar observation plays an important role in WKB expansions, see~\cite[Chapter 7]{Miller06}.

As a consequence of the Proposition \ref{k:bdds} we have the following bounds for solutions of $\A h = g$. 
\begin{proposition}
\label{ai:bdds}
Let $g \in H^1(\T)$ and assume that $\norm{g}_{H^1(\T)} = 1$. We have the following bounds for $\ai(g)$:
\begin{subequations}
    \begin{align}
    |\ai(g)(y)| &\lesssim  \frac{1}{|b(y) - \lambda| + \varepsilon^{1/3}|B'(y)|^{2/3}} \label{g1} \\
    |\p_y\ai(g)(y)| &\lesssim \frac{|B'(y)|}{(|b(y) - \lambda| + \varepsilon^{1/3}|B'(y)|^{2/3})^2} + \frac{\varepsilon^{-1/4}}{(|b(y) - \lambda|  + \varepsilon^{1/3}|B'(y)|^{2/3}  )^{3/4}} \label{g2} \\
    |\p_y^2\ai(g)(y)| &\lesssim \frac{|B'(y)|\varepsilon^{-1/2}}{(|b(y) - \lambda| + \varepsilon^{1/3}|B'(y)|^{2/3})^{3/2}} + \frac{\varepsilon^{-3/4}}{(|b(y) -\lambda| + \varepsilon^{1/3}|B'(y)|^{2/3})^{1/4}  } \label{g3}
\end{align}
\end{subequations}
\end{proposition}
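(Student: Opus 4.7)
The plan is to represent $h := A_\varepsilon^{-1}(g)$ via the fundamental solution as $h(y) = \int_\T K(y,z,\lambda) g(z) \, dz$ and to reduce each of~\eqref{g1}--\eqref{g3} to a careful integration of the pointwise bounds of Proposition~\ref{k:bdds} against $g$ (and, for~\eqref{g2} and~\eqref{g3}, against $g'$). Write $D(y) := |b(y)-\lambda| + \varepsilon^{1/3}|B'(y)|^{2/3}$ for brevity, so the length scale is $L(y) = \varepsilon^{1/2}/D(y)^{1/2}$. The key fact underpinning every estimate is that on the scale $|y-z| \lesssim L(y,z,\lambda) \leq L(y)$ of the exponential decay in~\eqref{fsalt} and~\eqref{fs4}, the coefficients $|B'|$, $D$, and $L$ can be replaced by their values at $y$ up to universal constants. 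This localization reduces the needed kernel integrals to Gaussian moments on scale $L(y)$.

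Bound~\eqref{g1} follows at once from $\|g\|_{L^\infty} \lesssim \|g\|_{H^1} \leq 1$ combined with $\int_\T |K(y,z,\lambda)| \, dz \lesssim \varepsilon^{-1} L(y)^2 = 1/D(y)$, the latter obtained from~\eqref{fsalt} and the local approximation $L(z) \sim L(y)$. For~\eqref{g2}, differentiate $A_\varepsilon h = g$ in $y$ to obtain
\[
A_\varepsilon(h') = g' - ib'(y) h,\qquad h'(y) = \int_\T K(y,z,\lambda)\bigl(g'(z) - ib'(z) h(z)\bigr) \, dz.
\]
Cauchy--Schwarz against $g'$ gives $\|K(y,\cdot,\lambda)\|_{L^2_z} \|g'\|_{L^2} \lesssim \varepsilon^{-1} L(y)^{3/2} = \varepsilon^{-1/4}/D(y)^{3/4}$, the second term of~\eqref{g2}. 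The pointwise bound $|b'(z) h(z)| \lesssim |B'(y)|/D(y)$ (using $|b'| \lesssim |B'|$, the just-proven~\eqref{g1}, and the local approximations) together with $\int |K| \lesssim 1/D(y)$ produces the first term $|B'(y)|/D(y)^2$.

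For~\eqref{g3}, differentiate the kernel representation of $h'$ once more in $y$:
\[
h''(y) = \int_\T \partial_y K(y,z,\lambda)\bigl(g'(z) - ib'(z) h(z)\bigr) \, dz,
\]
and insert~\eqref{fs4}. Cauchy--Schwarz against $g'$ now produces $\|\partial_y K(y,\cdot,\lambda)\|_{L^2_z} \lesssim \varepsilon^{-1} L(y)^{1/2} = \varepsilon^{-3/4}/D(y)^{1/4}$, which is the second term of~\eqref{g3}. The $b'h$-term produces $|B'(y)|/D(y) \cdot \int |\partial_y K(y,z,\lambda)| \, dz \lesssim |B'(y)|/D(y) \cdot \varepsilon^{-1} L(y) = |B'(y)|\varepsilon^{-1/2}/D(y)^{3/2}$, the first term. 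Performing the derivative at the kernel level is precisely what lets us avoid the $g \in H^2$ regularity that differentiating the equation twice (via $A_\varepsilon(h'') = g'' - 2ib' h' - ib'' h$) would demand.

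The main technical obstacle is justifying the locally constant approximation of $|B'|$ and $D$ on the scale $L(y)$. Its heart is the inequality $|B'(y)| L(y) \lesssim D(y)$, an immediate consequence of the lower bound $D(y) \geq \varepsilon^{1/3}|B'(y)|^{2/3}$: this gives $|b(z)-b(y)| \lesssim |B'(y)| |y-z| \lesssim D(y)$, so $|b(z)-\lambda|$ stays within a constant of $D(y)$. The control of $|B'|$ itself requires a case analysis on the proximity of $y$ to a critical point $\gamma_j$ of order $m \leq N$ using the piecewise definition~\eqref{B}: when $|y-\gamma_j| \leq \varepsilon^{1/(m+3)}$, one has $L(y) \lesssim \varepsilon^{1/(m+3)}$ which forces $z$ into the plateau where $|B'| \sim \varepsilon^{m/(m+3)}$; when $|y-\gamma_j| \gtrsim \varepsilon^{1/(m+3)}$, one has $L(y) \lesssim |y-\gamma_j|$ which forces $z$ into the polynomial regime where $|B'| \sim |y-\gamma_j|^m$. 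The outer regime $|y-z| > \sigma_\sharp$ in~\eqref{fs3} contributes only exponentially small remainders and plays no further role.
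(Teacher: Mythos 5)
Your proof takes essentially the same route as the paper's: prove~\eqref{g1} directly from the kernel bound~\eqref{fs2} and the embedding $H^1(\T)\hookrightarrow L^\infty(\T)$, differentiate the resolvent equation once to obtain the Airy-type equation~\eqref{g4} for $\p_y h$ with right-hand side $-ib'(y)h + \p_y g$ and then apply the kernel bounds (Cauchy--Schwarz against $\p_y g$ for the second term, the $L^\infty$ bound~\eqref{g1} together with $|b'|\lesssim |B'|$ and $\int |K|\,dz$ for the first), and obtain~\eqref{g3} by differentiating the kernel representation of $\p_y h$ once more and invoking~\eqref{fs4}. Your extra care in justifying the locally constant approximation of $|B'|$ and $D$ on the scale $L(y)$, and your remark on why one differentiates the kernel rather than the equation a second time, are helpful elaborations but not departures from the paper's argument.
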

\begin{proof}
The bound \eqref{fs2} and the embedding $H^1(\T) \to L^\infty(\T)$ immediately imply~\eqref{g1}. For \eqref{g2} we set $h(y, \lambda) = \ai(g)$ and differentiate the equation to~get 
 \begin{align}
 \label{g4}
    -\varepsilon \p_y^2 \p_y h+ i (b(y) - \lambda) \p_y h= - ib'(y) h + \p_y g.  
 \end{align}
The first term on the right-hand side of \eqref{g2} is obtained by combining \eqref{g1} with \eqref{fs2} and using that $\left(\frac{L(z)}{L(y)}\right)^\theta e^{-c_0\frac{L(z)}{L(y)}} \lesssim 1, \theta \geq 0 $. The second term comes from  \eqref{fs2} combined with $\p_yg \in L^2$. We comment briefly on the appearance of $|B'(y)|$. Here we are using that the maximal length scale determined by $K(y,z)$ when $y$ is near a critical point of order $m$ is $ \varepsilon^{\frac{1}{m + 3}}$  which converts the \emph{vanishing} of $b'(y)$ at a critical point into the \emph{smallness} of $|B'(y)|$. Finally, \eqref{g3} follows from \eqref{g4} and the derivative kernel bounds \eqref{fs4}. 
\end{proof}

We introduce some useful notation for functions which frequently occur in the bounds for quantities involving the Airy kernel. Let $c_*, c_{**} \in (0,1)$ and $\ell \in (0,1 )$  satisfy $ \varepsilon^{\frac{1}{N+3}}\lesssim \ell \lesssim \sigma_\sharp$.   We define $E_{I, \ell}(y), \tilde{E}_{I,\ell}(y), E_{O,\ell}(y),  \tilde{E}_{O,\ell}(y): [-\pi, \pi] \to \R$  as follows:

\begin{align}
        E_{I,\ell}(y) := 
        \begin{cases}
           e^{-c_* \left(\frac{|B'(\ell)|^{1/3}}{\varepsilon^{1/3}} \right)|y - \ell|} & \quad y \in [0, \ell] \\
            1 & \quad y \in [\ell, \pi] \\
            E_{I,\ell}(-y) & \quad y \in [-\pi, 0]
        \end{cases}
    \end{align}
\begin{align}
        \tilde{E}_{I,\ell}(y) := 
        \begin{cases}
           e^{-c_{**} \left(\frac{|B'(\ell)|^{1/3}}{\varepsilon^{1/3}} \right)|y - \ell|} & \quad y \in [0 , \ell] \\
            1 & \quad y \in [\ell, \pi] \\
            \tilde{E}_{I,\ell}(-y) & \quad y \in [-\pi, 0]
        \end{cases}
    \end{align}
 \begin{align}
        E_{O,\ell}(y) := 
        \begin{cases}
            1 & \quad y \in [0, \ell] \\
            e^{-c_* \left(\frac{|B'(\ell)|^{1/3}}{\varepsilon^{1/3}} \right)|y - \ell|} & \quad y \in [\ell, \pi] \\
            E_{O,\ell}(-y) & \quad y \in [-\pi, 0]
        \end{cases}
\end{align}
\begin{align}
        \tilde{E}_{O,\ell}(y) := 
        \begin{cases}
            1 & \quad y \in [0, \ell] \\
            e^{-c_{**} \left(\frac{|B'(\ell)|^{1/3}}{\varepsilon^{1/3} } \right)|y - \ell|} & \quad y \in [\ell, \pi] \\
            \tilde{E}_{O,\ell}(-y) & \quad y \in [-\pi, 0].
        \end{cases}
\end{align}


We have that $c_{**} < c_* < c_0$ where $c_0$ is as in Proposition \ref{k:bdds} and the values of $c_*$ and $c_{**}$ will be fixed in the proofs of Lemma \ref{lemE1} and \ref{lemE2} below. $E_{O,\ell}$ and $E_{I,\ell}$ appear when measuring the size of  $\ai(g)$ at a distance approximately $\ell$ outside the support of $g$. $\tilde{E}_{O,\ell}$ and $\tilde{E}_{I,\ell}$ occur when we feed $E_{O,\ell}$ and $E_{I,\ell}$, respectively, through the Airy kernel. 


\begin{lemma}
\label{lemE1}
Let $0$ be a critical point of $b$. Suppose that  $g \in L^\infty(\T)$ and $g$ satisfies either 
\begin{enumerate}
    \item $\text{supp } g \subset [-\ell, \ell]$,
    \item $g \equiv 0$ on $[-\ell , \ell]$.
\end{enumerate}
where we assume $\ell \leq \sigma_\sharp$. 
Then in case (1):
\begin{align}
\label{OL}
    |\ai(g)(y)| \lesssim \frac{E_{O,\ell}(y)  \norm{g}_{L^{\infty}}  }{|b(y)- \lambda | + \varepsilon^{1/3}|B'(y)|^{2/3}},
\end{align}
and in case (2):
\begin{align}
\label{IL}
    |\ai(g)(y)| \lesssim \frac{E_{I,\ell}(y)  \norm{g}_{L^{\infty}}  }{|b(y) - \lambda | + \varepsilon^{1/3}|B'(y)|^{2/3}}.
\end{align}
\end{lemma}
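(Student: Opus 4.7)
The plan is to write $\ai(g)(y) = \int K(y,z,\lambda)\,g(z)\,dz$ and use
\begin{equation*}
|\ai(g)(y)| \leq \|g\|_{L^\infty} \int_{\mathrm{supp}\,g} |K(y,z,\lambda)|\,dz,
\end{equation*}
combined with Proposition~\ref{k:bdds}. A key preliminary observation is that $\A$ has no first-order derivative, so its Green's function is symmetric, $K(y,z,\lambda) = K(z,y,\lambda)$. Applying~\eqref{fsalt} to $K(z,y,\lambda)$ therefore yields the companion estimate
\begin{equation*}
|K(y,z,\lambda)| \lesssim \varepsilon^{-1} L(y)\,e^{-c_0|y-z|/L(y,z,\lambda)},
\end{equation*}
i.e.\ with $L(z)$ replaced by $L(y)$, and it is this form that produces the desired $y$-dependent prefactor on the right-hand sides of \eqref{OL}--\eqref{IL}.

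The next step is to split the exponent $c_0 = c_* + (c_0-c_*)$ and handle the two pieces separately. The $(c_0-c_*)$ portion is estimated crudely via $L(y,z,\lambda) \leq L(y)$ and then integrated,
\begin{equation*}
\int_\T \varepsilon^{-1} L(y)\,e^{-(c_0-c_*)|y-z|/L(y)}\,dz \lesssim \varepsilon^{-1} L(y)^2 = \frac{1}{|b(y)-\lambda|+\varepsilon^{1/3}|B'(y)|^{2/3}},
\end{equation*}
producing the rational factor in~\eqref{OL}--\eqref{IL}. The $c_*$ portion must deliver the cutoff $E_{O,\ell}$ or $E_{I,\ell}$, for which it suffices to show $1/L(y,z,\lambda) \gtrsim (|B'(\ell)|/\varepsilon)^{1/3}$ in the appropriate configuration.

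In case (1) with $y \in [\ell,\pi]$, I would use $|y-z| \geq y-\ell$ for $z \in [-\ell,\ell]$. When $|y-z| < \sigma_\sharp$, the first branch of~\eqref{fs3} together with the monotonicity of $|B'(\cdot)| \approx |\cdot|^N + \varepsilon^{N/(N+3)}$ on $[0,\sigma_\sharp]$ yields $|B'(y)| \geq |B'(\ell)|$ and hence the claimed lower bound; when $|y-z| \geq \sigma_\sharp$, the second branch gives $1/L \gtrsim (\sigma_\sharp/\varepsilon)^{1/2}$, which dominates $(|B'(\ell)|/\varepsilon)^{1/3}$ for $\varepsilon$ small. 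These combine to give $e^{-c_* (y-\ell)(|B'(\ell)|/\varepsilon)^{1/3}} = E_{O,\ell}(y)$ on $[\ell,\pi]$; the reflection $y\mapsto -y$ handles $[-\pi,-\ell]$, while on $[-\ell,\ell]$ we have $E_{O,\ell}\equiv 1$ and only the baseline estimate is needed. Case (2) on $y\in[0,\ell]$ runs in parallel: since $\mathrm{supp}\,g \subset \{|z|\geq\ell\}$ forces $|B'(z)|\geq|B'(\ell)|$ and $|y-z|\geq\ell-y$, the same mechanism delivers $E_{I,\ell}(y)$, and on $[\ell,\pi]$ again $E_{I,\ell}\equiv 1$.

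The main obstacle is the transition regime in which $|y-z|\approx\sigma_\sharp$, or $y$ wanders near a different critical point $\gamma_l$ so that $|B'(y)|$ is again small; here the separation $|\gamma_j-\gamma_l|\geq 4\sigma_\sharp$ from Assumption~\ref{assump:b} forces $|y-z|\geq\sigma_\sharp$ for $z\in[-\ell,\ell]\subset[-\sigma_\sharp,\sigma_\sharp]$, which pushes one into the second branch of~\eqref{fs3} where the bound $(\sigma_\sharp/\varepsilon)^{1/2} \gg (|B'(\ell)|/\varepsilon)^{1/3}$ closes the argument.
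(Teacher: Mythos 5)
Your proof is correct and follows the same overall strategy as the paper's: bound $|\ai(g)(y)|$ by $\|g\|_{L^\infty}\int_{\mathrm{supp}\,g}|K(y,z,\lambda)|\,dz$, split the exponent into a piece that produces the localization factor $E_{*,\ell}$ and a piece used to integrate in $z$. The one genuine refinement is your explicit use of the Green's-function symmetry $K(y,z,\lambda)=K(z,y,\lambda)$, which is indeed valid because $\A$ has no first-order term (the operator equals its transpose, and an integration by parts on $\T$ gives symmetry); this delivers the companion bound $|K(y,z,\lambda)|\lesssim \varepsilon^{-1}L(y)e^{-c_0|y-z|/L(y,z,\lambda)}$ at once. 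The paper leaves the passage from the $L(z)$-prefactor to the $L(y)$-prefactor implicit in the phrase ``consistent with the desired result,'' relying on the elsewhere-noted fact that $(L(z)/L(y))^\theta e^{-c_0|y-z|/L(y,z,\lambda)}\lesssim 1$ to trade one for the other; your symmetry observation makes that step cleaner and self-contained. One small point to tighten: the inequality $L(y,z,\lambda)\le L(y)$, which you use for the $(c_0-c_*)$ piece, is only immediate from the first branch of~\eqref{fs3} (i.e.\ when $|y-z|<\sigma_\sharp$); when $|y-z|\ge\sigma_\sharp$ it may fail, since $1/L(y,z,\lambda)=(\alpha+\sigma_\sharp)^{1/2}/\varepsilon^{1/2}$ can be smaller than $1/L(y)$ if $|b(y)-\lambda|>\sigma_\sharp$. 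That regime is harmless because $|y-z|\ge\sigma_\sharp$ forces $e^{-(c_0-c_*)|y-z|/L(y,z,\lambda)}\le e^{-c/\sqrt{\varepsilon}}$, which is super-exponentially smaller than the target $\varepsilon^{-1}L(y)^2\gtrsim 1$, but it is worth stating separately rather than letting the crude bound run over the whole torus.
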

\begin{proof}
    We only prove \eqref{IL} as the proof of \eqref{OL} is simpler. First, observe that for $|y|\geq \ell$ and the definition $E_{I,\ell}$, \eqref{IL} follows from \eqref{g1}. Therefore, it suffices to consider the case $|y| \leq \ell$. Assume that $y \geq 0$.  We have 
    \begin{align}
        \ai(g)(y) = \int_{-2\sigma_\sharp}^{ 2\sigma_\sharp} K(y,z,\lambda) g(z) dz + \int_{\T \setminus [-2\sigma_\sharp,  2\sigma_\sharp] }K(y,z,\lambda) g(z) dz.
    \end{align}
We focus on the first integral. Observe that for $y \in [0, \ell] $ and $|z| \geq \ell$ we have $|y-z| \geq |y-\ell|$.  Since $|z| \in [\ell, 2\sigma_\sharp]$  we also have that 
\begin{align}
    \frac{1}{L(y,z, \lambda)} \gtrsim \frac{|B'(\ell)|^{1/3}}{\varepsilon^{1/3}}.
\end{align} 
Therefore,
\begin{align}
    \int_{-2\sigma_\sharp}^{ 2\sigma_\sharp} K(y,z,\lambda) g(z) dz \lesssim e^{-\frac{c_0}{2} \left(  \frac{|B'(\ell)|^{1/3}}{\varepsilon^{1/3}} \right) |y- \ell|}\int_{-2\sigma_\sharp}^{ 2\sigma_\sharp} \epsilon^{-1} L(z) e^{-\frac{c_0}{2} \frac{|y -z|}{L(y,z,\lambda)}  } |g(z)| dz 
\end{align}
which is consistent with the desired result by choosing $c_*$ sufficiently small in terms of $c_0$. The contribution from the integral over $\T\setminus[-2 \sigma_\sharp, 2 \sigma_\sharp]$ satisfies better bounds on account of the improved decay of the fundamental solution for $|y-z| \geq \sigma_\sharp$. For $y < 0$, the above argument works with $\ell$ replaced by $-\ell$. This concludes the proof.



\end{proof}
Finally, we have the following relation between $E_{*,\ell}$ and $\tilde{E}_{*, \ell}, * \in \{ I, O \}$.
    \begin{lemma}
    \label{lemE2}
    Let 0 be a critical point of $b$. We have the following bounds: 
\begin{subequations}
\begin{align}
   |\ai(E_{O,\ell})(y)| & \lesssim \frac{\tilde{E}_{O, \ell}(y)}{|b(y) -\lambda| + \varepsilon^{1/3}|B'(y)|^{2/3}}, \label{OL1} \\
       |\ai(E_{I,\ell})(y)| &\lesssim \frac{\tilde{E}_{I, \ell}(y)}{|b(y) -\lambda| + \varepsilon^{1/3}|B'(y)|^{2/3}}. \label{IL1} 
    \end{align} 
\end{subequations}
\end{lemma}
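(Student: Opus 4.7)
The plan is to prove both bounds via direct integration against the Airy kernel, using Proposition~\ref{k:bdds}. Set $M := |B'(\ell)|^{1/3}/\varepsilon^{1/3}$, so that $E_{*,\ell}$ decays at rate $c_* M$ in its decay region and we must produce the strictly slower rate $c_{**} M$ in $\tilde E_{*,\ell}$. Since the two cases $\ast = I, O$ are essentially symmetric (only the roles of the two subregions of $\T$ are swapped), I will describe the argument for $\ai(E_{O,\ell})$ in detail and indicate where $\ai(E_{I,\ell})$ differs.

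\textbf{Step 1.} Write $\ai(E_{O,\ell})(y) = \int_\T K(y,z,\lambda) E_{O,\ell}(z)\,dz$ and use the bound $|K(y,z,\lambda)| \lesssim \varepsilon^{-1} L(z) e^{-c_0|y-z|/L(y,z,\lambda)}$ from \eqref{fsalt}. Split into two cases according to where $y$ lies.

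\textbf{Step 2 (easy region).} For $y$ in the region where $\tilde E_{O,\ell}(y)=1$ (i.e.\ $y\in[-\ell,\ell]$), the desired bound reduces to $|\ai(E_{O,\ell})(y)|\lesssim (|b(y)-\lambda|+\varepsilon^{1/3}|B'(y)|^{2/3})^{-1}$, which follows from the argument of Proposition~\ref{ai:bdds}\,\eqref{g1} together with $\|E_{O,\ell}\|_{L^\infty}\leq 1$: integrate $A(z) e^{-c_0|y-z|/L(y,z,\lambda)}$ directly in $z$.

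\textbf{Step 3 (decay region).} By symmetry assume $y\in[\ell,\pi]$, so $\tilde E_{O,\ell}(y) = e^{-c_{**} M |y-\ell|}$. Decompose the integral into $z\in S_1 := [-\ell,\ell]$ (where $E_{O,\ell}\equiv 1$) and $z\in S_2 := \T\setminus[-\ell,\ell]$ (where $E_{O,\ell}$ decays at rate $c_* M$). For $z\in S_1$, we have $|y-z|\geq |y-\ell|$, and since $|B'(y)|\gtrsim |B'(\ell)|$ for $y\geq\ell$ near the critical point, \eqref{fs3} gives $1/L(y,z,\lambda) \gtrsim M$ whenever $|y-z|<\sigma_\sharp$. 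Splitting $c_0 = c_{**} + (c_0-c_{**})$ produces
\begin{equation*}
e^{-c_0 |y-z|/L(y,z,\lambda)} \leq e^{-c_{**} M |y-\ell|}\, e^{-(c_0-c_{**})|y-z|/L(y,z,\lambda)},
\end{equation*}
and integrating the remaining factor against $A(z)$ yields the desired $(|b(y)-\lambda|+\varepsilon^{1/3}|B'(y)|^{2/3})^{-1}$. The piece $|y-z|\geq \sigma_\sharp$ has even better kernel decay by the second branch of \eqref{fs3} and contributes harmlessly. For $z\in S_2$ on the same side as $y$, split on the sign of $|z-\ell|-|y-\ell|$: if $|z-\ell|\geq |y-\ell|$, simply use $e^{-c_* M|z-\ell|}\leq e^{-c_{**} M |y-\ell|}$ (as $c_{**}<c_*$); if $|z-\ell|<|y-\ell|$, the identity $|y-\ell| = |y-z|+|z-\ell|$ rewrites the product as
\begin{equation*}
e^{-c_0|y-z|/L}\, e^{-c_* M |z-\ell|} = e^{-c_* M|y-\ell|}\, e^{-c_0|y-z|/L + c_* M |y-z|} \leq e^{-c_{**} M|y-\ell|}\, e^{-(c_0-c_*)|y-z|/L},
\end{equation*}
where we used $1/L\gtrsim M$ and $c_*<c_0$, and the remainder again integrates to the resolvent factor. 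The portion of $S_2$ on the opposite side of the critical point is controlled by the stronger kernel decay for $|y-z|\geq\sigma_\sharp$. The argument for $\ai(E_{I,\ell})$ is identical with $S_1$ and $S_2$ interchanged.

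\textbf{Main obstacle.} The essential technical point is keeping the three constants $c_{**}<c_*<c_0$ properly separated across all subcases, and verifying the uniform lower bound $1/L(y,z,\lambda)\gtrsim M$ throughout the relevant region; this rests on the monotonicity $|B'(y)|\gtrsim |B'(\ell)|$ for $y\geq\ell$ (from the definition \eqref{B}) and on the piecewise form of $L(y,z,\lambda)$ in \eqref{fs3}. Once these are in hand, the remaining integral against $A(z)$ is of exactly the form treated in Proposition~\ref{ai:bdds}.
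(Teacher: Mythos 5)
Your proposal takes essentially the same approach as the paper: split the $z$-integral at $|z|=\ell$, use $|y-z|\ge|y-\ell|$ together with $1/L(y,z,\lambda)\gtrsim|B'(\ell)|^{1/3}/\varepsilon^{1/3}$ on the side where $E_{*,\ell}\equiv 1$, and on the decaying side combine the two exponentials — your identity $|y-\ell|=|y-z|+|z-\ell|$ is exactly what proves the paper's inequality $\frac{|y-z|}{L}+M|z-\ell|\gtrsim M|y-\ell|$ (the paper treats $E_{I,\ell}$ in detail where you do $E_{O,\ell}$, but the cases are mirror images). One small imprecision: you claim the portion of $S_2$ on the opposite side of the critical point is handled by the $|y-z|\ge\sigma_\sharp$ branch of $\eqref{fs3}$, but if $y,z$ are both close to $0$ on opposite sides one can have $2\ell\le|y-z|<\sigma_\sharp$; that subcase is still fine because $|y-z|\ge|y-\ell|$ holds and $E_{O,\ell}(z)\le 1$, i.e.\ it reduces to the same estimate as $S_1$, just with a different justification than the one you gave.
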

\begin{proof}
    We start with the  proof of \eqref{IL1} and briefly comment on the changes needed for the proof of \eqref{OL1} at the end of the proof. Recall that we can take $c_{**}$ to be sufficiently small with respect to $c_0, c_*$. For $|y|\geq \ell$, \eqref{IL1} follows from \eqref{g1}. Therefore, it suffices to consider the case $|y| \leq \ell$. Using \eqref{fs2} we have 
    \begin{align}
       |\ai(E_{I,\ell})(y)| \lesssim \int_{\T} E_{I,\ell}(z) \varepsilon^{-1} L(z) e^{-c_0\frac{|y-z|}{L(y,z, \lambda )}} dz.
    \end{align}
  We decompose the integral as 
  \begin{align}
     \int_{\T}  = 
     \int_{|z| \leq \ell }   +  \int_{|z| \geq \ell }. 
  \end{align}
We assume that $y > 0$, the case with $y <0 $ being completely analogous. Using that $y \leq \ell$, for $|z|\geq \ell$ we have 
\begin{align}
        e^{-c_0\frac{|y-z|}{L(y,z,\lambda)}}  &\geq e^{-\frac{c_0}{3}\frac{|y-\ell|}{L(y,z,\lambda)}} e^{-\frac{c_0}{3}\frac{|\ell-z|}{L(y,z,\lambda)}}. \\
\end{align}
For $|z|\geq \ell $ we have
\begin{align}
    \frac{1}{L(y,z, \lambda)} \gtrsim \frac{|B'(\ell)|^{1/3}}{\varepsilon^{1/3}}.
\end{align}
Therefore,
\begin{align}
  \int_{|z| \geq \ell }   E_{I,\ell}(z) \varepsilon^{-1} L(z) e^{-c_0\frac{|y-z|}{L(y,z, \lambda )}} dz 
  &\lesssim e^{-c_* \left(  \frac{|B'(\ell)|^{1/3}}{\varepsilon^{1/3}} \right)|y - \ell|} \int_{|z| \geq \ell} \varepsilon^{-1} L(z) e^{-c_*\frac{|z- y|}{L(y,z, \lambda)} } dz \\
      &\lesssim   L^2(y)\varepsilon^{-1}e^{-c_*\frac{|B'(\ell)|^{1/3}}{\varepsilon^{1/3}}|y - \ell|} \\
  &\lesssim  \frac{\tilde{E}_{I, \ell}(y)}{|b(y)-\lambda| + \varepsilon^{1/3}|B'(y)|^{2/3}} \, .
\end{align}
Now for $|z| \leq \ell$, we exploit the exponential decay of $E_{I,\ell}$:
\begin{align}
    \int_{|z| \leq \ell} E_{I,\ell}(z) \varepsilon^{-1} L(z) e^{-c_0\frac{|y-z|}{L(y,z, \lambda )}} dz 
    &\lesssim \int_{|z| \leq \ell} e^{-c_* \left(\frac{|B'(\ell)|^{1/3}}{\varepsilon^{1/3}} \right) |z - \ell|} \varepsilon^{-1} L(z) e^{-c_0\frac{|y-z|}{L(y,z, \lambda )}} dz  \\
    &\lesssim 
    \int_{|z| \leq \ell} e^{-c_* \left(\frac{|B'(\ell)|^{1/3}}{\varepsilon^{1/3}} \right) |y - \ell|} \varepsilon^{-1} L(z) e^{-c_*\frac{|y-z|}{L(y,z, \lambda )}} dz\\
    &\lesssim \frac{\tilde{E}_{O, \ell}(y)}{|b(y) -\lambda| + \varepsilon^{1/3}|B'(y)|^{2/3}},
    \end{align}
where we have used that for $|y|, |z| \leq \ell $ we have
\begin{align}
\frac{|y -z|}{L(y,z, \lambda)} + \frac{|B'(\ell)|^{1/3}}{\varepsilon^{1/3}}||z - \ell| \gtrsim \frac{|B'(\ell)|^{1/3}}{\varepsilon^{1/3}}|y - \ell|.
\end{align}
This can be proved by considering separately the cases $|y-\ell| \approx |z-\ell|$ and the case where $|y -\ell| \gg |z - \ell|$. For the proof of \eqref{OL1}, the proof is similar with the roles of $y$ and $z$ switched. In the case where $|y|, |z| \geq \ell$ we can instead use that  $|B'(\ell)| \lesssim |B'(y)|+ |B'(z)|$.
\end{proof}




\subsection{Overview of the proof}

In the sequel it will be convenient to decompose various functions based on the the critical points of $b(y)$ which we now describe. Let $ \{ \gamma_j\}, j\geq 1$ be an enumeration of all of  the critical points. Let $\chi\in C_c^\infty(-2,2)$ be a cutoff function satisfying $\chi(y) \equiv 1, y \in [-1, 1]$.  

Define
\begin{align}
\label{chij}
    \chi_j(y) := \chi\left( \frac{y- \gamma_j}{\sigma_{\sharp}}  \right),
 \end{align}
where $\sigma_\sharp$ is as in Assumption \ref{assump:b}. This implies that
\begin{align}
\label{chijl}
    \text{supp} \chi_j  \cap \text{supp} \chi_l = \emptyset, \quad  j \neq l. 
\end{align}
Furthermore, define $\chi_0$ so that 
\begin{align}
    \label{chi0}
    1 = \chi_0(y) + \sum_{j\geq 1} \chi_j(y).
\end{align}
We apply these cutoffs to the initial data $f_*^{\rm in}$ to get  
\begin{align}
\label{finj}
    f_j^{\rm in }(y) :=  f_*^{\rm in}(y) \chi_j(y)  , \quad j \geq 0.
\end{align}
Let $f_j(t,y)$ be the associated solution of \eqref{eq:passive:k} with initial data $f_j^{\rm in}(y)$ for $j \geq 0$. By linearity, it suffices to prove that Theorem \ref{thm:1} holds for each $j$. We will treat the cases $j \geq 1$ and $j =0$ separately, focusing mainly on $j \geq 1$ as this case is harder. Fix an index $j \geq 1$ and let the associated critical point be of order $m$. We introduce the length scale
\begin{align}
\label{length}
    \ell(t) := \frac{\sigma_\sharp}{10}\max( (1 +t)^{-\frac{1}{N + 1}}, \varepsilon^{\frac{1}{N + 3}}),
\end{align}
and define 
    \begin{align}
     \chi_{ \ell(t)} (y) &:= \sum_{j \geq 1}\chi \left( \frac{y-\gamma_j}{\ell(t)} \right). \label{crit1}
    \end{align}
Note that $\ell(t)$ satisfies
\begin{align}
    \frac{\varepsilon^{1/3}}{|B'(y)|^{1/3}} \lesssim \ell(t),
\end{align}
with the quantities being comparable when $y$ is near a critical point $\gamma_j$ of order $N$ and $|y - \gamma_j| \lesssim \varepsilon^{\frac{1}{N + 3}}$. We further decompose $f_j^{\rm in}$ as 
\begin{subequations}
    \begin{align}
        f_{I}^{\rm in }(y) &:= f_j^{\rm in} \chi_{j,  \ell(t)}(y), \label{in} \\
        f_O^{\rm in }(y) &:= f_j^{\rm in} (1-\chi_{j,  \ell(t)}(y) )\label{out}, 
     \end{align}
\end{subequations}
and decompose $f_j(t,y)$ as 
\begin{align}
    f_j(t,y) = f_I(t,y) + f_O(t,y),
\end{align}
where
\begin{subequations}
    \begin{align}
        f_{I}(t,y) &= e^{L_\varepsilon t}f_I^{\rm in}, \label{ievo} \\
        f_{O}(t,y) &= e^{L_\varepsilon t}f_O^{\rm in} \label{oevo}.
    \end{align}
\end{subequations}

With the notation set, we now give an outline of the proof. Using the representation formula \eqref{rep} we will employ a similar idea as used in the inviscid case for proving the desired mixing estimates. However, one key issue in the viscous setting is the need to \emph{saturate the cutoffs at an appropriate viscous scale}, determined by the (maximal) order of vanishing of the derivative at the critical points. This is precisely the reason for the definition of \eqref{length}.
Essentially, the viscous case can be treated like the inviscid case until the viscous effects become dominant. This at first appears as an obstacle to proving global-in-time decay results as in the inviscid case. However, precisely at this time, \emph{enhanced dissipation combined with smallness of the viscous region} allow the mixing estimates to be proven for all time.

On a more technical level, to study the inner problem we make use of duality and employ the $L^\infty$ enhanced dissipation estimates of \cite{GLM24}, see remarks 2.2 and 2.3 therein. For the outer problem, we seek to use the oscillations coming from $e^{-i\lambda t}$ in \eqref{rep}. However, unlike the inviscid case \eqref{int3}, we require regularity of the spectral density function $f(y, \lambda)$ with respect to the spectral parameter $\lambda$ instead of the physical variable $y$. A key difficulty in obtaining regularity with respect to $\lambda$ is the singularity in the resolvent equation \eqref{reso} when $b(y) = \lambda$. Instead of directly differentiating \eqref{reso} with $\p_\lambda$, it is better to track $\p_\lambda$ through a combination of $\p_y$ and the so called good derivative 
$D_\lambda \approx \frac{1}{b'(y)}\p_y + \p_\lambda$. Formally, $\frac{1}{b'(y)}\p_y + \p_\lambda$  commutes with $b(y) - \lambda$ but the critical points of $b(y)$ necessitate modifying the definition of $D_\lambda$ so that it is no longer singular at the critical points. This introduces the commutator $[D_\lambda, b(y) -\lambda ]$. There is also the ``viscous" commutator term $[\varepsilon \p_y^2,  D_\lambda]$. By appropriately designing $D_\lambda$, it is possible to leverage the rapid decay of the Airy kernel away from the support of its data to ensure that $[D_\lambda, b(y) -\lambda ]$ is consistent with the desired bounds. For the terms arising from $[\varepsilon \p_y^2, D_\lambda]$, the essential gain comes from the correct regularization scale and the smallness from the  $\varepsilon$ prefactor. 

Before beginning the proof of the main theorem, we state an $L^\infty$ enhanced dissipation bound which is slightly modified from \cite{GLM24}.
\begin{theorem}[$L^\infty$ enhanced dissipation]
\label{edlinf}
    Let $f(t,y): [0, \infty) \times \T \to \C$ satisfy \eqref{reduced} with initial data $f^{\rm in }$ and assume that $b(y)$ satisfies Assumption~\ref{assump:b}. Then there exist $\varepsilon_0, c_1 > 0$ such that for all $\varepsilon \in (0, \varepsilon_0)$ and for all $t > 0$ we have that
    \begin{align}
       \norm{f(t)}_{L^\infty} \lesssim e^{-c_1 \varepsilon^{\frac{N+1}{N+3}}t  } \norm{f^{\rm in}}_{L^\infty}. 
    \end{align}
\end{theorem}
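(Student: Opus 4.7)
The plan is to apply the $L^\infty$ enhanced dissipation framework of \cite{GLM24} to the present setting, with the minor modifications needed to accommodate critical points of arbitrary maximum order $N$. The framework rests on two ingredients: an $L^2$ enhanced dissipation estimate at the sharp rate, and a pointwise heat-equation comparison for $|f|^2$.

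First I would verify the $L^2$ enhanced dissipation,
\begin{equation*}
\|e^{tL_\varepsilon} f\|_{L^2(\T)} \les e^{-c_0 \varepsilon^{(N+1)/(N+3)} t} \|f\|_{L^2(\T)},
\end{equation*}
via the Airy kernel bounds of Proposition~\ref{k:bdds}. The pointwise exponential decay \eqref{fs2}--\eqref{fs3} of $K(y,z,\lambda)$, combined with a Schur test, would give an $L^2 \to L^2$ resolvent bound for $\ai$ uniformly on the line $\Re \lambda = -\sigma_0 \varepsilon^{(N+1)/(N+3)}$. Plugging this into the contour representation \eqref{rep} -- or equivalently invoking a Gearhart--Pr\"uss type argument as in \cite{Wei21} -- would then yield the claimed $L^2$ decay of the semigroup at the spectral gap rate.

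Second, since $b(y)$ is real-valued, the identity
\begin{equation*}
\p_t |f|^2 = \varepsilon \p_y^2 |f|^2 - 2\varepsilon |\p_y f|^2
\end{equation*}
shows that $|f|^2$ is a subsolution of the one-dimensional heat equation. By the comparison principle, this would yield both the $L^\infty$-contractivity $\|e^{tL_\varepsilon} f\|_{L^\infty} \leq \|f\|_{L^\infty}$ and the pointwise heat-kernel control $|f(t,y)|^2 \leq (e^{(t-s)\varepsilon \p_y^2}|f(s)|^2)(y)$ for $0 \leq s \leq t$, which feeds into the parabolic smoothing estimates at the $L^\infty$ level.

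With both ingredients in place, the $L^\infty$ enhanced dissipation would follow from the $L^p$-bootstrap argument of \cite[Remarks 2.2 and 2.3]{GLM24}: iterating the parabolic smoothing $L^{2^k} \to L^{2^{k+1}}$ on suitably chosen intermediate intervals whose total length is comparable to the enhanced dissipation time $\varepsilon^{-(N+1)/(N+3)}$, one can balance the $\varepsilon$-dependent smoothing losses against the enhanced dissipation gain, so that the telescoping product of constants remains bounded as $k \to \infty$. The main obstacle is the first step: converting the pointwise Airy bounds into a uniform $L^2 \to L^2$ resolvent bound that preserves the sharp spectral gap $\sigma_0 \varepsilon^{(N+1)/(N+3)}$, since the kernel depends sensitively on the location of the critical layer and on the order of the critical points. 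Once that is in place, the $L^\infty$ upgrade is essentially a direct application of the results of \cite{GLM24}, requiring only adjustments for our specific enhancement rate $\varepsilon^{(N+1)/(N+3)}$ and resulting in some $c_1 \in (0, c_0)$.
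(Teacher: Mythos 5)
Your proposal is not what the paper does, though it is a plausible alternative route. The paper's entire ``proof'' of Theorem~\ref{edlinf} is the two sentences following its statement: it simply invokes Corollary~1 of~\cite{GLM24} --- which already covers critical points of arbitrary finite order~$N$ --- and resolves the only mismatch, namely that \cite{GLM24} is stated for the $x$-dependent equation~\eqref{adv:diff} while Theorem~\ref{edlinf} concerns the mode-by-mode equation~\eqref{reduced}, via the conjugation trick $F(t,x,y) := f(t,y)e^{ix} + \bar f(t,y) e^{-ix}$. Since $F$ is a real-valued solution of~\eqref{adv:diff}, the cited corollary applies to $F$, and evaluating at $x=0$ and $x=\pi/2$ recovers $\Re f$ and $\Im f$. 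That trick is the sole content of the paper's argument, and your proposal omits it (understandably, since you work directly with the scalar mode equation).

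By contrast, you propose to re-derive the content of~\cite{GLM24}: obtain the $L^2$ enhanced dissipation from the Airy resolvent bounds of Proposition~\ref{k:bdds} via a Schur test and a Gearhart--Pr\"uss argument, observe that $|f|^2$ is a subsolution of $\p_t - \varepsilon\p_y^2$ because $b$ is real, and then upgrade $L^2$-decay to $L^\infty$-decay by parabolic bootstrapping. Each ingredient is in principle sound: the Schur test on~\eqref{fs2}--\eqref{fs3} gives $\sup_z\int |K(y,z,\lambda)|\,dy \lesssim (\varepsilon^{1/3}\inf_z|B'(z)|^{2/3})^{-1}\lesssim\varepsilon^{-(N+1)/(N+3)}$ uniformly on $\Re\lambda=-\sigma_0\varepsilon^{(N+1)/(N+3)}$, which does yield the $L^2$ spectral gap; the subsolution identity $\p_t|f|^2 = \varepsilon\p_y^2|f|^2 - 2\varepsilon|\p_yf|^2$ is correct; and the iterated-$L^p$ smoothing argument is the standard route to the $L^\infty$ upgrade. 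However, you are reconstructing a proof that the paper does not actually carry out --- it is a black-box citation --- and your framing that ``minor modifications'' to \cite{GLM24} are needed ``to accommodate critical points of arbitrary maximum order $N$'' is slightly off: the cited corollary already treats general $N$, and the only modification the authors need is the conjugation trick translating between the full equation and the single Fourier mode. If you intend to include a self-contained derivation rather than a citation, your sketch would need the iteration step made quantitative (the $\varepsilon$-dependence of the Nash/Gagliardo--Nirenberg losses versus the ED gain must be tracked explicitly), but the overall structure is reasonable.
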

This differs from the statement of Corollary 1 in \cite{GLM24} which is stated for solutions of \eqref{adv:diff}. We can handle this by applying the corollary to $F(t,x,y) := f(t,y)e^{ix} + \bar{f}(t,y) e^{-ix} $ where $\bar{f}$ denotes complex conjugate. Taking $x = 0, \pi/2$ yields the real and imaginary part of $f(t,y)$ respectively in terms of $F$.

\subsection{Proof of Theorem~\ref{thm:1}}
\label{mainproof}
Recall that it suffices to only consider a fixed index $j$. We assume that $j \geq 1$ and include the relevant modifications in Section~\ref{j=0} for $j = 0$.  We start with the estimates for the inner solution $f_I(t,y)$. Recalling the dual characterization of $H^{-1}$, for $\phi \in H^1(\T)$
it follows from Theorem~\ref{edlinf} that 
\begin{align}
\label{inn2}
    \lb e^{t L_\varepsilon }f_{I}^{\rm in }, \phi \rb = 
   \int_{\T} f_I^{\rm in}(y) e^{tL_\varepsilon} \bar{\phi} \, dy  
     &\lesssim |\text{supp } f_I^{\rm in } | \norm{f_I^{\rm in } }_{L^\infty} e^{-c_1 \varepsilon^{\frac{N+1}{N+3} } t} \norm{\phi}_{L^\infty }  \\
         &\lesssim \ell(t) e^{-c_1 \varepsilon^{\frac{N+1}{N+3} } t}  \norm{ f^{\rm in }}_{L^\infty}, \notag
 \end{align}
which yields the desired estimate. 

We now consider the outer solution $f_O$. We define the good derivative $D_\lambda$:
\begin{subequations}
   \label{good:d}
    \begin{align}
 D_\lambda &:= a(t,y) \p_y + \p_\lambda,\label{thed} \\
    a(t,y) &:= \frac{1 - \chi_{\frac{\ell(t)}{2}}(y) }{b'(y)}.\label{modd}
\end{align}
\end{subequations}
\begin{remark}
    The reason for choosing $\frac{\ell(t)}{2}$ in the definition of $a(t,y)$ is for handling  the ``Good Derivative commutator"   \eqref{m7}. This definition ensures that the commutator term and $f_O$ have a weak interaction. 
 \end{remark}
From Theorem \ref{rep}, we can represent $f_O(t,y)$ with the formula
\begin{align}
\label{m1}
    f_O(t,y) = \frac{e^{ -\sigma_0 \varepsilon^{\frac{N+1}{N+3}}  t}}{2 \pi}\int_{\R} e^{-i \lambda t  } f_O(y, \lambda) d \lambda,
\end{align}
where $f_O(y,\lambda)$ satisfies
\begin{align}
\label{m2}
    -\varepsilon\p_y^2 f_O(y, \lambda)   -\sigma_0 \varepsilon^{\frac{N+1}{N+3}}f_O(y, \lambda) + i(b(y) - \lambda)f_O(y, \lambda) = f_O^{\rm in}(y).
\end{align}
Integrating by parts with respect to $\lambda$ and using the definition of the good derivative \eqref{good:d} yields
\begin{align}
\label{m3}
  &\int_{\R} e^{-i\lambda t} f_{O}(y, \lambda) d \lambda =  
   \frac{1}{it} \int_\R e^{-i\lambda t}  \p_\lambda f_{O}(y, \lambda) d \lambda  \\
  &= \frac{1}{it} \int_\R e^{-i\lambda t} \left( D_\lambda f_{O}(y, \lambda)     -a(t,y ) \p_y f_{O}(y, \lambda) \right) d \lambda.  \notag
\end{align}
Applying $\D$ to \eqref{m2} yields
\begin{align}
\label{m4}
    &-\varepsilon\p_y^2 \D f_O -  \sigma_0\varepsilon^{\frac{N+1}{N+3}} \D f_O + i(b(y) -\lambda) \D f_O \\
    &=  a \p_y f_O^{\rm in}  + \chi_{\frac{\ell(t)}{2}}(y)f_O  -  \varepsilon a''(t,y) \p_y f_O - 2\varepsilon a'(t,y) \p_y^2 f_O.  \notag
\end{align}
Combining \eqref{m3}, \eqref{m4}, and recalling that 
$f_O(y, \lambda) = A_\varepsilon^{-1} (f_O^{\rm in })    $
we have 
\begin{align}
\label{m5}
    &it\int_\R e^{-i \lambda t} f_O(y, \lambda) d\lambda \\ &= \int_\R e^{-i \lambda t}  \left[ A_\varepsilon^{-1} (a \p_y f_O^{\rm in})  + A_\varepsilon^{-1} ( \chi_{\frac{\ell(t)}{2}} f_O) - A_\varepsilon^{-1}( \varepsilon a''(t, \cdot ) \p_y f_O ) - A_\varepsilon^{-1}(2\varepsilon a'(t,\cdot) \p_y^2 f_O)\right] d \lambda \notag \\
    &- \int_\R e^{-i \lambda t}  a(t,y)\p_yA_\varepsilon^{-1}(f_{O}^{\rm in}) d \lambda.  \notag
\end{align}
We will show that each term in \eqref{m5} is bounded up to an implicit constant by $ \ell^{-N}(t) \norm{f^{\rm in}}_{H^1} $. This combined with the factor $e^{-\sigma_0  \varepsilon^{\frac{N+1}{N+3}   }t} $ in \eqref{m1} will imply Theorem \ref{thm:1}. We categorize the terms as follows:
\begin{enumerate}
    \item Initial data 
    \begin{align}
    \label{m6}
      A_\varepsilon^{-1} (a \p_y f_O^{\rm in}) , \quad   a(t,y)\p_y A_\varepsilon^{-1} ( f_O^{\rm in}),
\end{align}
    \item Good Derivative commutator
    \begin{align}
    \label{m7}
        \ai(\chi_{\frac{\ell(t)}{2}} f_O(\cdot, \lambda)),
    \end{align}
    \item Viscous errors
    \begin{align}
    \label{m8}
        \ai(\varepsilon  a''(t, \cdot)  \p_y f_O(\cdot, \lambda) ), \quad \ai(\varepsilon a' (t, \cdot ) \p_y^2 f_O(\cdot, \lambda)).
    \end{align}
\end{enumerate}
We begin with the initial data terms \eqref{m6}. Recall that  
\begin{align}
   \frac{e^{-\sigma_0 \varepsilon^{\frac{N+1}{N+3}}}}{2\pi}\int_\R e^{-i \lambda t}  A_\varepsilon^{-1} (a \p_y f_O^{\rm in}) d\lambda = e^{t L_\varepsilon}(a \p_y f_O^{\rm in}). 
\end{align}
Using Theorem \ref{edlinf} implies
\begin{align}
    \lb e^{tL_\varepsilon} (a \p_y f_O^{\rm in}), \phi \rb   = \int_{\T} a \p_y f_O^{\rm in} \, e^{tL_\varepsilon} \bar{\phi}\,  dy \leq \norm{a \p_y f_O^{\rm in }}_{L^1} \norm{ e^{tL_\varepsilon} \bar{\phi}}_{L^\infty}  \lesssim \norm{a \p_y f_O^{\rm in}}_{L^1} e^{-\sigma_0  \varepsilon^{\frac{N+1}{N+3}   }t} . 
\end{align}
The definition of $f_O^{\rm in} $ \eqref{out} implies
\begin{align}
    \norm{a \p_y f_O^{\rm in}}_{L^1} &\lesssim \norm{a (1-\chi_{\ell(t)} )\p_y f_j^{\rm in} }_{L^1} + \norm{a \chi_{ \ell(t)}'  f_j^{\rm in }}_{L^1} \\
    &\lesssim \norm{a}_{L^2}\norm{\p_y f_j^{\rm in }}_{L^2} + \norm{a}_{L^1} \norm{\chi_{\ell(t)}'}_{L^{\infty}} \norm{f_j^{\rm in}}_{L^\infty}\notag\\
    &\lesssim \ell^{-N}(t)\norm{f^{\rm in}}_{H^1}.
\end{align}
Next, we observe that 
\begin{align}
    \frac{e^{-\sigma_0 \varepsilon^{\frac{N+1}{N+3}}}}{2\pi}\int_\R e^{-i \lambda t}  a(t,y) \p_yA_\varepsilon^{-1} ( f_O^{\rm in}) d\lambda = a(t,y) \p_ye^{t L_\varepsilon}(f_O^{\rm in}).
    \end{align}
Therefore, for $\phi$ satisfying $\norm{\phi}_{H^1} = 1$,
\begin{align}
    \lb \phi,  a(t,y) \p_y e^{t L_\varepsilon} (f_O^{\rm in}) \rb 
    &= -\lb \p_y(a (t,y) \phi ), e^{t L_\varepsilon} f_O^{\rm in} \rb  \\
    &\lesssim  e^{-\sigma_0  \varepsilon^{\frac{N+1}{N+3}   }t} (\norm{a'}_{L^1} \norm{\phi}_{L^\infty} \norm{f_{O}^{\rm in}}_{L^\infty }       + \norm{a}_{L^2} \norm{\p_y \phi}_{L^2} \norm{ f_O^{\rm in } }_{L^\infty}) \\
    &\lesssim \ell^{-N}(t) e^{-\sigma_0  \varepsilon^{\frac{N+1}{N+3}   }t}  \norm{  f^{\rm in} }_{H^1}.
\end{align}
This completes the proof for the initial data terms. 

\begin{remark}
\label{rem:local}
Before estimating \eqref{m7} and \eqref{m8}, we highlight some useful simplifications.  We may assume that $\gamma_j =0$. This amounts to shifting the domain of integration. By the embedding $L^1(\T) \to H^{-1}(\T)$ it suffices to estimate \eqref{m7} and \eqref{m8} in $L_{y,\lambda}^1$. The terms in  \eqref{m7} and \eqref{m8} are of the form
\begin{align}
    \ai( g \p_y^{\alpha} f_O(\cdot, \lambda) ), \quad \alpha \in \{ 0, 1, 2 \}, 
\end{align}
where $\norm{g}_{L^\infty (\T)} \lesssim \epsilon^{-2}$. We claim that  
\begin{align}
    \int_{ \T \setminus [ - 3 \sigma_\sharp , 3 \sigma_\sharp]} \int_{\R} |\ai( g \p_y^{\alpha} f_O )|   d \lambda  dy\lesssim \ell^{-N}(t). 
\end{align}
To see this, recall from the definition of $f_O^{\rm in}$~\eqref{out} and the definition of $f_j^{\rm in}$~\eqref{finj} that the support of $f_O^{\rm in}$ is contained in $[- 2 \sigma_\sharp,   2\sigma_\sharp ]$. Therefore, from equation \eqref{m2} and the proof of Lemma~\ref{lemE1}, we can prove, for some $\delta_1 >0$ and $\beta_1 \in (2\sigma_{\sharp} ,2.5\sigma_\sharp)$, both independent of $\varepsilon$, that 
\begin{align}
    |f_O(y, \lambda)| \lesssim  e^{-\delta_1 \frac{d(y, I_{\beta_1}(0))}{\varepsilon^{1/2}} }, \quad I_{\beta}(0) := [- \beta,  \beta ]
\end{align}
where $d(y, I)$ is the distance to interval $I$. Moreover, for $\delta_2 > 0$ satisfying $\delta_2 \leq \delta_1$ and $ \beta_2 \in (\beta_1,  2.5 \sigma_\sharp)$, both independent of $\varepsilon$, we can use the fundamental solution bounds from Proposition \ref{k:bdds} to deduce that 
\begin{align}
    |\ai(f_O)(y, \lambda)| \lesssim e^{-\delta_2 
    \frac{ d(y, I_{\beta_2}(0) )  }   {\varepsilon^{1/2}}       }. 
\end{align}
This argument can be iterated an arbitrary number of times depending on how many times the operator $\ai$ is applied. Using this observation, in all of the terms appearing in the estimates of \eqref{m7} and \eqref{m8} (see \eqref{gdcom}, \eqref{v11}, \eqref{v22}, \eqref{v33}, \eqref{v44}, \eqref{v55}, and \eqref{v66}), for $y \in [ - 3 \sigma_\sharp, 3 \sigma_\sharp]^c $ we can obtain bounds of the form
\begin{align}
     |\ai( g \p_y^{\alpha} f_O(\cdot, \lambda) )(y)| \lesssim \frac{ \epsilon^{-3}    e^{-\frac{\delta}{\epsilon^{1/2}}  }    }   {(|b(y) - \lambda| + \epsilon^{1/3}|B'(y)|^{2/3})^{ \gamma} }, \quad  \gamma \in (1, 3].
\end{align}
for some $\delta > 0$. Therefore,
\begin{align}
    \int_{\R} |\ai( g \p_y^{\alpha} f_O(\cdot, \lambda) )|  
    d \lambda \lesssim \epsilon^{-5} e^{-\frac{\delta}{\epsilon^{1/2}}} \lesssim 1
\end{align}
which yields the desired result since $\ell(t) \leq 1$. Consequently, Theorem \ref{thm:1} will follow if we can prove 
\begin{align}
    \int_{- 3 \sigma_\sharp}^{ 3 \sigma_\sharp} \int_{\R} |\ai( g \p_y^{\alpha} f_O )|   d \lambda  dy\lesssim \ell^{-N}(t). 
\end{align}
 \end{remark}
We now estimate the commutator term \eqref{m7}. In the following, we will frequently make use of the fact that 
\begin{align}
\label{ellcomp}
    \ell(t) \gtrsim \frac{\varepsilon^{1/3}}{|B'(\ell)|^{1/3}}
\end{align}
which saturates when $0$ is a critical point of order $N$ and $ \ell(t) \approx \varepsilon^{\frac{1}{N+3}}$.  
The proofs of Proposition \ref{ai:bdds} , Lemma \ref{lemE1},  and Lemma \ref{lemE2}  yield
\begin{align}
\label{gdcom}
    |\ai(\chi_{\frac{\ell(t)}{2}} f_O(\cdot, \lambda)) (y, \lambda)| \lesssim     \frac{\norm{f_j^{\rm in}}_{L^\infty}  E_{O, \ell(t)}(y) \tilde{E}_{I ,\ell(t)}(y)  }{(|b(y) -\lambda|   + \varepsilon^{1/3} |B'(y)|^{2/3})^2   },
\end{align}
where we have used $\text{supp}(1 - \chi_{\ell(t)}) \subset [-\ell(t), \ell(t)]^c  $ and $\text{supp} (\chi_{\frac{\ell(t)}{2}}) \subset [-\ell(t), \ell(t)] $. Therefore, by Remark \ref{rem:local}, we consider
\begin{align}
\label{term1}
    &\int_{3 \sigma_\sharp}^{3 \sigma_\sharp}    \tilde{E}_{I, \ell(t)}(y) E_{O, \ell(t)}(y)    \int_\R \frac{1}{(|b(y) -\lambda|   + \varepsilon^{1/3} |B'(y)|^{2/3})^2} d \lambda dy\\ 
    \lesssim &\int_{3 \sigma_\sharp}^{3 \sigma_\sharp} \frac{\tilde{E}_{I, \ell(t)}(y) E_{O, \ell(t)}(y) }{\varepsilon^{1/3} |B'(y)|^{2/3}  } dy.
\end{align}
Using the evenness of $\tilde{E}_{I, \ell(t)}$ and $ E_{O, \ell(t)}(y)$ it suffices to bound the integral on $[0, 3 \sigma_\sharp]$. 
Strictly speaking, $|B'(y)|$ is not even. However, due to the localization around $\pm \ell(t)$ that we gain from $\tilde{E}_{I, \ell(t)}$ and $E_{O, \ell(t)}$, only the behavior near the critical point is important. Since $|B'(y)| \approx |y|^m + \varepsilon^{\frac{m}{m+3}}$, assuming that $|B'(y)|$ is even does not significantly affect the bounds. Therefore, there is no loss of generality by only considering $y > 0$.  
Decomposing the integral as
\begin{align}
    \int_{0}^{3 \sigma_\sharp}  = \int_{0}^{\ell(t)} + \int_{\ell(t)}^{3 \sigma_\sharp}, 
\end{align}
we can bound the integral over $[0, \ell(t)]$  as 
\begin{align}
    \int_0^{\ell(t)} \frac{\tilde{E}_{I, \ell(t)}(y) }{\varepsilon^{1/3} |B'(y)|^{2/3}  } dy &\approx \int_0^{\ell(t)} \frac{  e^{  -c_{**}  \frac{|B'(\ell)|^{1/3}}{\varepsilon^{1/3}}  |y - \ell(t)|    }}{|B'(y)|^{2/3} |B'(\ell)|^{1/3} } \frac{|B'(\ell)|^{1/3}}{\varepsilon^{1/3}} dy \\
    &\lesssim  \frac{1}{|B'(\ell(t))|} 
    \lesssim  \ell^{-N}(t).
\end{align}
For the integral on the complement we obtain 
\begin{subequations}
\label{outerest}
    \begin{align}
   \int_{\ell(t)}^{3 \sigma_\sharp} \frac{E_{O, \ell(t)}(y)}{\varepsilon^{1/3} |B'(y)|^{2/3} } dy &\approx \int_{\ell(t)}^{3 \sigma_\sharp } \frac{e^{-c_* \left(\frac{|B'(\ell)|^{1/3}}
   {\varepsilon^{1/3}} \right) |y -\ell(t)|}}{\varepsilon^{1/3}|B'(y)|^{2/3} } dy \\
   &\lesssim \frac{1}{ |B'(\ell(t))| } \\
   &\lesssim \ell^{-N}(t).
\end{align}
\end{subequations}
Therefore, 
\begin{align}
   \norm{ \frac{e^{-\sigma_0 \varepsilon^{\frac{N+1}{N+3} } t}}{t} \int_\R \ai(\chi_{\frac{\ell(t)}{2}} f_O ) d \lambda }_{H^{ -1}} \lesssim \ell^{-N}(t)e^{-\sigma_0 \varepsilon^{\frac{N+1}{N+3} } t} \norm{f^{\rm in}}_{H^1},
\end{align}
as desired. 

Finally, we consider the viscous error terms \eqref{m8}. As for the commutator term, it suffices to bound the terms in \eqref{m8} in $L_{y,\lambda }^1$. Differentiating \eqref{m2} with respect to $y$ and then letting the second derivative land on the Airy kernel leads to estimating 
\begin{subequations}
    \begin{align}
    |\p_y f_O(y,\lambda)| &\lesssim  |\ai \left[b' f_O(\cdot, \lambda) \right]|  + |\ai\left[\p_yf_j^{\rm in} (1  - \chi_{j, \ell(t)} )\right]|  + |\ai \left[ f_j^{\rm in} \chi_{j, \ell(t)}' \right]|,\label{ai0.5}\\
    |\p_y^2 f_O(y,\lambda)| &\lesssim  |\p_y\ai \left[b' f_O(\cdot, \lambda) \right]|  + |\p_y \ai\left[\p_yf_j^{\rm in} (1  - \chi_{j, \ell(t)} )\right]|  + |\p_y\ai \left[ f_j^{\rm in} \chi_{j, \ell(t)}' \right]|. \label{ai0.6}
\end{align}
\end{subequations}
Applying the proof of Proposition \ref{ai:bdds} to \eqref{m2} yields
\begin{align}
\label{ai1}
    |\ai\left[\p_yf_j^{\rm in} (1  - \chi_{j, \ell(t)} )\right]| \lesssim \frac{\varepsilon^{-1/4} \norm{\p_y f_j^{\rm in}}_{L^2} }{ (|b(y) - \lambda| + \varepsilon^{1/3} |B'(y)|^{2/3})^{3/4}  },
\end{align}
\begin{align}
\label{ai1.5}
    |\p_y\ai\left[\p_yf_j^{\rm in} (1  - \chi_{j, \ell(t)} )\right]| \lesssim \frac{\varepsilon^{-3/4} \norm{\p_y f_j^{\rm in}}_{L^2} }{ (|b(y) - \lambda| + \varepsilon^{1/3} |B'(y)|^{2/3})^{1/4}  },
\end{align}
\begin{align}
    \label{ai2}
    |\ai \left[ f_j^{\rm in} \chi_{j, \ell(t)}' \right]| \lesssim  \frac{ \ell^{-1}(t)  E_{I, \ell(t) }(y) E_{O, 2 \ell(t) }(y) \norm{f_j^{\rm in}  }_{L^\infty}  }{|b(y) -\lambda| + \varepsilon^{1/3}|B'(y)|^{2/3} },
\end{align}
\begin{align}
    \label{ai2.5}
    |\p_y\ai \left[ f_j^{\rm in} \chi_{j, \ell(t)}' \right]| \lesssim  \frac{ \varepsilon^{-1/2} \ell^{-1}(t)E_{I, \ell(t)}(y) E_{O, 2 \ell(t)}(y) \norm{f_j^{\rm in}   }_{L^\infty}}{(|b(y) -\lambda| + \varepsilon^{1/3}|B'(y)|^{2/3} )^{1/2}},
\end{align}
\begin{align}
\label{ai3}
    |\ai \left[b' f_O(\cdot, \lambda) \right]| \lesssim \frac{ |B'(y)| \norm{f_j^{\rm in}}_{L^\infty}  }{(|b(y) - \lambda| + \varepsilon^{1/3} |B'(y)|^{2/3}  )^2},
\end{align}
\begin{align}
\label{ai3.5}
    |\p_y\ai \left[b' f_O(\cdot, \lambda) \right]| \lesssim \frac{ \varepsilon^{-1/2}|B'(y)| \norm{f_j^{\rm in}}_{L^\infty}  }{(|b(y) - \lambda| + \varepsilon^{1/3} |B'(y)|^{2/3}  )^{3/2}}.
\end{align}
We introduce 
\begin{align}
\label{rho}
    \varrho(y) := \min_{j} (|y-\gamma_j| + \ell(t)).
\end{align}
Using \eqref{rho} we can bound $a$ from \eqref{modd} and its derivatives as 
\begin{align}
    |\p_y^{\alpha }a(t,y)| \lesssim \frac{1}{\varrho^{N+ \alpha}(y)} \lesssim \ell^{-(N + \alpha)}(t), \quad \alpha \in \mathbb{N}.
\end{align}

We start with estimating $\ai( a''(t, \cdot) \p_y f_O (\cdot, \lambda) )$. From  \eqref{ai0.5}, we need to bound
\begin{align}
\label{v11}
    \ai( a''  \ai (b' f_O(\cdot, \lambda) )),  \quad  \ai( a''  \ai ( \p_y f_j^{\rm in}  (1 - \chi_{j, \ell(t)})    )), \quad \ai( a''  \ai(f_j^{\rm in} \chi_{j, \ell(t)}'   )  ). 
\end{align}
We have 
\begin{align}
\label{v22}
    |\ai \left[ a'' \ai(b' f_O) \right]| \lesssim  \frac{\ell^{-2}(t)\norm{f_j^{\rm in}}_{L^\infty}E_{I, \frac{\ell(t)}{2}  }(y) }{(|b(y) - \lambda |   + \varepsilon^{1/3}|B'(y)|^{2/3})^3 }  =: v_1 ,
\end{align}
\begin{align}
\label{v33}
     |\ai( a''  \ai ( \p_y f_j^{\rm in}  (1 - \chi_{j, \ell(t)})    ))| \lesssim \frac{\varepsilon^{-1/4}  \norm{\p_y f_j^{\rm in }}_{L^2} E_{I, \frac{\ell(t)}{2}}(y)  }{\varrho^{N+2}(y) (|b(y) -\ \lambda|  + \varepsilon^{1/3}|B'(y)|^{2/3}  )^{7/4}  } =: v_2, 
\end{align}
and 
\begin{align}
    |\ai( a''  \ai(f_j^{\rm in} \chi_{j, \ell(t)}'   )  )| \lesssim  \frac{ \ell^{-N-3}(t) \norm{f_j^{\rm in}}_{L^\infty} \tilde{E}_{I, \ell(t)}(y) \tilde{E}_{O, 2\ell(t)}(y)    }{(|b(y) - \lambda| + \varepsilon^{1/3}|B'(y)|^{2/3} 
 )^2}  =:v_3.
\end{align}
We now bound $\ai(\varepsilon a' (t, \cdot ) \p_y^2 f_O(\cdot, \lambda)) $. From \eqref{ai0.6}, we need to control
\begin{align}
    \ai( a'  \p_y\ai (b' f_O(\cdot, \lambda) )),  \quad  \ai( a'  \p_y\ai ( \p_y f_j^{\rm in}  (1 - \chi_{j, \ell(t)})    )), \quad \ai( a'  \p_y\ai(f_j^{\rm in} \chi_{j, \ell(t)}'   )  ) . 
\end{align}
We have 
\begin{align}
\label{v44}
    |\ai( a'  \p_y\ai (b' f_O(\cdot, \lambda) ))| \lesssim  \frac{ \varepsilon^{-1/2}\norm{f_j^{\rm in }}_{L^\infty}        E_{I, \frac{\ell(t)}{2}}(y)} { \varrho(y) (|b(y) -\lambda| + \varepsilon^{1/3} |B'(y)|^{2/3}  )^{5/2}  } =: v_4,
\end{align}
\begin{align}
\label{v55}
    |\ai( a'  \p_y\ai ( \p_y f_j^{\rm in}  (1 - \chi_{j, \ell(t)})    ))| \lesssim  \frac{\varepsilon^{-3/4} \norm{\p_y f_j^{\rm in}}_{L^2}  \tilde{E}_{I, \ell(t)}(y)}{ \varrho^{N + 1}(y) (|b(y) - \lambda| + \varepsilon^{1/3} |B'(y)|^{2/3})^{5/4}  } =:v_5 , 
\end{align}
and 
\begin{align}
\label{v66}
    |\ai( a'  \p_y\ai(f_j^{\rm in} \chi_{j, \ell(t)}'   )  )| \lesssim  \frac{ \varepsilon^{-1/2} \ell^{-1}(t)\tilde{E}_{I, \ell(t)}(y) \tilde{E}_{O, 2 \ell(t)}(y) \norm{f_j^{\rm in}   }_{L^\infty}}{ \varrho^{N+1}(y)  (|b(y) -\lambda| + \varepsilon^{1/3}|B'(y)|^{2/3} )^{3/2}} =:v_6.
\end{align}

Obtaining the bounds for $v_j, j \in \{1, ..., 6 \}$ is essentially the same, so we only show  $v_4$ and $v_6$. 
By scaling we can take $\norm{f_j^{\rm in}}_{L^\infty} = 1$. By Remark \ref{rem:local}, it suffices to only check the contribution for  $y \in [-3 \sigma_\sharp, 3 \sigma_\sharp]$ (recall that we have assumed that $\gamma_j = 0$). Starting with $v_4$, we have 
\begin{align}
    \int_{- 3\sigma_\sharp} ^{ 3\sigma_\sharp} \int_\R  \varepsilon v_4\,  d\lambda \, d y &=  \int_{- 3\sigma_\sharp} ^{ 3\sigma_\sharp}  \frac{E_{I, \frac{\ell(t)}{2}}(y) }{\varrho(y)}\int_\R \frac{ \varepsilon^{1/2}         }{  (|b(y) -\lambda| + \varepsilon^{1/3} |B'(y)|^{2/3}  )^{5/2}  } \, d\lambda \, d y \\
   &\lesssim \int_{- 3\sigma_\sharp} ^{ 3\sigma_\sharp} \frac{E_{I, \frac{\ell(t)}{2}}(y)}{ \varrho(y)|B'(y)|} dy \notag \\
   &\lesssim \ell^{-N}(t).  \notag
\end{align}
For $v_6$, 
\begin{align}
    \int_{- 3\sigma_\sharp} ^{ 3\sigma_\sharp}  \int_\R  \varepsilon v_6\,  d\lambda \, d y &= \ell^{-1}(t)\int_{- 3\sigma_\sharp} ^{ 3\sigma_\sharp}  \frac{ \tilde{E}_{I, \ell(t)}(y) \tilde{E}_{O, 2 \ell(t)}(y)}{\varrho^{N+1}(y)}\int_\R \frac{ \varepsilon^{1/2}  }{   (|b(y) -\lambda| + \varepsilon^{1/3}|B'(y)|^{2/3} )^{3/2}}\, dy\, d \lambda  \notag \\
    &\lesssim \ell^{-1}(t)\int_{- 3\sigma_\sharp} ^{ 3\sigma_\sharp}  \frac{ \varepsilon^{1/3}\tilde{E}_{I, \ell(t)}(y) \tilde{E}_{O ,2 \ell(t)}(y)}{|B'(y)|^{1/3} \varrho^{N+1}(y)} dy  \\
    &\lesssim \ell^{-N}(t) \notag
\end{align}
as desired. This completes the proof for $j \geq 1$.


\subsection{Changes needed for $j = 0$}
\label{j=0}
For $j = 0$, we proceed as for the outer solution $f_O(t,y)$ 
with 
\begin{align}
\label{m0.25}
    f_0(t,y) = \frac{e^{ -\sigma_0 \varepsilon^{\frac{N+1}{N+3}}  t}}{2 \pi}\int_{\R} e^{-i \lambda t  } f_0(y, \lambda) \, d \lambda,
\end{align}
where $f_0(y,\lambda)$ satisfies
\begin{align}
\label{m0.5}
    -\varepsilon\p_y^2 f_0(y, \lambda)   -\sigma_0 \varepsilon^{\frac{N+1}{N+3}}f_0(y, \lambda) + i(b(y) - \lambda)f_0(y, \lambda) = f_0^{\rm in}(y).
\end{align}
The most important difference is that we can modify the definition of $a$ in the good derivative \eqref{modd} so that
\begin{align}
    a_0(t,y) := \frac{1 - \chi_{\frac{\ell(0)}{2}}(y) } {b'(y)}
\end{align}
 This makes the bounds for $f_0(t,y)$ as good or better than $f_O(t,y)$. Since the bounds for $f_O(t,y)$ are consistent with Theorem \ref{thm:1}, the proof follows.

\subsection{Comparison with \cite{J23, BCJ24}}
\label{sec:compare}
We highlight some key technical differences between the proof of Theorem \ref{thm:1}  and the results in \cite{J23, BCJ24}. Since we treat any finite order of critical point, the pointwise fundamental solution bounds in Proposition~\ref{k:bdds} are necessarily more general. However, by utilizing the ``effective potential"~\eqref{effective}, we give a unified treatment for any order of critical point. Another point of departure is in the design of the good derivative. We crucially make use of the fact that~\eqref{thed} is \emph{independent} of the spectral parameter $\lambda$ appearing in the representation formula~\eqref{rep}. This allows us to fully exploit the desirable properties of the advection-diffusion equation \eqref{adv:diff} such as the $L^\infty$ enhanced dissipation from Theorem~\ref{edlinf}, which are hard to obtain directly from the representation formula with optimal regularity.

\section{Slowest eigenmodes and asymptotic length scale}
\label{sec:efngluing}

In this section, we prove Theorem~\ref{thm:eigenfunctions}. First, we construct solutions to the eigenvalue problem
\begin{equation}
    \label{eq:eigenvalueproblem}
    [\lambda + ib(y)] f = \varepsilon \p_y^2 f \, .
\end{equation}
In Section~\ref{sec:uniquenesssec}, we asymptotically characterize the spectrum in the spectral `windows' $\{ \Re \lambda \geq - q \varepsilon^{1/2} \}$, $q \geq 1$, and the corresponding eigenfunctions, as exactly those we already constructed. In Section~\ref{sec:asymptoticdescription}, we apply the spectral information to describe the time-asymptotics of solutions to the original problem~\eqref{adv:diff} and thus prove Corollary~\ref{cor:spectralcorollary}. Finally, we prove higher-order in $\varepsilon$ asymptotics for the spectrum in Section~\ref{sec:higherorderasymptotics}.

\subsection{Gluing framework}

We assume the hypotheses of Theorem~\ref{thm:eigenfunctions}. In particular, $b : \T \to \R$ is a shear profile satisfying the standing assumptions with non-degenerate critical points. Let $(\gamma_j)_{j=1}^{J_0}$ be the critical points with $\gamma_1 < \cdots < \gamma_{J_0}$.

We will construct eigenvalues and eigenfunctions associated to a critical point $\gamma_j$, and, without loss of generality, we may assume that $j=1$. At this point, it will be convenient to assume that\footnote{Otherwise, we conjugate the equation: $(\bar{\lambda} - i b(y)) \bar{f} = \varepsilon \p_y^2 \bar{f}$, construct $\bar{\lambda}$ and $\bar{f}$, and conjugate back to obtain $\lambda$ and $f$.}
\begin{equation}
    b''(\gamma_1) > 0 \, .
\end{equation}

Let $\ell > 0$ be an $\varepsilon$-independent cut-off scale satisfying $\ell \leq \min_{1 \leq j \leq J_0} |\gamma_j - \gamma_{j+1}|/16$. We will choose $\ell$ sufficiently small during the course of the proof. Let $0 \leq \chi_I \in C^\infty_0(\R)$ with $\chi_I \equiv 1$ when $|y - \gamma_1| \leq 3\ell/2$ and $\chi_I \equiv 0$ when $|y-\gamma_1| \geq 2\ell$. Let $0 \leq \chi_O \in C^\infty_0(\R)$ with $\chi_O \equiv 1$ when $|y - \gamma_1| \geq \ell$ and $\chi_O \equiv 0$ when $|y - \gamma_1| \leq \ell/2$. Crucially, the cut-offs have non-trivial overlap in the sense that
\begin{equation}
    \label{eq:supportproperty}
\begin{aligned}
{\rm supp} \, \p_y \chi_I \subset \{ \chi_O \equiv 1\} \, , \quad {\rm supp} \, \p_y \chi_O \subset \{ \chi_I \equiv 1 \} \, ,
\end{aligned}
\end{equation}
see Figure~\ref{fig:cutoffs}.

\begin{figure}
    \centering
    \includegraphics[width=0.65\linewidth]{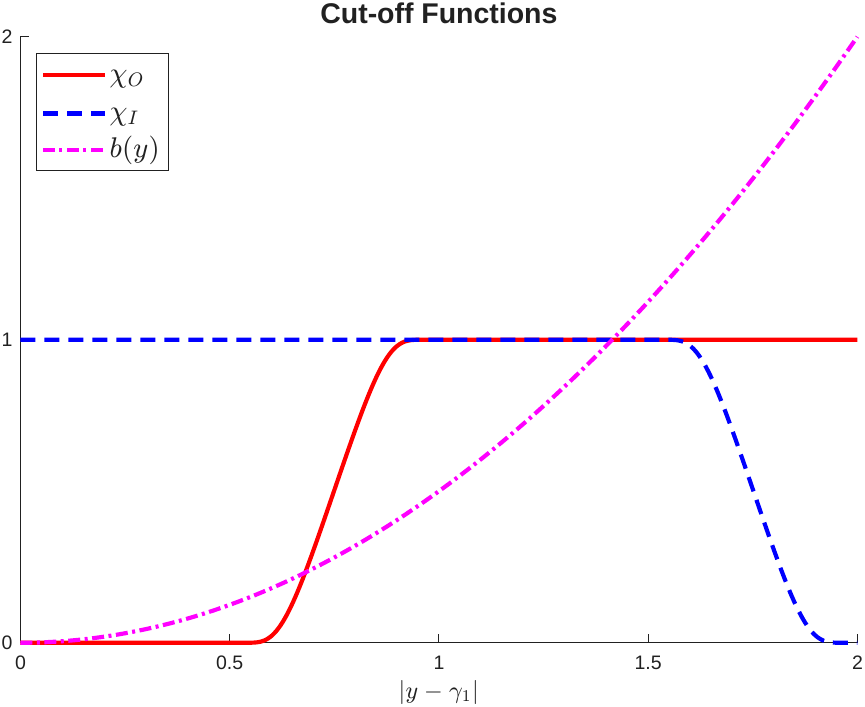}
    \caption{Cut-off functions $\chi_O$, $\chi_I$ with $\ell=1$ demonstrating overlap and shear profile $b(y) = (y-\gamma_1)^2/2$.}
    \label{fig:cutoffs}
\end{figure}

We make the ansatz
\begin{equation}
\label{gluesatz}
    f = f_I \chi_I + g \chi_O \, ,
\end{equation}
where $f_I : \R \to \R$ and $g : \R \to \R$.\footnote{Due to the overlapping of $\chi_I$ and $\chi_O$, the above ansatz is not built from a unique decomposition. This is characteristic of gluing constructions.} We plug this ansatz into~\eqref{eq:eigenvalueproblem}:
\begin{equation}
    \label{eq:originalequalitytoimply}
    [\lambda + ib(y)] (f_I \chi_I + g \chi_O) = \varepsilon \p_y^2 (f_I \chi_I + g \chi_O) \, .
\end{equation}
As we commute $\p_y^2$ through $\chi_I$ and $\chi_O$, we generate additional terms involving derivatives of $\chi_I$ and $\chi_O$. We have the freedom to organize all the terms into a system of two equations, which, if solved, will imply~\eqref{eq:originalequalitytoimply}:
\begin{align}
    \label{equationlabel1}
    (\lambda + ib(y)) f_I &= \varepsilon \p_y^2 f_I + \varepsilon (2 \p_y g \p_y \chi_O + g \p_y^2 \chi_O) \text{ on } {\rm supp} \, \chi_I \\
\label{equationlabel2}
    (\lambda + ib(y)) g &= \varepsilon \p_y^2 g + \varepsilon (2 \p_y f \p_y \chi_I + f \p_y^2 \chi_I) \text{ on } {\rm supp} \, \chi_O \, .
\end{align}
In obtaining~\eqref{equationlabel1}-\eqref{equationlabel2} from~\eqref{eq:originalequalitytoimply}, we have ``divided by" $\chi_I$ and $\chi_O$, respectively. This is possible because, by the support property~\eqref{eq:supportproperty} of $\chi_I$ and $\chi_O$, we have $\chi_I \equiv 1$ on ${\rm supp} \, \p_y \chi_O$ and vice versa. The terms are organized so that $g$ forces $f_I$ and vice versa.

We now extend the equations to the whole space. Let $0 \leq \tilde{\chi}_I \in C^\infty_0(\R)$ be an enlarged cut-off function with $\tilde{\chi}_I \equiv 1$ when $|y - \gamma_1| \leq 3\ell$ and $\tilde{\chi}_I \equiv 0$ when $|y - \gamma_1| \geq 4\ell$. To extend $b(y)|_{{\rm supp} \, \chi_I}$, let
 \begin{equation}
    \label{eq:hereismuhhat}
     \hat{b}(y) := b(\gamma_1) + \frac{b''(\gamma_1)}{2} (y-\gamma_1)^2 + \underbrace{\tilde{\chi}_I \left[ b(y) - b(\gamma_1) - \frac{b''(\gamma_1)}{2} (y-\gamma_1)^2 \right]}_{=: r_1(y)}
 \end{equation}
 be the extended inner potential. 
To extend $b(y)|_{{\rm supp} \, \chi_O}$, we view $b(y)|_{{\rm supp} \, \chi_O}$ as defined on the fundamental domain $[\gamma_1,\gamma_{J_0+1}]$, where $\gamma_{J_0+1} = \gamma_1 + 2\pi$. Let $\tilde{b}(y)$ be a smooth extension of $b(y)|_{{\rm supp} \, \chi_O}$ to $\R$ which is monotone on the extended regions, namely, satisfying $|\tilde{b}'(y)| \gtrsim \ell$ on $\{ y \leq \gamma_1 + \ell \}$ and $\{ y \geq \gamma_{J_0+1} - \ell \}$, as well as $\tilde{b}''(y) = 0$ on $\{ y \leq \gamma_1 \}$ and $\{ y \geq \gamma_{J_0+1} \}$. The extension can be constructed to satisfy $|\tilde{b}'''(y)| \lesssim \ell^{-1}$ on $\{ |y-\gamma_1| \leq \ell\}$ and $\{ |y-\gamma_{J_0+1}| \leq \ell\}$, with implicit constant depending on $b''(\gamma_1)$.

 The extended equations are
 \begin{align}
    \label{eq:gluingoneeqn}
    (\lambda + i\hat{b}(y)) f_I &= \varepsilon \p_y^2 f_I + \varepsilon (2 \p_y g \p_y \chi_O + g \p_y^2 \chi_O) \text{ on } \R \\
\label{eq:gluingtwoeqn}
    (\lambda + i\tilde{b}(y)) g &= \varepsilon \p_y^2 g + \varepsilon (2 \p_y f_I \p_y \chi_O + f_I \p_y^2 \chi_O) \text{ on } \R \, .
\end{align}

In conclusion, \emph{if $(f_I,g)$ satisfies~\eqref{eq:gluingoneeqn}-\eqref{eq:gluingtwoeqn}, then $f$ satisfies~\eqref{eq:eigenvalueproblem}}.

We now introduce a change of variables for equation~\eqref{eq:gluingoneeqn}. First, we write
\begin{equation}
\lambda + i\hat{b}(y) = [\lambda + ib(\gamma_1)] + i [\hat{b}(y) - b(\gamma_1)] \, .
\end{equation}
Second, we define\footnote{It would be more appropriate to write $Y_1$ instead of $Y$, but this would be too notationally heavy.}
\begin{equation}
    Y := \frac{y-\gamma_1}{\ell_1}
\end{equation}
\begin{equation}
    F_I(Y) := f_I(y)
\end{equation}
\begin{equation}
    \Lambda := \tau_1 [\lambda + ib(\gamma_1)] \, ,
\end{equation}
where
\begin{equation}
    \ell_1 := \varepsilon^{1/4} (b''(\gamma_1)/2)^{1/4} =: \tilde{\ell}_1 \varepsilon^{1/4} \, ,\quad \tau_1 := (\varepsilon b''(\gamma_1)/2)^{-1/2} =: \varepsilon^{-1/2} \tilde{\tau}_1 \, .
\end{equation}
We similarly write $G(Y) := g(y)$ and $X_\ast(Y) := \chi_\ast(y)$, $\ast \in \{ I, O \}$, etc. Finally,
\begin{equation}
    B(Y) := \ell_1^{-2} [\hat{b}(y) - b(\gamma_1)] =: Y^2 + \ell_1 R_1(Y) \, ,
\end{equation}
where $R_1(Y) = \ell_1^{-3} r_1(y)$ is the truncation of the third-order Taylor remainder, which implicitly depends on $\varepsilon$.
Recall that ${\rm supp} \, R_1 \subset B_{4\ell/\ell_1}$, which is of size $O(\varepsilon^{-1/4})$; eventually, we will choose $\ell$ small enough so that $\ell_1 R_1 \ll Y^2$. The inner equation~\eqref{eq:gluingoneeqn} now reads
\begin{equation}
    \label{eq:gluingoneeqnrewritten}
    (\Lambda + iY^2+ i \ell_1 R_1(Y)) F_I = \p_Y^2 F_I + 2 \p_Y G \p_Y X_O + G \p_Y^2 X_O \, .
\end{equation}

Let $\alpha \in \N_0$ and
\begin{align}
    \Lambda^{\rm app}(Y) &:= - e^{ i\pi/4} (2\alpha+1) \\
    \Phi^{\rm app}(Y) &:= e^{i\pi/16} G_\alpha(e^{i\pi/8} Y) \, ,
\end{align}
which solve
\begin{equation}
    (\Lambda^{\rm app} + iY^2) \Phi^{\rm app} = \p_Y^2 \Phi^{\rm app} \, .
\end{equation}
Let
\begin{equation}
\label{decomp}
    F_I := \Phi^{\rm app} + \Phi \, , \quad \Lambda := \Lambda^{\rm app} + M \, ,
\end{equation}
and $\lambda = \lambda^{\rm app} + \mu$. The system for $(\Phi,g)$ and the eigenvalue correction $\mu$ is therefore
\begin{align}
\label{eq:innereqforphi}
    &(\Lambda^{\rm app} + M) \Phi + i Y^2 \Phi + i \ell_1 R_1(Y) \Phi + (M + i \ell_1 R_1(Y)) \Phi^{\rm app} \\
    &\quad = \p_Y^2 \Phi + 2\p_Y X_O \p_Y G + \p_Y^2 X_O G \notag \\
\label{eq:outereqforgwithphi}
    &(\lambda^{\rm app} + \mu) g + i \tilde{b}(y) g = \varepsilon \p_y^2 g + \varepsilon [2 \p_y \chi_I \p_y (\phi^{\rm app} + \phi) + (\phi^{\rm app} + \phi) \p_y^2 \chi_I ] \, .
\end{align}
The main operator in~\eqref{eq:innereqforphi} is
\begin{equation}
\mathcal{L} := \p_Y^2 - \Lambda^{\rm app} - i Y^2  \, ,
\end{equation}
since both $M$ and $\ell_1 R_1(Y)$ will be small perturbations when $|M| \ll 1$ and $\ell \ll 1$. This operator has a one-dimensional kernel, spanned by $\Phi^{\rm app}$, and its range is co-dimension one; it will be invertible onto its range if we restrict the domain to functions satisfying the condition\footnote{This is not the unique such condition, as there is not a unique way to mod out the kernel; for example, $\langle \Phi, \Phi^{\rm app} \rangle = 0$ would also do.}
\begin{equation}
    \label{eq:kernelcond}
    \int \Phi \Phi^{\rm app} = \langle \Phi, \bar{\Phi}^{\rm app} \rangle = 0 \, .
\end{equation}
The main operator in~\eqref{eq:outereqforgwithphi} is
\begin{equation}
     i[\tilde{b}(y) - b(\gamma_1)] - \varepsilon \p_y^2 g \, ,
\end{equation}
which will be shown to be invertible by a combination of (i) monotone enhanced dissipation away from the critical points $\gamma_j$, $j \geq 2$, and (ii) different wave speeds $b(\gamma_j) \neq b(\gamma_1)$, $j \geq 2$, at those critical points. (This is where the convenient condition that the wave speeds $b(\gamma_j)$ are distinct is used.) These facts will be established below. \\

To solve the eigenvalue problem, we will apply the \emph{Lyapunov-Schmidt reduction}. The main idea is the following: The inner equation~\eqref{eq:innereqforphi}, considered schematically as $\mathcal{L} \Phi = {\rm RHS}$, is only solvable when the right-hand side belongs to the range of $\mathcal{L}$. This equation can therefore be considered as two equations:
\begin{equation}
    \mathcal{L} \Phi = Q {\rm RHS} \, , \quad (I-Q) {\rm RHS} = 0 \, ,
\end{equation}
where $Q$ is the $L^2$-orthogonal projection onto the range of $\mathcal{L}$,\footnote{equal to the orthogonal complement of the kernel of the adjoint, which is spanned by $\bar \Phi^{\rm app}$. This particular choice of projection $Q$ is convenient, due to the above characterization of the range, but it is not the unique choice. See~\cite[Chapter 1]{kielhöfer2011bifurcation} for an introduction to the Lyapunov-Schmidt reduction.} namely,
\begin{equation}
    \label{eq:defofQ}
    Q F = F - \langle F, \bar{\Phi}^{\rm app} \rangle \bar{\Phi}^{\rm app} / \| \bar{\Phi}^{\rm app} \|_{L^2}^2 \, .
\end{equation}
(By our choice of constraint~\eqref{eq:kernelcond}, we also have $Q\Phi = \Phi$.) The infinite-dimensional equation $\mathcal{L} \Phi = Q {\rm RHS}$, together with the outer equation, will be solvable for \emph{any} small $M$ and $\varepsilon$, to determine a pair $(\Phi,g)(\mu,\varepsilon)$. The remaining equation $(I-Q) {\rm RHS}(\mu,\varepsilon) = 0$ is one-dimensional, living in the cokernel of $\mathcal{L}$, and responsible for determining $\mu = \mu(\varepsilon)$. This reduction from infinite to finite dimensions is precisely the Lyapunov-Schmidt reduction. \\

In conclusion, we can formulate the problem for $(\Phi,g)$ as the system
\begin{subequations}
    \label{eq:totalreduced}
\begin{align}
    &(\Lambda^{\rm app} + M) \Phi + i Y^2 \Phi + i \ell_1  Q (R_3(Y) \Phi) + Q (M + i \ell_1 R_1(Y)) \Phi^{\rm app} \label{eq:innerreduced} \\
    &\qquad = \p_Y^2 \Phi + Q ( 2\p_Y X_O \p_Y G + \p_Y^2 X_O G ) \notag \\
    &\langle \Phi, \bar{\Phi}^{\rm app} \rangle = 0 \label{eq:innerconstraint} \\
    &(\lambda^{\rm app} + \mu) g + i \tilde{b}(y) g = \varepsilon \p_y^2 g + \varepsilon [2 \p_y \chi_I \p_y (\phi^{\rm app} + \phi) + (\phi^{\rm app} + \phi) \p_y^2 \chi_I ] \, , \label{eq:outerreduced}
\end{align}
\end{subequations}
which we expect to be uniquely solvable for $(\Phi,g)(\mu,\varepsilon)$ given any $|\mu| \ll \varepsilon^{1/2}$, together with a one-dimensional reduced equation
\begin{equation}
    \label{eq:onedimreducedeq}
    \begin{aligned}
        &(I-Q) [M + i \ell_1 R_1(Y)] \Phi \\
        &\quad = - (I-Q) [M + i \ell_1 R_1(Y)] \Phi^{\rm app} + (I-Q) ( 2\p_Y X_O \p_Y G + \p_Y^2 X_O G )
    \end{aligned}
\end{equation}
for determining $\mu$.

The Lyapunov-Schmidt method is to solve~\eqref{eq:innerreduced}-\eqref{eq:outerreduced} \emph{first} for $(\Phi,g)(\mu,\varepsilon)$ with arbitrary $\mu$, $\varepsilon$, and subsequently solve~\eqref{eq:onedimreducedeq} for $\mu$. The main term in the reduced equation is $M (I-Q) \Phi^{\rm app}$, which is simply equal to $M$. With this in hand, the reduced equation is equivalent to
\begin{equation}
    \label{eq:onedimreduceinintegralform}
    M + i \ell_1  \langle R_3(Y) \Phi^{\rm app},\bar{\Phi}^{\rm app} \rangle  = - \langle i \ell_1 R_1(Y) \Phi, \bar{\Phi}^{\rm app} \rangle + \langle 2\p_Y X_O \p_Y G + \p_Y^2 X_O G, \bar{\Phi}^{\rm app} \rangle \, ,
\end{equation}
which we solve by the implicit function theorem.

\subsection{The inner equation}
\label{sec:innerequation}

This section is adapted from~\cite[Section 2.1]{AlbrittonOzanskiColumns}; see also~\cite{DaviesComplexHarmonic}.

We consider the operators
\begin{equation}
    L_\zeta := \p_Y^2 - e^{i2\zeta} Y^2 : D(L_0) \subset L^2 \to L^2 \, , \quad \zeta \in (-\pi/2,\pi/2) \, ,
\end{equation}
with dense domain $D(L_0) = \{ F \in H^2 : Y^2 F \in L^2 \}$. The operator $L_0$ is the quantum harmonic oscillator, and we are particularly interested in $L_{\pi/4}$. The above operators are closed with compact resolvent and therefore have discrete spectrum. The spectrum of the $L_0$ consists of algebraically simple eigenvalues
\begin{equation}
    \mu_\alpha = -(2\alpha+1) \, , \quad \alpha \in \N_0 \, ,
\end{equation}
with corresponding eigenfunctions the $L^2$-normalized \emph{Hermite functions}
\begin{equation}
    \label{eq:Hermitefnshey}
    G_\alpha(Y) := \frac{1}{(2^\alpha \alpha! \sqrt{\pi})^{1/2}} e^{-\frac{Y}{2}} H_\alpha(Y) \, , \quad H_\alpha := e^{Y^2} \frac{d^\alpha}{dY^\alpha} e^{-Y^2} \, ,\quad \alpha \in \N_0 \, , \
\end{equation}
Let $Z = e^{i\zeta/2} Y$ (generalized Wick rotation). Formally,
\begin{equation}
    (L_\zeta)_Y = e^{i\zeta} (L_0)_Z \, .
\end{equation}
Therefore, the spectrum of $L_\zeta$ contains eigenvalues
\begin{equation}
    \Lambda_{\alpha,\zeta} = - e^{i\zeta} (2\alpha+1) \, , \quad \alpha \in \N_0 \, ,
\end{equation}
with corresponding eigenfunctions
\begin{equation}
    \label{eq:Phialphazetadef}
    \Phi_{\alpha,\zeta}(Y) = e^{i\zeta/4} G_\alpha(e^{i\zeta/2} Y) \, , \quad \alpha \in \N_0 \, .
\end{equation}
It is known and verified in~\cite{AlbrittonOzanskiColumns} that the above eigenvalues are algebraically simple and exhaust the spectrum. By the definition of~\eqref{eq:Phialphazetadef} and a shift of the integration contour, we have
\begin{equation}
\int_{\R} \Phi_\alpha^2(Y) \, dY = 1 \, .
\end{equation}


For $q_0 \in [0,1)$, define the weight
\begin{equation}
w(q_0,\zeta)(Y) := e^{q_0 Y^2 (\Re e^{i \zeta})/2}
\end{equation}
and weighted norm $\| F \|_{L^2_w} := \| w(q_0,\zeta) F \|_{L^2}$.

The following was essentially verified in~\cite[Lemma 2.2]{AlbrittonOzanskiColumns}.\footnote{There it is only explicitly stated for $\zeta = e^{i\pi/4}$, and the constraint $\int F \bar{\Phi}_{\alpha,\zeta} \, dY$ was used instead of~\eqref{eq:constraintinlemma}.}

\begin{lemma}[Inner problem solvability]
Let $\zeta \in (-\pi/2,\pi/2)$.

For $\Lambda \not\in \sigma(L_{\zeta})$ and $\Phi \in L^2$, the problem
\begin{equation}
    \label{eq:LalphazetaLambda}
(\Lambda - L_{\zeta}) F = \Phi
\end{equation}
has a unique solution $F \in D(L_0)$, and
\begin{equation}
    \label{eq:Fboundholds}
    \| |F| + |\p_Y F| + |\p_Y^2 F| + |Y|^2 F \|_{L^2} \lesssim \| \Phi \|_{L^2} \, .
\end{equation}
The implicit constant can be estimated locally uniformly in $\lambda \in \rho(L_\zeta)$.

For $\Lambda = \Lambda_{\zeta,\alpha} \in \sigma(L_{\zeta})$ and $\Phi \in {\rm range} \, (\Lambda_{\zeta,\alpha} - L_{\zeta})$, the problem~\eqref{eq:LalphazetaLambda} augmented with the condition
\begin{equation}
    \label{eq:constraintinlemma}
    \int F \Phi_{\alpha,\zeta} \, dY = 0
\end{equation}
has a unique solution $F \in D(L_0)$, and~\eqref{eq:Fboundholds} holds.

Let $q_0 \in [0,1)$. In either case, we have the weighted estimates
\begin{equation}
    \label{eq:innerweightedestimates}
    \| |F| + |\p_Y F| + |\p_Y^2 F| + |Y|^2 F \|_{L^2_w} \lesssim \| \Phi \|_{L^2_w} \, ,
\end{equation}
provided that the right-hand side is finite.
\end{lemma}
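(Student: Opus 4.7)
The plan is to reduce the problem to the spectral theory of the quantum harmonic oscillator $L_0$ via analytic dilation in $\zeta$, and then handle the singular case $\Lambda \in \sigma(L_\zeta)$ by the Fredholm alternative. Using the formal identity $L_\zeta = e^{i\zeta}(L_0)_Z$ under the substitution $Z = e^{i\zeta/2}Y$, I first construct $F$ for Schwartz $\Phi$ by expanding the rotated right-hand side $\Phi(e^{-i\zeta/2}Z)$ in the Hermite basis $\{G_\alpha\}$, inverting termwise using the self-adjoint resolvent of $L_0$ at the shifted parameter $e^{-i\zeta}\Lambda$, and rotating the contour back to the real line via Cauchy's theorem (justified by the Gaussian decay of Hermite functions and Schwartz decay of $\Phi$). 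Density of Schwartz in $L^2$ extends the construction, and the bound $\|F\|_{L^2} \lesssim \|\Phi\|_{L^2}$, locally uniform in $\Lambda$, transfers directly from the corresponding resolvent bound for $L_0$. The derivative and moment bounds $\|\p_Y^2 F\|_{L^2}$ and $\|Y^2 F\|_{L^2}$ follow from the equation $(\p_Y^2 - e^{i2\zeta}Y^2)F = \Lambda F - \Phi$ by a standard energy argument: pairing in $L^2$ against $\p_Y^2 F + e^{-i2\zeta}Y^2 F$ and integrating by parts in the cross term $\langle \p_Y^2 F, Y^2 F\rangle$ produces only lower-order norms $\|\p_Y F\|_{L^2}$ and $\|YF\|_{L^2}$, which are absorbed by interpolation. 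The bound on $\|\p_Y F\|_{L^2}$ then follows by interpolation alone.

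At $\Lambda = \Lambda_{\alpha,\zeta}$, I apply the Fredholm alternative. Algebraic simplicity gives $\ker(\Lambda - L_\zeta) = \mathrm{span}(\Phi_{\alpha,\zeta})$. The $L^2$-Hermitian adjoint of $L_\zeta$ is $L_{-\zeta}$, whose kernel at $\overline{\Lambda_{\alpha,\zeta}} = \Lambda_{\alpha,-\zeta}$ is spanned by $\Phi_{\alpha,-\zeta}$. Since $G_\alpha$ is real, $\overline{\Phi_{\alpha,-\zeta}(Y)} = \Phi_{\alpha,\zeta}(Y)$, so the range compatibility condition reduces to $\int \Phi\, \Phi_{\alpha,\zeta}\, dY = 0$, which is precisely the hypothesis $\Phi \in \mathrm{range}(\Lambda - L_\zeta)$. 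The normalization $\int \Phi_{\alpha,\zeta}^2\, dY = 1 \neq 0$ ensures that the kernel direction is transverse to the constraint surface~\eqref{eq:constraintinlemma}, so the constraint selects a unique representative from the affine space of solutions, and~\eqref{eq:Fboundholds} follows from boundedness of the resulting Fredholm pseudoinverse (locally uniform in $\Lambda$ near $\Lambda_{\alpha,\zeta}$ by continuity of the resolvent as a meromorphic function).

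For the weighted estimates, conjugate by the Gaussian weight: set $\tilde F := wF$ and $\tilde \Phi := w\Phi$, so that $(\Lambda - \tilde L_\zeta)\tilde F = \tilde \Phi$ with $\tilde L_\zeta := wL_\zeta w^{-1}$, a second-order operator with complex quadratic coefficients. Because $w$ is entire and Gaussian, conjugation preserves the spectrum and maps $\Phi_{\alpha,\zeta}$ to $w\Phi_{\alpha,\zeta}$, which remains in $L^2$ for $q_0 \in [0,1)$ because the natural eigenfunction decay $e^{-Y^2(\Re e^{i\zeta})/2}$ dominates the weight. Rerunning the first two steps on $\tilde L_\zeta$ --- another Wick rotation to a modified complex harmonic oscillator, Hermite-type spectral expansion, and Fredholm alternative --- yields the same estimates for $\tilde F$, which, unconjugated and combined with interpolation to restore the derivative norms, give~\eqref{eq:innerweightedestimates}.

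The main obstacle is the rigorous justification of the Wick rotation: converting formal algebraic identities on entire functions into genuine $L^2$ resolvent identities via contour deformation, where the boundary contributions must be shown to vanish thanks to Gaussian decay of Hermite functions in the rotated frame. A secondary subtlety is that~\eqref{eq:constraintinlemma} uses the complex bilinear pairing rather than the Hermitian one, so the transversality of the kernel to the constraint must be verified explicitly via $\int \Phi_{\alpha,\zeta}^2 = 1$.
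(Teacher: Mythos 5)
The paper does not actually prove this lemma in-house: it defers to \cite[Lemma 2.2]{AlbrittonOzanskiColumns} with a remark that the source states it only for $\zeta = \pi/4$ and uses the Hermitian constraint $\int F\bar\Phi_{\alpha,\zeta}$ rather than the bilinear one. Your proposal, by contrast, gives a self-contained argument by the expected route: Wick rotation to $L_0$, Fredholm alternative with the bilinear pairing, Gaussian conjugation for weights. The Fredholm bookkeeping is correct and worth having spelled out: $L_\zeta^* = L_{-\zeta}$, the adjoint kernel at $\overline{\Lambda_{\alpha,\zeta}}=\Lambda_{\alpha,-\zeta}$ is $\Phi_{\alpha,-\zeta}$, and $\overline{\Phi_{\alpha,-\zeta}}=\Phi_{\alpha,\zeta}$ converts the Hermitian range condition into the bilinear one $\int\Phi\,\Phi_{\alpha,\zeta}=0$; the transversality of the constraint to the kernel indeed hinges on $\int\Phi_{\alpha,\zeta}^2 = 1 \neq 0$, which you correctly flag as the one place the bilinear pairing must be checked.

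Two steps in the sketch as written need repair. First, the energy pairing you propose does not produce a coercive quantity for all $\zeta\in(-\pi/2,\pi/2)$: testing $L_\zeta F$ against $\p_Y^2 F + e^{-i2\zeta}Y^2 F$ and taking the real part yields
\begin{equation}
\|\p_Y^2 F\|^2 - \cos(4\zeta)\,\|Y^2 F\|^2 + \text{(cross terms)},
\end{equation}
and the coefficient $-\cos(4\zeta)$ is negative for $|\zeta|<\pi/8$ or $|\zeta|>3\pi/8$; since $\|Y^2 F\|^2$ is top order, it cannot be absorbed by interpolation. You either need a different test function phase (chosen as a function of $\zeta$), a reduction to $|\zeta|\le\pi/4$ via the Fourier symmetry swapping $Y$ and $\p_Y$, or you can bypass energy estimates entirely by establishing $D(L_\zeta)=D(L_0)$ once (e.g.\ by maximal sectoriality of $e^{-i\zeta}L_\zeta$) and invoking the closed graph theorem plus analyticity of the resolvent in $\Lambda$ for the locally uniform bound. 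Second, the conjugated operator $\tilde L_\zeta = wL_\zeta w^{-1}$ is \emph{not} another complex harmonic oscillator: with $w=e^{q_0 Y^2(\Re e^{i\zeta})/2}$ one finds a first-order drift term $-2q_0(\Re e^{i\zeta})\,Y\p_Y$. The plan is salvageable — a further Gaussian conjugation $F\mapsto e^{aY^2}F$ with $a=\tfrac12 q_0\Re e^{i\zeta}$ removes the drift and produces $\p_Y^2 - \mu^2 Y^2$ with $\mu^2 = q_0^2(\Re e^{i\zeta})^2 + e^{2i\zeta}$, which avoids the negative real axis for all $q_0\in[0,1)$, $|\zeta|<\pi/2$ — but this extra step should be stated; ``rerunning the Wick rotation'' as written elides the drift and would not compile into a proof.
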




\subsection{Outer problem solvability}

Recall the outer equation
\begin{equation}
    \label{eq:outerproblem}
    (\lambda^{\rm app} + \mu) g + i \tilde{b}(y) g = \varepsilon \p_y^2 g + h \, ,
\end{equation}
where $\tilde{b} : \R \to \R$ is the smooth extension of the shear profile, as discussed below~\eqref{eq:hereismuhhat}, which has been modified to be monotone at $O(1)$ scales around the critical point $\gamma_1$. The estimate below will depend on this modification and $\ell$. (Ultimately, the smallness conditions on $\ell$ will be imposed before the smallness conditions on~$\varepsilon$.)

\begin{proposition}[Solvability of outer system]
    \label{prop:solvabilityofoutersystem}
For $|\mu| \ll 1$ with $|\Re \mu| \ll \varepsilon^{1/3}$ and $h \in L^2(\R)$, there exists a unique solution $g \in H^1(\R)$ to~\eqref{eq:outerproblem}, and it satisfies
\begin{equation}
    \varepsilon^{1/3} \| g \| + \varepsilon^{2/3} \| \p_y g \| \lesssim \| h \| \, . 
\end{equation}
\end{proposition}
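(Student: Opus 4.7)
The plan is to rewrite the equation as $\mathcal{A}_\varepsilon g = h - \mu_R g$, where $\mu_R := \Re(\lambda^{\rm app}+\mu) \in \R$ and
\begin{equation}
\mathcal{A}_\varepsilon := -\varepsilon \p_y^2 + i(\tilde b(y) - \tilde\lambda), \qquad \tilde\lambda := -\Im(\lambda^{\rm app}+\mu) \in \R.
\end{equation}
Since $\lambda^{\rm app} = -ib(\gamma_1)+\varepsilon^{1/2}\Lambda^{\rm app}/\tilde\tau_1$ with $\Lambda^{\rm app} = -e^{i\pi/4}(2\alpha+1)$ and $|\mu|\ll 1$, one has $\tilde\lambda = b(\gamma_1)+O(\varepsilon^{1/2})+O(|\mu|)$ and $|\mu_R| \ll \varepsilon^{1/3}$. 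The target is $\|\mathcal{A}_\varepsilon^{-1}\|_{L^2(\R)\to L^2(\R)} \lesssim \varepsilon^{-1/3}$, after which the derivative estimate and the $\mu_R$-perturbation are routine.

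\emph{Step 1 (non-degenerate critical layers).} I would first verify that every zero of $p(y) := \tilde b(y) - \tilde\lambda$ is transverse with $|\tilde b'(y)|$ bounded below by a positive constant depending only on $b$ and $\ell$. On $\{|y-\gamma_1|\leq \ell\}$, the monotone modification of $\tilde b$ directly provides $|\tilde b'| \gtrsim \ell$. On the rest of the fundamental domain $[\gamma_1,\gamma_{J_0+1}]$, the extension agrees with $b$, whose only critical points are $\gamma_2,\dots,\gamma_{J_0}$; by the distinct wave speeds hypothesis $|b(\gamma_j)-\tilde\lambda|\gtrsim 1$ for $\varepsilon, |\mu|$ small, so zeros of $p$ are bounded away from these critical points and thus occur only where $|b'|$ is bounded below. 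Outside the fundamental domain, $|\tilde b'(y)|\gtrsim \ell$ and $|\tilde b(y)|\to\infty$ by construction.

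\emph{Step 2 (resolvent estimate and closing).} With Step 1 in hand, $\mathcal{A}_\varepsilon$ falls into the ``monotone shear'' regime whose enhanced dissipation rate is $\varepsilon^{1/3}$. I would either (a) invoke a Gearhart--Pr\"uss argument in the style of~\cite{Wei21}, reducing the bound to uniform resolvent control on the imaginary axis, or (b) construct $\mathcal{A}_\varepsilon^{-1}$ from an Airy-type fundamental solution as in Proposition~\ref{k:bdds}, where the effective weight $|B'(y)|$ is now bounded below uniformly at every critical layer so that no degenerate Airy length scale appears. Either route yields $\|\mathcal{A}_\varepsilon^{-1} h\|_{L^2}\lesssim \varepsilon^{-1/3}\|h\|_{L^2}$. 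Testing $\mathcal{A}_\varepsilon g = h$ against $\bar g$ and taking real parts gives $\varepsilon\|\p_y g\|_{L^2}^2 \leq \|h\|_{L^2}\|g\|_{L^2} \lesssim \varepsilon^{-1/3}\|h\|_{L^2}^2$, hence $\varepsilon^{2/3}\|\p_y g\|_{L^2}\lesssim \|h\|_{L^2}$. Finally, $|\mu_R|\|\mathcal{A}_\varepsilon^{-1}\|\ll 1$, so $(\mathcal{A}_\varepsilon+\mu_R)^{-1}$ exists by Neumann series with the same estimates, and uniqueness follows from the resulting injectivity. The hard part is Step 2: Proposition~\ref{k:bdds} is stated on $\T$ whereas $\tilde b$ lives on $\R$ with linear tails, so either the fundamental solution must be re-derived in this non-compact setting, or one must fall back on a hypocoercivity argument compatible with the linear growth of $\tilde b$.
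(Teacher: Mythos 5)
Your Step 1 correctly isolates the key structural observation — because the wave speeds $b(\gamma_j)$ are distinct, every critical layer $\{\tilde b = \tilde\lambda\}$ lies a uniform distance from the critical points $\gamma_j$, $j\geq 2$, hence occurs only where $|\tilde b'|$ is bounded below — and this is indeed what the paper uses. But Step 2 is not yet a proof. The coefficient $\tilde b$ is not globally monotone (it still has the critical points $\gamma_2,\dots,\gamma_{J_0}$), so Proposition~\ref{pro:monotoneresolventestimate} cannot be applied to $\mathcal{A}_\varepsilon$ on the whole line: transversality of the zero set of $p$ is not the same as a lower bound on $\tilde b'$ everywhere, and the latter is what the weighted energy estimate $\int \tilde b\, |\partial_y f|^2$ in Proposition~\ref{pro:monotoneresolventestimate} genuinely needs. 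Your Gearhart--Pr\"uss alternative also does not close the gap, since that mechanism converts a resolvent bound into a semigroup bound, whereas what is needed here \emph{is} a resolvent bound at a single spectral value. Your alternative (b) is the most promising in spirit — under Step 1 the spectral weight $\alpha+\varepsilon^{1/3}|B'(z)|^{2/3}+|b(z)-\lambda|$ from Proposition~\ref{k:bdds} is $\gtrsim \varepsilon^{1/3}$ for every $z$ — but, as you note, it requires re-deriving Appendix~\ref{sec:airy} on $\R$ with linear tails, which is a nontrivial detour.

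The paper's actual proof sidesteps all of this with a more elementary cut-off decomposition. It writes $g$ as a sum of pieces $f_j := \chi_j g$ localized to scale $\ell$ around each $\gamma_j$, $j \geq 2$, together with pieces $g_j := (1-\sum_k \chi_k)\, g$ restricted to each monotone band $[\gamma_j,\gamma_{j+1}]$ and extended by zero off the band. For each $f_j$, the wave-speed gap $\inf_{{\rm supp}\,\chi_j}|\tilde b(y) - b(\gamma_1)| \gtrsim 1$ makes the unweighted energy estimate coercive with an $O(1)$ spectral gap. For each $g_j$, the shear can be extended to a globally monotone $\tilde b_j$, so Proposition~\ref{pro:monotoneresolventestimate} applies directly. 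The cut-off commutator errors are $O(\varepsilon(\|\partial_y g\|+\|g\|))$ and are absorbed after summing. This implements precisely the observation in your Step 1 while never requiring a global resolvent bound for a non-monotone coefficient; you should replace Step 2 with this decomposition.
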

Here and below, $\| \cdot \|$ is shorthand for $\| \cdot \|_{L^2(\R)}$.
\begin{proof}
Let $\mu \in \C$. By ellipticity, the solvability of the equation will follow from the \emph{a priori} estimates and the Fredholm theory, so we focus on the estimate.
Suppose that $g \in H^1(\R)$ is a solution~\eqref{eq:outerproblem} with $h \in L^2(\R)$. The strategy is to apply the monotone enhanced dissipation estimate in Appendix~\ref{sec:monotoneresolventestimate} away from the critical points $\gamma_j$, $j \geq 2$, and otherwise to utilize that $\tilde{b}(\gamma_j) \neq b(\gamma_1)$, $j \geq 2$.

Let $\chi_j$ be cut-off functions localized to scale $\ell$ around the critical points $\gamma_j$, $2 \leq j \leq J_0$. Define
\begin{equation}
f_j := \chi_j g \, , \quad 2 \leq j \leq J_0 \, ,
\end{equation}
which satisfies
\begin{equation}
    [\lambda^{\rm app} + \mu + i \tilde{b}(y)] f_j - \varepsilon \p_y^2 f_j = \varepsilon (2 \p_y \chi_j \p_y g + g \p_y^2 \chi_j) \, .
\end{equation}
We moreover want to obtain functions in the monotone regions, extended to the whole space. For $1 \leq j \leq J_0$, let
\begin{equation}
    g_j :=  (1-\sum_{k=2}^{J_0} \chi_k) g \, , \quad \gamma_j \leq y \leq \gamma_{j+1} \, .
\end{equation}
When $2 \leq j \leq J_0-1$, extend $g_j$ by zero outside $[\gamma_j,\gamma_j+1]$. For $j=1$, $g_1 = g$ for $y \leq \gamma_1$ and zero for $y \geq \gamma_2$. For $j=J_0$, $g_{J_0} = g$ for $y \geq \gamma_{J_0+1}$ and zero for $y \leq \gamma_{J_0}$. Then $g_j$ satisfies
\begin{equation}
    [\lambda^{\rm app} + \mu + i \tilde{b}_j(y)] f_j - \varepsilon \p_y^2 f_j = \varepsilon (2 \p_y \chi_j \p_y g + g \p_y^2 \chi_j) \, ,
\end{equation}
where $\tilde{b}_j$ is a smooth, monotone extension of $\tilde{b}|_{{\rm supp} \, g_j}$ to $\R$. (We could arrange for overlapping cut-offs, but it is not necessary here.)

For $j \geq 2$, we perform the basic energy estimates on $f_j$ (multiply the equation by $\overline{f}_j$ and extract real and imaginary parts):
\begin{equation}
    \label{eq:gjestimate}
    \inf_{{\rm supp} \, \chi_j} |\tilde{b}(y) - b(\gamma_1)| \, \| f_j \| + \varepsilon^{1/2} \| \p_y f_j \| \lesssim (|\lambda^{\rm app} + i b(\gamma_1)| + |\mu|) \| f_j \| + \varepsilon \| |\p_y g| + |g| \| + \| h \| \, .
\end{equation}
The contributions from the eigenvalue can be absorbed into the left-hand side when $|\mu|, \varepsilon \ll 1$.

For $j \geq 1$, we apply the monotone enhanced dissipation estimate~\eqref{eq:enhanceddissmonotone} to $g_j$:
\begin{equation}
    \label{eq:g0estimate}
\begin{aligned}
    \varepsilon^{1/3} \| g_j \| + \varepsilon^{2/3} \| \p_y g_j \| &\lesssim |\Re (\lambda^{\rm app} + \mu)|\, \| g_j \| + \varepsilon \| |\p_y g| + |g| \| + \| h \| \, .
\end{aligned}
\end{equation}
The contributions from the eigenvalue can be absorbed into the left-hand side when $|\Re \mu| \ll \varepsilon^{1/3}$, $\varepsilon \ll 1$.

We sum~\eqref{eq:gjestimate} and~\eqref{eq:g0estimate} to obtain
\begin{equation}
    \varepsilon^{1/3} \| g \| + \varepsilon^{2/3} \| \p_y g \| \lesssim \| h \| \, .
\end{equation}
\end{proof}







\subsection{Projected system}

Recall from~\eqref{eq:innereqforphi}-\eqref{eq:outereqforgwithphi} that
the \emph{projected system} for the remainder $(\Phi,g)$ is
\begin{equation}
    \label{eq:projectedsystem}
\left\lbrace
\begin{aligned}
    &(\Lambda^{\rm app} + MQ) \Phi + i Y^2 \Phi + i \ell_1  Q(R_3(Y) \Phi) = \p_Y^2 \Phi \\
    &\quad- Q(M\Phi^{\rm app} + i\ell_1 R_1(Y) \Phi^{\rm app})
     + Q(2\p_Y X_O \p_Y G + \p_Y^2 X_O G) \\
    &\int \Phi \Phi^{\rm app} \, dY = 0 \\
    &(\lambda^{\rm app} + \mu) g + i \tilde{b}(y) g = \varepsilon \p_y^2 g + \varepsilon [2 \p_y \chi_I \p_y (\phi^{\rm app} + \phi) + \p_y^2 \chi_I \p_y (\phi^{\rm app} + \phi)] \, .
    \end{aligned}
    \right.
\end{equation}

\begin{remark}
    One could also work with the condition $\int \Phi \overline\Phi^{\rm app} \, dY = 0$, as was done in~\cite{AlbrittonOzanskiColumns}. However, this would not be as convenient for the reduced equation~\eqref{eq:onedimreducedeq}.
\end{remark}

\begin{proposition}[Solvability of projected system]
    \label{pro:solvabilityprojectedsystem}
Let $\ell \ll 1$. Let $q_0 \in [0,1)$. Let $|M|, \varepsilon \ll 1$. Then there exists a unique solution
\begin{equation}
(\Phi,g)(\mu,\varepsilon) \in H^2_w(\R) \times H^1(\R)
\end{equation}
to the projected system~\eqref{eq:projectedsystem}. The solution satisfies
\begin{equation}
\label{errorbdds}
    \| \Phi \|_{H^2_w} \lesssim |M| + |\varepsilon|^{1/4} \, , \quad \| g \|_{H^1} \lesssim e^{-C^{-1}q_0 (\ell/\varepsilon^{1/4})^2} \, .
\end{equation}
\end{proposition}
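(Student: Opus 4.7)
The plan is to construct $(\Phi,g)$ by a Banach fixed-point iteration in $X := H^2_w(\R) \times H^1(\R)$. Given $(\Phi_\ast, g_\ast) \in X$, define $\mathcal{T}(\Phi_\ast, g_\ast) = (\tilde\Phi, \tilde g)$ by solving the two linear equations separately, treating the coupling and the $(\Phi_\ast, g_\ast)$-linear perturbations as forcing: $\tilde\Phi$ is obtained from the inner problem solvability lemma of Section~\ref{sec:innerequation} applied to
\begin{equation*}
(\Lambda^{\rm app} - L_{\pi/4}) \tilde\Phi = Q\bigl[ M(\Phi_\ast - \Phi^{\rm app}) + i\ell_1 R_1(Y)(\Phi_\ast - \Phi^{\rm app}) + 2\p_Y X_O \p_Y G_\ast + \p_Y^2 X_O G_\ast \bigr]
\end{equation*}
with constraint $\int \tilde\Phi \Phi^{\rm app}\,dY = 0$, while $\tilde g$ is obtained from Proposition~\ref{prop:solvabilityofoutersystem} applied to the outer equation with forcing $\varepsilon[2\p_y\chi_I\p_y(\phi^{\rm app}+\phi_\ast) + \p_y^2\chi_I(\phi^{\rm app}+\phi_\ast)]$.

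The next step is to estimate each piece of the forcing in the appropriate norm. The terms proportional to $M$ are controlled by $|M|\cdot(\|\Phi_\ast\|_{H^2_w}+\|\Phi^{\rm app}\|_{L^2_w})$, both $O(1)$. The $\ell_1 R_1$ contributions exploit the Taylor remainder bound $|R_1(Y)|\lesssim |Y|^3 \mathbf{1}_{|Y|\leq 4\ell/\ell_1}$: the support truncation together with the weighted inner estimate $\|Y^2\Phi_\ast\|_{L^2_w}\lesssim \|\Phi_\ast\|_{H^2_w}$ gives $\|\ell_1 R_1 \Phi_\ast\|_{L^2_w}\lesssim \ell\|\Phi_\ast\|_{H^2_w}$, while the Gaussian decay of $\Phi^{\rm app}$ gives $\|\ell_1 R_1 \Phi^{\rm app}\|_{L^2_w}\lesssim \ell_1 = \varepsilon^{1/4}$. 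For the outer forcing, the explicit Hermite bound $|\Phi^{\rm app}(Y)|\lesssim e^{-c|Y|^2}$ (with $c = \sqrt{2}/4$) together with the Sobolev-type pointwise bound $|\Phi_\ast(Y)|\lesssim e^{-c q_0|Y|^2}\|\Phi_\ast\|_{H^2_w}$ combine with the fact that $\mathrm{supp}\,\p_y\chi_I\subset \{|y-\gamma_1|\geq 3\ell/2\}$ to give $\|h\|_{L^2}\lesssim \varepsilon^{3/4}\exp(-9 c q_0(\ell/\ell_1)^2/4)(1+\|\Phi_\ast\|_{H^2_w})$, and Proposition~\ref{prop:solvabilityofoutersystem} delivers the desired bound on $\tilde g$ once $|\Re\mu|\ll\varepsilon^{1/3}$.

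The main obstacle is the coupling term $Q(2\p_Y X_O \p_Y G_\ast + \p_Y^2 X_O G_\ast)$ in the inner forcing: $\p_Y X_O$ is supported at $|Y|\leq \ell/\ell_1$, where the weight $w$ reaches $\exp(c q_0(\ell/\ell_1)^2)$, which naively swamps any control on $g_\ast$. The resolution is the overlapping-cut-off design~\eqref{eq:supportproperty}: $\mathrm{supp}\,\p_y\chi_I$ lies at distance $\geq 3\ell/2$ from $\gamma_1$, strictly larger than the distance $\leq\ell$ containing $\mathrm{supp}\,\p_y\chi_O$. Consequently, on the roundtrip $\Phi\mapsto g\mapsto\tilde\Phi$, the Gaussian decay of $g_\ast$ inherited from $\Phi_\ast$ across $\mathrm{supp}\,\p_y\chi_I$ beats the weight growth on $\mathrm{supp}\,\p_Y X_O$, producing a net exponentially small contraction factor of the form $\varepsilon^{1/12}\exp(-5 c q_0(\ell/\ell_1)^2/4)$.

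Assembling these estimates shows that $\mathcal{T}$ maps the closed ball
\begin{equation*}
B := \bigl\{(\Phi,g) \in X \,:\, \|\Phi\|_{H^2_w} \leq C_1(|M|+\varepsilon^{1/4}),\ \|g\|_{H^1} \leq C_2 e^{-C^{-1} q_0(\ell/\varepsilon^{1/4})^2}\bigr\}
\end{equation*}
into itself provided $\ell$, $|M|$, and $\varepsilon$ are chosen sufficiently small in that order (the $\ell\|\Phi_\ast\|_{H^2_w}$ perturbation is absorbed by taking $\ell$ small first). Contraction follows from the same estimates applied to the difference $\mathcal{T}(\Phi_1,g_1)-\mathcal{T}(\Phi_2,g_2)$, since the $\Phi^{\rm app}$-source terms cancel and the remaining linear maps have norm bounded by $|M|+\ell+\varepsilon^{1/12}\exp(-5 c q_0(\ell/\ell_1)^2/4)$. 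The unique fixed point of $\mathcal{T}$ in $B$ is the desired $(\Phi,g)(\mu,\varepsilon)$ with the stated bounds.
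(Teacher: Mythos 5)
Your proposal is correct and takes essentially the same route as the paper: the paper solves the projected system by writing it as $(I + \text{off-diagonal coupling})(\Phi,g) = (L_I^{-1}B_1, L_O^{-1}B_2)$ and inverting via a smallness condition on the roundtrip operator norm, which is formally identical to your Picard iteration for the affine map $\mathcal{T}$. The key mechanism in both arguments is the observation that $\|w\|_{L^\infty(\operatorname{supp}\p_Y X_O)} \cdot \|1/w\|_{L^\infty(\operatorname{supp}\p_Y X_I)} \leq 1$ thanks to the overlapping cut-off design~\eqref{eq:supportproperty}, combined with the positive residual power of $\varepsilon$ from the change of variables and the outer resolvent gain; the specific $\varepsilon$-exponents you record differ slightly from the paper's but both land strictly positive, which is all the argument requires.
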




\begin{proof}
The cut-off scale $\ell$ is chosen to satisfy $\ell_1 |R_1| \ll Y^2$, and eigenvalue upper bound is chosen small enough, to ensure that the operator
\begin{equation}
    L_I := (\Lambda^{\rm app} + M) + i Y^2 + i \ell_1  Q \circ R_1 - \p_Y^2 : D(L_0) \cap \{ (I-Q)\Phi = 0 \} \to {\rm range} \, Q \subset L^2
\end{equation}
is invertible. $L_I$ consists of a small perturbation $M + i \ell_1  Q \circ R_1$ to the invertible operator $\Lambda^{\rm app} + i Y^2 - \p_Y^2$, so $L_I$ is invertible and, for each fixed $\varepsilon$, depends analytically on $M$. Let $L_O^{-1}$ be the solution operator of the outer equation~\eqref{eq:outerproblem}, which also depends analytically on $\mu$ for each fixed $\varepsilon$. The projected equation can therefore be rephrased as
\begin{equation}
    \label{eq:projectedequationmatrix}
    \left( I + \begin{bmatrix}
        & L_I^{-1} A_{12} \\
        L_O^{-1} A_{21} &
    \end{bmatrix} \right) \begin{bmatrix}
        \Phi \\
        g
    \end{bmatrix}= \begin{bmatrix}
        L_I^{-1} B_1 \\
        L_O^{-1} B_2
    \end{bmatrix} \, ,
\end{equation}
where
\begin{equation}
    A_{12}g = -Q(2\p_Y X_O \p_Y G + \p_Y^2 X_O G)
\end{equation}
\begin{equation}
    A_{21} \Phi = - \varepsilon (\p_y^2 \chi_I \phi + 2 \p_y \chi_I \p_y \phi)
\end{equation}
\begin{equation}
    B_1 = - Q(M\Phi^{\rm app} + i\ell_1 R_1(Y) \Phi^{\rm app})
\end{equation}
\begin{equation}
    B_2 = \varepsilon (\p_y^2 \chi_I \phi^{\rm app} + 2 \p_y \chi_I \p_y \phi^{\rm app}) \, .
\end{equation}
The equation~\eqref{eq:projectedequationmatrix} will be solvable provided that
\begin{equation}
    \label{eq:similarbuteasiermyass}
\| L_I^{-1} A_{12} \|_{H^1(\R) \to H^2_w} \| L_O^{-1} A_{21} \|_{H^2_w \to H^1(\R)} \ll 1 \, .  
\end{equation}
Below, we prove the desired \emph{a priori} estimate on $(\Phi,g)$, with the understanding that a similar computation would verify~\eqref{eq:similarbuteasiermyass}. First,
    \begin{equation}
        \label{eq:innerestimateforprojected}
    \begin{aligned}
        \| \Phi \|_{H^2_w} \lesssim 
        \underbrace{\| (|M| + \ell_1 |R_1|) \Phi^{\rm app} \|_{L^2_w}}_{\lesssim |M| + \varepsilon^{1/4}} + \| w \|_{L^\infty({\rm supp}\, \p_Y X_O) } \varepsilon^{-1/8}  (\varepsilon^{1/4} \ell^{-1} \| \p_y g \| + \varepsilon^{1/2} \ell^{-2} \| g \|)
    \end{aligned}
    \end{equation}
    \begin{equation}
        \label{eq:outerestimateforprojected}
    \begin{aligned}
        \varepsilon^{1/3} \| g \| + \varepsilon^{2/3} \| \p_y g \| &\lesssim \varepsilon \| \ell^{-1} (|\p_y \phi^{\rm app}| + |\p_y \phi|) + \ell^{-2} (|\phi^{\rm app}| + |\phi|) \|_{L^2({\rm supp} \, \p_y \chi_I )} \\
        &\lesssim \varepsilon \| \frac{1}{w} \|_{L^\infty({\rm supp}\, \p_Y X_I) } \ell^{-2} \varepsilon^{1/8} (O(\varepsilon^{\infty}) + \| \Phi \|_{H^2_w})
    \end{aligned}
    \end{equation}
We substitute the estimate in~\eqref{eq:outerestimateforprojected} into~\eqref{eq:innerestimateforprojected} to obtain
\begin{equation}
\begin{aligned}
    &\| \Phi \|_{H^2_w} \lesssim |M| + \varepsilon^{1/4} + \| w \|_{L^\infty({\rm supp}\, \p_Y X_O) } \| \frac{1}{w} \|_{L^\infty({\rm supp}\, \p_Y X_1) } \times \\
    &\quad \times \varepsilon^{-1/8} (\varepsilon^{1/4} \varepsilon^{-2/3} \ell^{-1} + \varepsilon^{1/2} \varepsilon^{-1/3} \ell^{-2}) \varepsilon^{9/8} (O(\varepsilon^{\infty}) + \| \Phi \|_{H^2_w}) \, .
\end{aligned}
\end{equation}
Since\footnote{When $q_0 > 0$ in the Gaussian weight $w$, this leads to a big gain of $e^{-C^{-1}q_0 \varepsilon^{-1/2}}$, which is not necessary to close the estimates. Moreover, in the uniqueness proof, the roles of $\chi_I$ and $\chi_O$ will be switched, and it will be more convenient to work with the $q=0$ space.}
\begin{equation}
     \| w \|_{L^\infty({\rm supp}\, \p_Y X_O) } \| \frac{1}{w} \|_{L^\infty({\rm supp}\, \p_Y X_1) } \leq 1
\end{equation}
and the power of $\varepsilon$ is positive, we obtain
\begin{equation}
    \| \Phi \|_{H^2_w} \lesssim |M| + \varepsilon^{1/4} \, .
\end{equation}
Substituting this into~\eqref{eq:outerestimateforprojected} and using the gain from the weight when $q_0 > 0$, we arrive at
\begin{equation}
    \| g \|_{H^1} \lesssim e^{-C^{-1} q_0 (\ell/\varepsilon^{1/4})^2} \, ,
\end{equation}
as desired. \end{proof}

\subsection{Reduced system}
For $|M|, \varepsilon \ll 1$, we consider the solution $(\Phi,g)(\mu,\varepsilon)$ to the projected system. We aim to solve the one-dimensional reduced equation
~\eqref{eq:onedimreduceinintegralform}
\begin{equation}
    M + i \ell_1  \langle R_1(Y) \Phi^{\rm app},\bar{\Phi}^{\rm app} \rangle  = - i \ell_1 \langle R_1(Y) \Phi, \bar{\Phi}^{\rm app} \rangle + \langle 2\p_Y X_O \p_Y G + \p_Y^2 X_O G, \bar{\Phi}^{\rm app} \rangle \, ,
\end{equation}
for the eigenvalue correction $M(\varepsilon)$. We denote the right-hand side by $Z(M,\varepsilon)$; we already established that its dependence on $M$ is analytic for each fixed $\varepsilon$. We have
\begin{equation}
    \label{eq:solvemebyIFT}
    M + Z(M,\varepsilon) = O(\varepsilon^{1/4}) \, ,
\end{equation}
where $|Z(M,\varepsilon)| \lesssim \varepsilon^{1/4}$, and the new right-hand is independent of $M$. By the implicit function theorem,~\eqref{eq:solvemebyIFT} is uniquely solvable for small $M$ when $\varepsilon \ll 1$, and the solution is of size $O(\varepsilon^{1/4})$.\footnote{More specifically, since $Z$ is analytic in $M$, the bound $|Z(M,\varepsilon)| \lesssim \varepsilon^{1/4}$ on a ball $B_{2c}$ grants a bound $|\p_M Z(M,\varepsilon)| \lesssim \varepsilon^{1/4}$ on a ball $B_{c}$, so that the region of applicability of the inverse function theorem a lower bound which is uniform in small $|M|, \varepsilon$.}

In conclusion, we have proven the following theorem:

\begin{theorem}
    Let $\ell \ll 1$. There exists $c_0 > 0$ such that, for all $\varepsilon \ll 1$ and $q_0 \in [0,1)$, there exists a unique solution
    \begin{equation}
        (\Phi,g,M)(\varepsilon) \in H^2_w(\R) \times H^1(\R) \times B_{c_0} \, .
    \end{equation}
    to the gluing system~\eqref{eq:totalreduced}. The solution satisfies
    \begin{equation}
        \label{eq:sizeestimatesforgluing}
        \| \Phi \|_{H^2_w} \lesssim \varepsilon^{1/4} \, , \quad \| g \|_{H^1} \lesssim e^{-C^{-1}q_0 (\ell/\varepsilon^{1/4})^2} \, , \quad |M| \lesssim \varepsilon^{1/4} \, .
    \end{equation}
\end{theorem}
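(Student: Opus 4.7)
The plan is to close the Lyapunov--Schmidt reduction developed in this section by combining the two solvability results established just above. First, I fix $M \in B_{c_0}$, with $c_0$ small to be chosen, and invoke Proposition~\ref{pro:solvabilityprojectedsystem} to produce the unique pair $(\Phi,g)(M,\varepsilon) \in H^2_w(\R) \times H^1(\R)$ solving the projected system~\eqref{eq:projectedsystem}. Uniformly over $M \in B_{c_0}$, this pair obeys
\begin{equation}
\| \Phi \|_{H^2_w} \lesssim |M| + \varepsilon^{1/4}, \qquad \| g \|_{H^1} \lesssim e^{-C^{-1} q_0 (\ell/\varepsilon^{1/4})^2}.
\end{equation}
Since the solution operators $L_I^{-1}$ and $L_O^{-1}$ underlying the matrix formulation~\eqref{eq:projectedequationmatrix} depend analytically on $M$ for each fixed $\varepsilon$, and the Neumann series generated by~\eqref{eq:similarbuteasiermyass} converges uniformly on $B_{c_0}$, the map $M \mapsto (\Phi,g)(M,\varepsilon)$ is analytic on that ball.

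Next, I substitute $(\Phi,g)(M,\varepsilon)$ into the scalar reduced equation~\eqref{eq:onedimreduceinintegralform} and rewrite it as the fixed-point problem
\begin{equation}
M \;=\; \Psi(M,\varepsilon) \;:=\; - i\ell_1 \langle R_1 \Phi^{\rm app}, \bar\Phi^{\rm app}\rangle - i\ell_1 \langle R_1 \Phi, \bar\Phi^{\rm app}\rangle + \langle 2\p_Y X_O \p_Y G + \p_Y^2 X_O G, \bar\Phi^{\rm app}\rangle.
\end{equation}
The right-hand side is analytic in $M$ and of size $O(\varepsilon^{1/4})$ independently of $M \in B_{c_0}$: the first term contributes $O(\ell_1) = O(\varepsilon^{1/4})$, the second contributes $O(\ell_1 \| \Phi \|_{H^2_w}) = O(\varepsilon^{1/4}(|M|+\varepsilon^{1/4}))$, and the third is exponentially small because $\p_Y X_O$ is supported where $|Y| \gtrsim \ell/\ell_1 \sim \varepsilon^{-1/4}$, precisely the region in which $\bar\Phi^{\rm app}$ is Gaussian-small. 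A Cauchy estimate on a slightly smaller disc then yields $|\p_M \Psi(M,\varepsilon)| \lesssim \varepsilon^{1/4}$, so $M \mapsto \Psi(M,\varepsilon)$ is a strict contraction on the disc $\{|M| \leq C_0 \varepsilon^{1/4}\}$ once $\varepsilon$ is small enough. The Banach fixed-point theorem (or, equivalently, the implicit function theorem) then produces the unique fixed point $M(\varepsilon)$ with $|M(\varepsilon)| \lesssim \varepsilon^{1/4}$.

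Feeding $M(\varepsilon)$ back into the bound of Proposition~\ref{pro:solvabilityprojectedsystem} improves the $\Phi$ estimate to $\|\Phi\|_{H^2_w} \lesssim \varepsilon^{1/4}$, while the $g$ estimate is inherited verbatim, giving~\eqref{eq:sizeestimatesforgluing}. Uniqueness within $H^2_w(\R) \times H^1(\R) \times B_{c_0}$ follows at once: any triple satisfying~\eqref{eq:totalreduced} automatically satisfies both the projected system (uniquely determining $(\Phi,g)$ from $M$ by Proposition~\ref{pro:solvabilityprojectedsystem}) and the reduced equation (uniquely determining $M$ by the contraction argument). The main subtlety I anticipate is arranging the smallness hierarchy correctly: $\ell$ must be fixed first, small enough that $\ell_1|R_1| \ll Y^2$ on the relevant $Y$-scales and that the monotone enhanced dissipation of Proposition~\ref{prop:solvabilityofoutersystem} is uniformly applicable on $\operatorname{supp} g$; then $c_0$ must be small enough that $L_I$ remains invertible and~\eqref{eq:similarbuteasiermyass} holds; only afterward can $\varepsilon$ be taken small, with all implicit constants depending on $\ell$ and $c_0$ but independent of $\varepsilon$ and $q_0$.
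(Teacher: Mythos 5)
Your proposal matches the paper's proof step for step: for each $M \in B_{c_0}$ you invoke Proposition~\ref{pro:solvabilityprojectedsystem} to produce $(\Phi,g)(M,\varepsilon)$, substitute into the scalar reduced equation~\eqref{eq:onedimreduceinintegralform}, and use analyticity in $M$ plus a Cauchy estimate to obtain a contraction on a disc of radius $O(\varepsilon^{1/4})$, which the paper packages as the implicit function theorem with the Cauchy estimate relegated to a footnote. Feeding $|M(\varepsilon)| \lesssim \varepsilon^{1/4}$ back into the projected-system bound to sharpen $\|\Phi\|_{H^2_w} \lesssim \varepsilon^{1/4}$, and the concluding uniqueness argument, are identical; your added remarks on the ordering of smallness parameters $\ell$, $c_0$, $\varepsilon$ only make explicit what the paper leaves implicit.
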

Upon rescaling,~\eqref{eq:sizeestimatesforgluing} implies the size assertions~\eqref{eq:evalremainderass} and~\eqref{evalexp} in Theorem~\ref{thm:eigenfunctions}. Also, the smallness requirement on $\varepsilon$ does not depend on $q_0$; the construction can be done with a single $q_0 \in [0,1)$ and the remaining weighted estimates obtained \emph{a posteriori}.






\subsection{Uniqueness}
\label{sec:uniquenesssec}
We now prove the uniqueness assertion in Theorem~\ref{thm:eigenfunctions}. Let $q \geq 1$ be fixed, $\varepsilon_k \to 0^+$, and $\lambda_k$ be in the spectrum of $L_{\varepsilon_k}$ with $\Re \lambda_k \in [-q \varepsilon_k^{1/2},0]$. By elementary energy estimates, we have that $\lambda_k$ must be in the range of $-i b$, and we therefore consider any subsequence (without relabeling) such that $\Im \lambda_k$ is converging.

First, we demonstrate
\begin{quote}
    (I) There exists $j$ such that $\Im \lambda_k \to - b(\gamma_j)$.
\end{quote}
Without loss of generality, we may suppose $j=1$.
\begin{proof}[Proof of (I)]
For the sake of contradiction, suppose instead that $\Im \lambda_k \to - b(\gamma)$ for some $\gamma \in \T$ and $b'(\gamma) \neq 0$. One may argue similarly to the proof of Proposition~\ref{prop:solvabilityofoutersystem}: Cut the function into the near-critical point regions (where the inverse of the operator $\lambda + ib(y) - \varepsilon \p_y^2$ is estimated because the imaginary part of the eigenvalue deviates from each $b(\gamma_j)$, $j \geq 2$, by $O(1)$) and the remaining regions (where the inverse of the operator $i[b(y) + \Im \lambda_k] - \varepsilon \p_y^2$ is estimated by monotone enhanced dissipation, and $\Re \lambda_k$ is considered as a perturbation).
\end{proof}

Second, we demonstrate
\begin{quote}
    (II) $\Lambda_k := \tilde{\tau}_1 \varepsilon_k^{-1/2} (\lambda_k + ib(\gamma_1))$ remains bounded.
\end{quote}

    For this and the remainder of the proof, we require a decomposition of the solution. Define
\begin{equation}
    \label{eq:reversegluingdecomposition}
    f_I := \chi_I f \, , \quad g := \chi_O f \, , 
\end{equation}
where we omit the dependence on $k$. Then $(f_I,g)$ satisfy
\begin{equation}
    \label{eq:reversegluingsystem}
\begin{aligned}
    \Lambda_k F_I + i Y^2 F_I + i \ell_1 R_1(Y) F_I &= \p_Y^2 F_I - 2\p_Y X_I \p_Y G - \p_Y^2 X_I G \\
\lambda_k g + i \tilde{b}(y) g &= \varepsilon \p_y^2 g - \varepsilon (2 \p_y \chi_O \p_y f_I + \p_y^2 \chi_O f_I) \, .
\end{aligned}
\end{equation}
This system is akin to the previous gluing system~\eqref{eq:gluingoneeqnrewritten} and~\eqref{eq:gluingtwoeqn} except that the cut-off functions $\chi_I$ and $\chi_O$ have switched roles on the right-hand side and the signs of the error terms are reversed. To obtain~\eqref{eq:reversegluingsystem}, we use the overlap property that $f = g$ on ${\rm supp} \, \p_y \chi_I$ and $f = f_I$ on ${\rm supp} \, \p_y \chi_O$.

\begin{proof}[Proof of (II)]
    For the sake of contradiction, suppose instead that there exists a subsequence (without relabeling) such that $|\Im \lambda_k + b(\gamma_1)|/\varepsilon_k^{1/2} \to +\infty$.
    
\begin{lemma}
    \label{lem:auxedlem}
For $A \in \R$ and $R : \R \to \R$ satisfying
\begin{equation}
    \label{eq:derivativeboundonR}
    |R(Y)| \ll \langle Y \rangle^2 \, , \quad |\p_Y R(Y)| \lesssim \langle Y \rangle \, ,
\end{equation}
the solution $F \in H^2$ to the equation
\begin{equation}
    [i (Y^2 + R(Y) - A) - \p_Y^2] F = H \in L^2
\end{equation}
satisfies
\begin{equation}
    \| F \|_{L^2} \lesssim \langle A \rangle^{-1/3} \| H \|_{L^2} \, .
\end{equation}
\end{lemma}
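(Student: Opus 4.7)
My plan is to exploit the Airy-type structure of $\mathcal L := -\p_Y^2 + i(V-A)$, $V := Y^2 + R$. Under the hypotheses on $R$, for $\langle A\rangle \gg 1$ the turning points of $V - A$ lie near $\pm\sqrt A$, with $|V'| \sim \sqrt A$ there; the local Airy length scale is therefore $|V'|^{-1/3} \sim A^{-1/6}$, and the associated effective spectral gap $|V'|^{2/3} \sim A^{1/3}$ is the source of the $\langle A\rangle^{-1/3}$ factor in the claimed bound.

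The cleanest route is to reduce the problem to the semiclassical Airy setting of Proposition~\ref{k:bdds} via the rescaling $Y = \langle A\rangle^{1/2}\tilde Y$. Setting $\tilde F(\tilde Y) := F(\langle A\rangle^{1/2}\tilde Y)$, $\tilde H(\tilde Y) := \langle A\rangle^{-1} H(\langle A\rangle^{1/2}\tilde Y)$, and $\tilde V(\tilde Y) := \tilde Y^2 + \langle A\rangle^{-1} R(\langle A\rangle^{1/2}\tilde Y)$, the equation becomes
\begin{equation*}
-\varepsilon \p_{\tilde Y}^2 \tilde F + i(\tilde V(\tilde Y) - \sgn A) \tilde F = \tilde H, \qquad \varepsilon := \langle A\rangle^{-2},
\end{equation*}
where the hypotheses on $R$ give $\tilde V(\tilde Y) = \tilde Y^2 (1+o(1))$ uniformly on compact sets. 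This is exactly the Airy setting of Proposition~\ref{k:bdds} with $N = 1$, a non-degenerate critical point of $\tilde V$ near $\tilde Y = 0$, and non-critical turning points at $\tilde Y \approx \pm 1$, adapted from $\T$ to $\R$ (a cosmetic change, as the infinite-volume setting removes periodic matching and coercivity at infinity is automatic from $\tilde V \to \infty$). Invoking the pointwise kernel bound and Schur's test yields $\|\tilde F\|_{L^2} \les \varepsilon^{-1/3} \|\tilde H\|_{L^2}$, with the supremum in Schur attained at the turning points where $|\tilde V'| \sim 1$; undoing the rescaling via $\|F\|_{L^2(Y)} = \langle A\rangle^{1/4} \|\tilde F\|_{L^2(\tilde Y)}$ and $\|H\|_{L^2(Y)} = \langle A\rangle^{5/4} \|\tilde H\|_{L^2(\tilde Y)}$ produces the claimed bound.

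The case $\langle A\rangle \les 1$ is handled by standard coercivity: since $V - A$ is bounded below and grows at infinity, testing the equation against $\bar F$ and extracting real and imaginary parts yields $\|F\|_{L^2} \les \|H\|_{L^2}$ uniformly. The main obstacle is verifying that the kernel construction of Appendix~\ref{sec:airy} carries over to the rescaled problem, since higher derivatives of $\tilde V$ need not be controlled uniformly in $A$; this is an artifact of the formal statement of Proposition~\ref{k:bdds}, as the WKB/gluing construction near the turning points essentially only requires $C^2$ regularity and the separation between the Airy length and the scale on which the potential varies, both of which hold uniformly here. An alternative, avoiding the rescaling entirely, is a direct partition-of-unity argument: localize to the $A^{-1/6}$-neighborhoods of the turning points, compare locally with the classical Airy operator $-\p^2 + iY$ on $\R$ (which has bounded $L^2$-resolvent), use the coercivity $|V - A| \ges A^{1/3}$ on the exterior, and absorb the commutator errors via the basic estimate $\|\p_Y F\|^2 \les \|H\|\|F\|$.
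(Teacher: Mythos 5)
Your scaling heuristic is correct (turning points at $\pm\sqrt A$, Airy length $\sim A^{-1/6}$, effective gap $A^{1/3}$, giving the factor $\langle A\rangle^{-1/3}$), and the exponent bookkeeping after the rescaling $Y=\langle A\rangle^{1/2}\tilde Y$, $\varepsilon=\langle A\rangle^{-2}$ does come out right. However, the reduction to Proposition~\ref{k:bdds} has a genuine gap. That proposition is proved on $\T$ under Assumption~\ref{assump:b} — in particular $b\in C^{N+2}(\T)$ (so $C^3$ when $N=1$) and $b$ periodic — whereas the rescaled potential $\tilde V(\tilde Y)=\tilde Y^2+\langle A\rangle^{-1}R(\langle A\rangle^{1/2}\tilde Y)$ is non-periodic, grows at infinity, and comes with control on only $R$ and $\p_Y R$, not on $\p_Y^2 R$ or higher. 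You acknowledge this and assert that the Appendix~\ref{sec:airy} construction ``essentially only requires $C^2$'' and that $\T$ versus $\R$ is ``cosmetic,'' but these are non-trivial claims that would require reproving a substantial part of Appendix~\ref{sec:airy} in a modified setting; as written they are gaps, not observations. The alternative partition-of-unity route you sketch at the end is also left as an outline.

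For contrast, the paper's proof is deliberately self-contained and elementary, avoiding the Airy machinery entirely: multiply the equation by $\bar F$ and take real parts to get $\|\p_Y F\|_{L^2}^2\le\|F\|_{L^2}\|H\|_{L^2}$; multiply by $(Y^2+R-A)\bar F$ and take imaginary parts to get
\begin{equation*}
    \int (Y^2+R(Y)-A)^2|F|^2 \lesssim \|\langle Y\rangle F\|\,\|\p_Y F\| + \|H\|^2 \, ;
\end{equation*}
the left side loses coercivity only on intervals of some length $D$ around the turning points $Y_\pm\approx\pm\sqrt A$, and this loss is recovered by a one-dimensional Poincar\'e inequality
\begin{equation*}
    \int_{Y_+-D}^{Y_++D}|F|^2 \lesssim \int_{Y_++D}^{Y_++2D}|F|^2 + D^2\int_{Y_+-D}^{Y_++2D}|\p_Y F|^2 \, ,
\end{equation*}
combined with the first energy estimate. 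Optimizing $D\approx A^{-1/6}$ yields exactly the $\langle A\rangle^{-1/3}$ gain. This argument uses nothing beyond the stated bounds on $R$ and $R'$, and works on $\R$ directly. I would encourage you to work out the details of the energy-Poincar\'e route rather than trying to lean on Proposition~\ref{k:bdds}; if you do want to pursue the kernel-based approach, you would need to carefully re-derive the relevant pieces of Appendix~\ref{sec:airy} for a non-periodic, low-regularity potential, which is more work than the direct argument.
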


\begin{proof}[Proof of Lemma~\ref{lem:auxedlem}]
    First, we integrate against $\bar{F}$ and extract the real part:
    \begin{equation}
        \label{eq:firstineqforlemma}
        \| \p_Y F \|^2 \leq \| F \| \| H \| \, .
    \end{equation}
    Second, we integrate against $(Y^2 + R(Y) - A) \bar{F}$ and extract the imaginary part:
    \begin{equation}
        \label{eq:secondequalityformelem}
    \begin{aligned}
        &\int (Y^2 + R(Y) - A)^2 |F|^2 + \underbrace{\Im \int (Y^2 +R(Y) - A) \p_Y F \p_Y \bar{F}}_{=0} \\
        &\quad + \Im \int (2 Y + R'(Y)) \p_Y F \bar{F} = \Im \int H (Y^2 +R(Y) - A) \bar{F} \, .
    \end{aligned}
    \end{equation}
    The equation~\eqref{eq:secondequalityformelem}, the bound~\eqref{eq:derivativeboundonR} on $R'(Y)$, and Cauchy's inequality yield
    \begin{equation}
        \label{eq:mygoodestforlem}
        \int (Y^2 + R(Y) - A)^2 |F|^2  \lesssim \| \langle Y \rangle F \| \| \p_Y F \| + \| H \|^2 \, .
    \end{equation}
    When $A \leq 0$, its contribution is advantageous, so we focus on the case when $A = L^2 \geq 0$, where $L \geq 0$. We also assume that $L \geq 2$ and revisit $L \in [0,2]$ afterward.
    
    The term $\int (Y^2 + R(Y) - L^2)^2 |F|^2$ in~\eqref{eq:mygoodestforlem} may lose coercivity at $Y \approx \pm L$, but this loss is ameliorated by the $\int |\p_Y F|^2$ term. Let $B(Y) = Y^2 + R(Y)$. Notably, by the assumption~\eqref{eq:derivativeboundonR}, we may assume that $|B(0)| \leq \delta$ and $(2-\delta) Y - \delta \leq B'(Y) \leq (2+\delta)Y + \delta$ when $Y \geq 0$. For each $L \geq 2$, there exists a unique $Y_+ \geq 0$ at which $B(Y_+) = L^2$. We additionally have $Y_+ \in [L/2,3L/2]$, and, for a length scale $D \in (0,1]$ to be optimized,
    \begin{equation}
        B(Y) - L^2 \gtrsim D (\langle Y \rangle + L) \text{ in } \{ |Y-Y_+| \geq D \} \cap \{ Y \geq 0 \} \, ,
    \end{equation}
    which we obtain from integrating $B'(Y)$ starting from $Y_+$ (and our preexisting knowledge of $B$ around $Y=0$).
    Therefore,
    \begin{equation}
        \label{eq:ypluscoerciviy}
        \int_{\{ |Y-Y_+| \geq 1 \} \cap \{ Y \geq 0 \}} D^2 (\langle Y \rangle^2 + L^2) |F|^2 \, dY \lesssim \| \langle Y \rangle F \| \| \p_Y F \| + \| H \|^2 \, .
    \end{equation}
    In the region $\{ |Y-Y_+| \leq 1 \} \cap \{ Y \geq 0 \}$, we employ the following functional (Poincar{\'e}) inequality:
    \begin{equation}
        \label{eq:poincareinequalitything}
    \int_{Y_+-D}^{Y_++D} |F|^2 \lesssim \int_{Y_++D}^{Y_++2D} |F|^2 + D^2 \int_{Y_+-D}^{Y_++2D} |\p_Y F|^2 \, ,
    \end{equation}
    which can be proved by the fundamental theorem of calculus, translation, and rescaling. Together,~\eqref{eq:poincareinequalitything},~\eqref{eq:ypluscoerciviy}, and~\eqref{eq:firstineqforlemma} yield
    \begin{equation}
        L^2 D^2 \int_{Y_+-D}^{Y_++D} |F|^2 \lesssim \| \langle Y \rangle F \| \| \p_Y F \| + \| H \|^2 + L^2 D^4 \| F \| \| H \| \, .
    \end{equation}
    A similar inequality holds in $\{ Y \leq 0 \}$. In total,
    \begin{equation}
        D^2 \int (\langle Y \rangle^2  + L^2) |F|^2 \lesssim \| \langle Y \rangle F \| \| \p_Y F \| + \| H \|^2 + L^2 D^4\| F \| \| H \| \, ,
    \end{equation}
    which yields
    \begin{equation}
        \int (\langle Y \rangle^2  + L^2) |F|^2 \lesssim (L^2 D^2 + D^{-4}) \| F \| \| H \| + D^{-2} \| H \|^2 \, .
    \end{equation}
    To optimize this inequality, we choose $D = L^{-1/3}$. This yields
    \begin{equation}
        \int (\langle Y \rangle^2  + L^2) |F|^2 \lesssim L \| F \| L^{1/3} \| H \| + L^{2/3} \| H \|^2 \, ,
    \end{equation}
    and, after we apply Cauchy's inequality,
    \begin{equation}
        \int (\langle Y \rangle^2  + L^2) |F|^2 \lesssim L^{2/3} \| H \|^2 \, .
    \end{equation}
    
    For the case $L \leq 2$, $B(Y) - L^2$ may fail to be coercive on the region $\{ |Y| \leq 10 \}$, and we apply a Poincar{\'e} inequality like~\eqref{eq:poincareinequalitything} to control this region.
\end{proof}

We apply the above lemma with $R(Y) = \ell_1 R_1(Y)$ and $A = - \Im \Lambda_k$, assuming $\ell \ll 1$, to the $F_I$ equation in~\eqref{eq:reversegluingsystem} to obtain\footnote{Recall that $\ell_1 R_1(Y)$ is the rescaled third-order Taylor remainder multiplied by a cut-off at scale $\varepsilon^{-1/2} \ell$.}
\begin{equation}
\begin{aligned}
    \| |F_I| + |\p_Y F_I| \|_{L^2} \lesssim o_{k \to +\infty}(1) ( |\Re \Lambda_k | \| F_I \|_{L^2} + \| \varepsilon^{1/2} |G| + \varepsilon^{1/4} |\p_Y G| \|_{L^2}) \, .
     \end{aligned}
     \end{equation}
     Since $|\Re \Lambda_k|$ is assumed $O(1)$, we can absorb its contribution into the left-hand side due to the $o_{k \to +\infty}(1)$ coefficient. This gives
     \begin{equation}
        \| |F_I| + |\p_Y F_I| \|_{L^2} \lesssim o_{k \to +\infty}(1) \varepsilon^{3/8} \| |g| + |\p_y g| \|_{L^2} \, .
\end{equation}
By the outer estimates in Proposition~\ref{prop:solvabilityofoutersystem}, we have
\begin{equation}
    \varepsilon^{1/3} \| g \|_{L^2} + \varepsilon^{2/3} \| \p_y g \|_{L^2} \lesssim \varepsilon (\varepsilon^{-1/8} \| F_I \|_{L^2} + \varepsilon^{-3/8} \| \p_Y F_I \|_{L^2}) \, .
\end{equation}
Combining these estimates, we have
\begin{equation}
    \| |F_I| + |\p_Y F_I| \|_{L^2} \lesssim o_{k \to +\infty}(1) \varepsilon^{3/8} \varepsilon^{-2/3} \varepsilon \varepsilon^{-3/8} \| |F_I| + |\p_Y F_I| \|_{L^2} \, 
\end{equation}
which yields $F_I \equiv 0$ and $g \equiv 0$ when $k$ is sufficiently large.
\end{proof}

We may therefore pass (without relabeling) to any convergent subsequence  $\Lambda_k \to \Lambda_\infty$. (Recall the definition of $\Lambda_k$ in the statement of (II).)

Third, we demonstrate
\begin{quote}
    (III) $\Lambda_\infty \in \sigma(L_{\pi/4})$.
\end{quote}
\begin{proof}[Proof of (III)]
For the sake of contradiction, suppose instead that $\Lambda_\infty \notin \sigma(L_{\pi/4})$. The proof is similar to (II). We rewrite the inner system as
\begin{equation}
        \Lambda_\infty + i Y^2 F_I - \p_Y^2 F_I = - [(\Lambda_k - \Lambda_\infty) + i \ell_1 R_1(Y)] F_I - 2\p_Y X_I \p_Y G - \p_Y^2 X_I G \, .
\end{equation}
The operator on the left-hand side is invertible, and by shrinking $\ell$ and taking $k$ large, the $F_I$ terms on the right-hand side may be considered a small perturbation. The proof is then completed as in (II).
\end{proof}

Finally, we demonstrate
\begin{quote}
    (IV) For sufficiently large $k$, $(\lambda_k,f_k)$ is equal (up to constant multiple of $f_k$) to an eigenvalue-eigenfunction pair $(\lambda,\phi)(\varepsilon_k)$ constructed in the previous section.
\end{quote}

\begin{proof}[Proof of (IV)]
Let $\Phi^{\rm app}$ be the normalized eigenfunction for $L_{\pi/4}$ corresponding to eigenvalue $\Lambda_\infty$. We multiply $f_k$ by a non-zero constant such that, without loss of generality, we have
\begin{equation}
    \int F_I \Phi^{\rm app} =: \iota = 1 \text{ or } 0 \, .
\end{equation}
(We will demonstrate that $\iota = 1$.)
Then we decompose
\begin{equation}
F_I = \iota \Phi^{\rm app} + \Phi \, , \quad \lambda = \lambda^{\rm app} + \mu
\end{equation}
with $\int \Phi \Phi^{\rm app} = 0$ and $\varepsilon^{-1/2} \mu \to 0$ as $k \to +\infty$. The remainder $(\Phi,g)$ solves the projected system
\begin{equation}
    \label{eq:projectedsystemreverse}
\left\lbrace
\begin{aligned}
    &(\Lambda^{\rm app} + MQ) \Phi + i Y^2 \Phi + i \ell_1  Q(R_1(Y,\varepsilon) \Phi) = \p_Y^2 \Phi \\
    &\quad- Q(M\Phi^{\rm app} + i\ell_1 R_1(Y,\varepsilon) \Phi^{\rm app})
     - Q(2\p_Y X_I \p_Y G + \p_Y^2 X_I G) \\
    &\int \Phi \Phi^{\rm app} \, dY = 0 \\
    &(\lambda^{\rm app} + \mu) g + i \tilde{b}(y) g = \varepsilon \p_y^2 g -  \varepsilon (2 \p_y \chi_O \p_y f_I + \p_y^2 \chi_O \p_y f_I) \, ,
    \end{aligned}
    \right.
\end{equation}
which is to be compared with~\eqref{eq:projectedsystem}. The difference is that the roles of $\chi_I$ and $\chi_O$ are reversed on the right-hand sides of the equations and the errors terms come with a minus sign. By following the proof of Proposition~\ref{pro:solvabilityprojectedsystem} with the weight $w=1$ and $\chi_O$ and $\chi_I$ switched, we obtain that the projected system has a unique solution $(\Phi,g)$ when $M$ and $\varepsilon$ are sufficiently small. Additionally,
\begin{equation}
    \| \Phi \|_{H^2} \lesssim \iota (|M| + \varepsilon^{1/4}) \, ,
\end{equation}
\begin{equation}
    \varepsilon^{1/3} \| g \|_{L^2} + \varepsilon^{2/3} \| \p_y g \|_{L^2} \lesssim \varepsilon^{7/8} \| \Phi \|_{H^2} + O(\varepsilon^{\infty}) \, .
\end{equation}
This demonstrates that $\iota = 1$, since otherwise $f = 0$. We also record
\begin{equation}
    \varepsilon^{-1/3} \| G \|_{L^2} + \varepsilon^{-1/4} \| \p_Y G \|_{L^2} \lesssim \varepsilon^{1/12} (|M| + \varepsilon^{1/4}) \, .
\end{equation}
We now plug this information into the one-dimensional reduced equation
\begin{equation}
    \label{eq:reducedequationuniqueness}
    \begin{aligned}
     &(I-Q)(M\Phi^{\rm app} + i\ell_1 R_1(Y) \Phi^{\rm app}) \\
     &\quad = - (I-Q)(M\Phi + i\ell_1 R_1(Y) \Phi) - (I-Q)(2\p_Y X_I \p_Y G + \p_Y^2 X_I G) \, ,   
    \end{aligned}
\end{equation}
which we rewrite as
\begin{equation}
        M + i \ell_1  \langle R_1(Y) \Phi^{\rm app},\bar{\Phi}^{\rm app} \rangle  = - i \ell_1 \langle R_1(Y) \Phi, \bar{\Phi}^{\rm app} \rangle - \langle 2\p_Y X_O \p_Y G + \p_Y^2 X_O G, \bar{\Phi}^{\rm app} \rangle \, .
\end{equation}
to be compared with~\eqref{eq:onedimreduceinintegralform}. Again, this is of the form
\begin{equation}
    M + Z(M,\varepsilon) = O(\varepsilon^{1/4}) \, ,
\end{equation}
where $|Z(M,\varepsilon)| \lesssim \varepsilon^{1/4}$, the right-hand is independent of $M$, and $Z$ depends analytically on $M$ for each fixed $\varepsilon$. Therefore, there is a unique solution $(\Phi,g,\lambda)(\varepsilon)$ satisfying $|M| \ll 1$. If we apply the decomposition~\eqref{eq:reversegluingdecomposition} to the eigenfunction we constructed, then this furnishes a solution to~\eqref{eq:projectedsystemreverse}. Therefore, solutions must be equal to the one constructed.
\end{proof}


\subsection{Asymptotic behavior}
\label{sec:asymptoticdescription}

The goal of this section is to explain the consequences of the spectral picture in Theorem~\ref{thm:eigenfunctions}, concerning $L_\varepsilon = \varepsilon \p_y^2 - ib(y)$, for the advection-diffusion equation~\eqref{adv:diff}.

Let $\lambda_0$ be any eigenvalue of $L_\varepsilon$ and $\phi(y)$ a corresponding eigenfunction. Then
\begin{equation}
     f^{\rm lin}(x,y,t) := e^{ix+\lambda t} \phi(y) + e^{-ix+\bar{\lambda} t} \bar{\phi}(y) = 2 \Re e^{ix+\lambda t} \phi(y)
\end{equation}
is a solution to the equation $\p_t f + b(y) \p_x f = \varepsilon \p_y^2 f$. Writing $\lambda_0 = a-ic$ ($c = - \Im \lambda_0)$, we have
\begin{equation}
    \label{eq:travelingwaves}
    f^{\rm lin}(x,y,t) = 2 e^{at} \Re e^{i(x-ct)} \phi(y) = e^{at} f^{\rm lin}(x-ct,y,0) \, .
\end{equation}
In particular, the corresponding dynamical solution $f^{\rm lin}$ has the form of a decaying traveling wave with amplitude $\sim e^{at}$ moving to the right with speed $c$.

Next, we analyze the asymptotics of a generic solution to~\eqref{adv:diff} with vanishing $x$-averages. To begin, we will isolate the slowest parts of the spectrum of the full evolution, which will belong to the $|k|=1$ modes. Indeed, the operator acting in the $k$th Fourier mode, $k \geq 1$, is precisely $-\kappa k^2 + k L_{\kappa/k}$, so that the parameter $\varepsilon = \kappa/k$ is \emph{smaller} the larger $k$ is, and therefore the spectral asymptotics are even \emph{better} for $k \geq 2$. Let
\begin{equation}
    \tau_0^{-1} := \min_j \sqrt{|b''(\gamma_j)|/2} \, .
\end{equation}
For $k \geq 1$ and $\kappa \ll 1$, we have
\begin{equation}
    \sup \Re \sigma(-\kappa k^2 + k L_{\kappa/k}) = -\kappa k^2 - k [ \tau_0^{-1} \sqrt{\frac{\kappa}{2k}} + O\Big( \frac{\kappa}{k} \Big)^{3/4} ] \, .
\end{equation}
(For $k \leq -1$, the analogous spectral gap is obtained by adjointness.) We therefore focus on the $k=1$ mode. By the above characterization of the spectrum, we have that, whenever $\kappa \ll 1$, there is at least one `slowest eigenvalue' of $L_\kappa$, that is, $\lambda$ with
\begin{equation}
\Re \lambda = s(L_\kappa) := \sup \Re \sigma(L_\kappa) \leq - c_0 \kappa^{1/2} \, ,
\end{equation}
$c_0 \ll 1$. Such an eigenvalue will be associated to a critical point $\gamma_j$ having $|b''(\gamma_j)| = \min_j b''(\gamma_j)$ and $\alpha=0$ in the above indexing convention, i.e., $\alpha$ is the eigenfunction index in~\eqref{eq:Hermitefnshey}. Let $K_0 \leq J_0$ be the number of critical points attaining the above minimum. Then, for a given $\kappa \ll 1$, there exist $K \in [1,K_0]$ `slowest' eigenvalues of $L_\kappa$. We re-index the critical points in a $\kappa$-dependent way such that the slowest eigenvalues are associated to the critical points $\gamma_j$, $1 \leq j \leq K$. Let $P_{\rm slow}$ be the spectral projection onto these eigenvalues. In particular, we have\footnote{The `little oh' for the $k=1$ mode can be deduced from the spectral mapping theorem in~\cite[Chapter IV, Corollary 3.11]{engel2000one} applied to $L_\kappa|_{\ker P_{\rm slow}}$.}
\begin{equation}
    \label{eq:decompositionoffintoslow}
    f(x,y,t) = \underbrace{e^{-t} e^{ix} [e^{tL_\kappa} P_{\rm slow} f^{\rm in}_1](y) + \text{ complex conjugate}}_{=: f_{\rm slow}(x,y,t) = e^{ix} f_{1,{\rm slow}} + \text{ cc}} + o_{t \to +\infty} (e^{s(L_\kappa) t}) \, ,
\end{equation}
and the `little oh' is measured in $L^2$. By the analysis around~\eqref{eq:travelingwaves}, we have demonstrated the assertion about generic decaying traveling waves in Corollary~\ref{cor:spectralcorollary}. The real $2K$-dimensional subspace $E_\kappa$ is that for which $P_{\rm slow} f^{\rm in}_1$ vanishes.

We now examine the asymptotics of the length scales
\begin{equation}
    \ell(t) := \frac{\| f(t) \|_{L^2}}{\| f(t) \|_{\dot H^1}} \, , \quad \bar{\ell}(t) := \frac{\| f(t) \|_{\dot H^{-1}}}{\| f(t) \|_{L^2}} \, ,
\end{equation}
for generic solutions. It is not difficult to demonstrate using~\eqref{eq:decompositionoffintoslow} that
\begin{equation}
\ell(t) \sim_{t \to +\infty} \frac{\| f_{\rm slow}(t) \|_{L^2}}{\| f_{\rm slow}(t) \|_{\dot H^1}} \, , \quad
    \bar{\ell}(t) \sim_{t \to +\infty} \frac{\| f_{\rm slow}(t) \|_{H^{-1}}}{\| f_{\rm slow}(t) \|_{L^2}} \, .
\end{equation}
(Technically, one must use the smoothing property of the equation to demonstrate that the `little oh' in~\eqref{eq:decompositionoffintoslow} can also be measured in $H^1$.)
We additionally have
\begin{equation}
    \| f_{\rm slow}(t) \|_{L^2_{x,y}}^2 = 2 \pi e^{-2(\kappa + s(L_\kappa))t} \| \widehat{P_{\rm slow} f_1^{\rm in}} \|^2_{L^2_y}
\end{equation}
\begin{equation}
    \| f_{\rm slow}(t) \|_{\dot H^{1}_{x,y}}^2 = 2 \pi e^{-2(\kappa + s(L_\kappa))t} \| \widehat{P_{\rm slow} f_1^{\rm in}} \|_{ H^1_y}^2
\end{equation}
\begin{equation}
    \| f_{\rm slow}(t) \|_{\dot H^{-1}_{x,y}}^2 = 2 \pi e^{-2(\kappa + s(L_\kappa))t} \| \widehat{P_{\rm slow} f_1^{\rm in}} \|_{H^{-1}_y}^2 \, ,
\end{equation}
where
\begin{equation}
\| f \|_{H^{-1}_y}^2 := \sup_{\varphi \in H^1_y} \frac{|\langle f, \varphi \rangle|^2}{\| \varphi \|_{L^2}^2 + \| \p_y \varphi \|_{L^2}^2} \, .
\end{equation}
This is a consequence of the Fourier multiplier characterization of $\| \cdot \|_{\dot H^{-1}_{x,y}}$,
\begin{equation}
\| f \|_{\dot H^{-1}_{x,y}}^2 = \| (|k|^2+|\eta|^2)^{-1/2} \hat{f}(k,\eta)\|_{L^2_{k,\eta}}^2 \, ,
\end{equation}
and the 2D Fourier transform carrying an extra multiple of $(2\pi)^{-1/2}$.



Let $g = P_{\rm slow} f^{\rm in}_1$. By construction,
\begin{equation}
    g = \sum_{k=1}^K c_k \phi_k \, , \quad \phi_k = \phi_{k,I} \chi_{k,I} + \phi_{k,O} \chi_{k,O} \, ,
\end{equation}
where the $\phi_k$ are the constructed eigenfunctions associated to critical points $\gamma_k$. Notably, $\| \phi_{k,I} \chi_{k,I} \|_{L^2} \approx \varepsilon^{1/8}$, in the sense of upper and lower bounds, whereas $\| \phi_{k,O} \|_{L^2} = O(\varepsilon^{\infty})$.
Then
\begin{equation}
    \| g \|_{L^2}^2 = \sum_{k=1}^K |c_k|^2 ( \| \phi_{k,I} \chi_{k,I} \|_{L^2}^2 + O(\varepsilon^{\infty})) \approx \sum_{k=1}^K |c_k|^2 \varepsilon^{1/4} \, .
\end{equation}
Next, we coarsely upper bound $\| g \|_{\dot H^1_y}$ and $\| g \|_{H^{-1}_y}$:
\begin{equation}
    \| g \|_{\dot H^1_y} \lesssim \sum |c_k| \| \phi_k \|_{\dot H^1_y} \lesssim \sum |c_k| \varepsilon^{3/8}
\end{equation}
\begin{equation}
    \| g \|_{H^{-1}_y} \lesssim \sum |c_k| \| \phi_k \|_{H^{-1}_y} \lesssim \sum |c_k| \| \phi_k \|_{L^1_y} \lesssim \sum |c_k| \varepsilon^{1/4} \, .
\end{equation}
For the corresponding lower bound on $\| g \|_{\dot H^1_y}$, it is enough to observe that $\| \p_\xi \Phi_{k,I} \|_{L^2(\{ |\xi| \leq 1 \})} \gtrsim 1$. For the lower bound on $\| g \|_{H^{-1}_y}$, we may look at $\varphi$ supported away from $\gamma_j$ ($j \neq k$) and satisfying
\begin{equation}
    \varphi \equiv 1 \text{ on } D_k := \{ \chi_{k,I} \equiv 1 \} \, ,
\end{equation}
in which case $\| \varphi \|_{H^1} \lesssim 1$ and
\begin{equation}
\begin{aligned}
    \langle g, \varphi \rangle &= c_k \langle \phi_{k,I}  , \varphi \rangle + c_k \langle \phi_{k,I} (\chi_{k,I} - 1), \varphi \rangle + O(\varepsilon^{\infty}) \max_{j=1,\hdots,K} |c_j| \\
    &= c_k \int_{D_k} \phi_{k,I}(y) \, dy + O(\varepsilon^{\infty}) \max_{j=1,\hdots,K} |c_j| \\
    &= c_k \tilde{\ell}_k \varepsilon^{1/4} \int_{\R} \Phi_{k,I}(Y) \, dY + O(\varepsilon^{\infty}) \max_{j=1,\hdots,K} |c_j| \\
    &= c_k \tilde{\ell}_k \varepsilon^{1/4} \int G_1(e^{i \pi/8} Y) \, dY + O(\varepsilon^{1/2}) \max_{j=1,\hdots,K} |c_j| \\
    &= c_k \tilde{\ell}_k \varepsilon^{1/4} + O(\varepsilon^{1/2}) \max_{j=1,\hdots,K} |c_j| \, .
\end{aligned}
\end{equation}
Since this was valid for every $k=1,\hdots,K$, and since the $\ell^1$ and $\ell^\infty$ norms on $\{ 1, \hdots, K \}$ are equivalent, we obtain the desired upper and lower bounds on the $\limsup$ and $\liminf$ of $\ell(t)$ and $\bar{\ell}(t)$. This completes the proof of Corollary~\ref{cor:spectralcorollary}.

\begin{remark}[Comments on projective quantities]
    \label{rmk:projectivized}
We see that there are difference between the projectivized (i.e., normalized) quantities $\ell$ and $\bar{\ell}$. On functions belonging entirely to the $k$th Fourier mode, both $\sim k$ as $k \to +\infty$. However, differences arise when the energy is split evenly between two modes $k_2 \gg k_1 = O(1)$, namely, $\ell \sim k_2$ whereas $\bar{\ell} \sim k_1$ as $k_2/k_1 \to +\infty$. What is more relevant here is the difference between $L^2$-projectivized quantities based on $\| \cdot \|_{\dot H^s}$ with $s > -1/2$ and $s \leq -1/2$: Schwartz functions belonging to $\dot H^{-1/2}$ necessarily have vanishing mean. This is ultimately why, in the above arguments, the quantity $\bar{\ell}(t)$ presents the same scaling in $\varepsilon$ as, e.g., $\| f(t) \|_{L^1}/\| f(t) \|_{L^2}$.
\end{remark}


\begin{remark}
    The spectral projectors onto the slowest eigenvalues are given by $\langle \cdot, \bar{\phi} \rangle/\| \bar{\phi} \|_{L^2}^2$, i.e., integration against the associated eigenfunction, suitable normalized. A consequence of this is that continuous initial data which vanish at the critical points will have small projections onto the slowest eigenmodes.
\end{remark}

\subsection{High-order spectral asymptotics}
\label{sec:higherorderasymptotics}

We now construct higher-order asymptotic expansions of the eigenvalues and eigenfunctions. Let $m \geq 0$ be the order of the expansion. Let
\begin{equation}
    b(y) - b(\gamma_1) = \sum_{k=0}^{m} \underbrace{\frac{1}{(k+2)!} b^{(k+2)}(\gamma_1) (y-\gamma_1)^{k+2}}_{=: b_k(y)} + \tilde{r}_{m+1}(y)
\end{equation}
be the Taylor expansion of $b(y) - b(\gamma_1)$ around $y=\gamma_1$ with remainder $\tilde{r}_{m+1}(y)$ of order $m+3$. Recall $\ell_1 = \tilde{\ell}_1 \varepsilon^{1/4}$ and consider also the change of variables 
\begin{equation}
    Y = \frac{y}{\ell_1} \, , \quad B_k(Y) = \ell_1^{-(k+2)} b_k(y)\, ,
\end{equation}
and $\tilde{R}_{m+1}(Y) = \ell_1^{-(m+3)} \tilde{r}_{m+1}(y)$. We additionally expand
\begin{equation}
    \Lambda = \sum_{k=0}^m \ell_1^k \Lambda_k + M \, , \quad F = \sum_{k=0}^m \ell_1^k \Phi_k + \Phi \, .
\end{equation}
The eigenvalue equation can be written
\begin{equation}
\begin{aligned}
    &\left( \sum_{k=0}^m \ell_1^k \Lambda_k + M \right) \left( \sum_{k=0}^m \ell_1^k \Phi_k + \Phi \right) + \left( \sum_{k=0}^{m} \ell_1^k B_k(Y) + \ell_1^{m+1} \tilde{R}_{m+1}(Y) \right) \left( \sum_{k=0}^m \ell_1^k \Phi_k + \Phi \right) \\
    &\quad = \p_Y^2 \left( \sum_{k=0}^m \ell_1^k \Phi_k + \Phi \right) \, .
\end{aligned}
\end{equation}
The equation at zeroth order is
\begin{equation}
    (\Lambda_0 + iY^2 - \p_Y^2) \Phi_0 = 0 \, .
\end{equation}
We therefore fix $(\Phi_0,\Lambda_0)$ a solution of this equation: $\Lambda = \Lambda_\alpha$ and $\Phi_0 = \Phi_\alpha$ for $\alpha \in \N_0$, as in Section~\ref{sec:innerequation}, with the normalization condition $\int \Phi_0^2= 1$.

The equation at $k$th order is
\begin{equation}
    (\Lambda_0 - L_{\pi/4}) \Phi_k = -  \sum_{j=1}^k \Lambda_j \Phi_{k-j} - i \sum_{j=1}^{k} B_j(Y) \Phi_{k-j} \, ,
\end{equation}
which we supplement with the normalization condition $\int \Phi_k \Phi_0 = 0$. This equation is solvable provided that the right-hand side is in the range of the operator $\Lambda_0 - L_{\pi/4}$:
\begin{equation}
    \sum_{j=1}^k \Lambda_j \int \Phi_{k-j} \Phi_0 = - i \sum_{j=1}^k \int B_j(Y) \Phi_{k-j} \Phi_0 \, .
\end{equation}
On the left-hand side, the $j<k$ terms vanish. Therefore,
\begin{equation}
    \label{eq:eigenvalueconditionlambdak}
    \Lambda_k := - i \sum_{j=1}^k \int B_j(Y) \Phi_{k-j} \Phi_0 \, .
\end{equation}
In this way, we may inductively solve for $(\Phi_k,\Lambda_k)$.

Define
\begin{equation}
    \Phi^{\rm app} = \sum_{k=0}^m \ell_1^k \Phi_k \, , \quad \Lambda^{\rm app} = \sum_{k=0}^m \ell_1^k \lambda_k
\end{equation}
\begin{equation}
    B^{\rm app}(Y) = \sum_{k=0}^m \ell_1^k B_k(Y) \, .
\end{equation}

We write the inner-outer gluing system as before except with the ansatz
\begin{equation}
    F_I = \Phi^{\rm app} + \Phi \, , \quad \Lambda = \Lambda^{\rm app} + M \, .
\end{equation}
Moreover, in the inner equation, we use the following extension of the shear flow:
\begin{equation}
    B(Y) = B^{\rm app}(Y) + \ell_1^{m+1} \underbrace{\tilde{R}_{m+1}(Y) \tilde{X}_I}_{=: R_{m+1}(Y)} \, .
\end{equation}
The resulting system is
\begin{equation}
\left\lbrace
\begin{aligned}
   &(\Lambda^{\rm app} + M) \Phi + i B^{\rm app} \Phi + i \ell_1^{m+1}  R_{m+1}(Y) ( \Phi^{\rm app} + \Phi) +  M \Phi^{\rm app} \\
   &\quad\quad = \p_Y^2 \Phi +  ( 2\p_Y X_O \p_Y G + \p_Y^2 X_O G ) \\
    & \langle \Phi, \bar{\Phi}_0 \rangle = 0\\ 
    &(\lambda^{\rm app} + \mu) g + i \tilde{b}(y) g = \varepsilon \p_y^2 g + \varepsilon [2 \p_y \chi_I \p_y (\phi^{\rm app} + \phi) + (\phi^{\rm app} + \phi) \p_y^2 \chi_I ] \, . 
\end{aligned}
\right.
\end{equation}
This system can be solved using the Lyapunov-Schmidt reduction as before. (However, it may be necessary to shrink $\ell$ and $\varepsilon$ depending on $m$.)

It is possible to solve for $\Lambda_k$ and $\Phi_k$ more explicitly in the following way. First, it is convenient to undo the generalized Wick rotation by writing $X = e^{i\zeta_0/2} Y$, $\zeta_0 = \pi/4$. The problem for the $k$th approximate solution is
\begin{equation}
    \label{eq:ibecomereal}
    (\hat{\Lambda}_0 - L_0) \hat{\Phi}_k = - \sum_{j=1}^k \hat{\Lambda}_j \hat{\Phi}_{k-j} - \sum_{j=1}^k B_j(X) \hat{\Phi}_{k-j} \, ,
\end{equation}
where
\begin{equation}
    \hat{\Phi}_k(X) = e^{ik \frac{\zeta_0}{2}} e^{-i\frac{\zeta_0}{4}} \Phi_k(Y) \, , \quad
    \hat{\Lambda}_k(X) = e^{i(k-2) \frac{\zeta_0}{2}} \Lambda_k \, .
\end{equation}
The above problem~\eqref{eq:ibecomereal} is entirely real-valued. The self-adjoint operator $L_0$ can be diagonalized in terms of Hermite functions. Notably, $\hat{\Phi}_0 = G_\alpha$, the $\alpha$th Hermite function, and each solve adds degree at most three, so $\hat{\Phi}_k$, $k \geq 1$, is expressible as a linear combination of $G_\beta$, $0 \leq \beta \leq \alpha+3k$, $\beta \neq \alpha$. The condition~\eqref{eq:eigenvalueconditionlambdak} ensures the solvability, i.e., that the projection of the right-hand side onto $G_\alpha$ is zero:
\begin{equation}
    \hat{\Phi}_k = - \sum_{0 \leq \beta \leq \alpha+3k, \beta \neq \alpha}  \frac{G_\beta(X)}{2(\beta-\alpha)} \sum_{j=1}^k  \hat{\Lambda}_j \langle \hat{\Phi}_{k-j}, G_\beta \rangle + \langle B_j \hat{\Phi}_{k-j}, G_\beta \rangle \, ,
\end{equation}
\begin{equation}
    \hat{\Lambda}_k := - \sum_{j=1}^k \int B_j \hat{\Phi}_{k-j} G_\alpha \, .
\end{equation}




\section{Future questions}
    \label{sec:futurequestions}


We conclude by highlighting some questions regarding uniform mixing for passive scalars. 


One theme awaiting development is \emph{localized mixing and enhanced dissipation}.
Currently, Theorem~\ref{thm:1} captures the ``worst case scenario": We only prove that the solution mixes with a rate corresponding to the highest order critical point. 
For the inviscid problem, it is possible to prove
\begin{align}
\label{local}
    \norm{ \lb t \tilde{B}'(y)\rb f }_{H^{-1}} \lesssim \norm{f^{\rm in }}_{H^1} \, ,
\end{align}
where $\tilde{B}'$ is defined as $B$ from~\eqref{B} with $t^{-\frac{1}{m+1}}$ replacing $\epsilon^{\frac{m}{m+3}}$. In particular, this captures that away from critical points, where $|B'|$ is bounded away from zero, the solution is mixing at the monotone rate. Hence, the following question seems natural:
\begin{enumerate}
    \item[(I)] Does an analogue of the localized mixing estimate~\eqref{local} hold when $\kappa > 0$?
\end{enumerate}
Unlike the inviscid setting, the ``leakage" caused by the diffusion seems like it could prevent \eqref{local} from holding globally in time. Trying to adopt a similar approach to the one used to prove Theorem~\ref{thm:1} suggests that localized enhanced dissipation estimates of the form 
\begin{align}
\label{localed}
    \norm{e^{c \kappa^{1/3}|B'(y)|^{2/3} t } f(t)}_{L^\infty} \lesssim \norm{f^{\rm in}}_{L^\infty}
\end{align}
would be useful to have; however,~\eqref{localed} is \emph{false} globally in time: It fails for pure eigenstates, although it remains true beyond the $\varepsilon^{-1/2}$ enhanced dissipation time scale, due to the localization from the eigenfunction itself. See \cite[Theorem 2]{GLM24} for a statement of localized enhanced dissipation.  
It seems likely that a suitable refinement of the vector field method or a stochastic approach to shear mixing would be useful for further investigations in this direction. 

A related point which we encountered in our attempts to prove Theorem~\ref{thm:1} is that the known uniform mixing estimates are not obviously stable with respect to cutting off the initial data: $\| e^{tL} (f^{\rm in} \chi) \|_{H^{-1}} \lesssim C(t) \| f^{\rm in} \chi \|_{H^1}$, which may be large, depending on $\chi'$.\footnote{In the monotone case, it \emph{is} possible to prove estimates of the type $\| e^{tL_\varepsilon} (f^{\rm in} \chi_\ell) \|_{H^{-1}} \lesssim \langle t \rangle^{-1} \| f^{\rm in} \|_{H^1}$ for certain $\chi_\ell$ satisfying $\chi_\ell \equiv 1$ on $B_1$ and $\chi_\ell \equiv 0$ outside $B_{1+2\ell}$, where $\ell \in (0,1]$. The proof we have in mind requires running the vector field method on $f^{\rm in} \chi_\ell$.} However, in the monotone inviscid case, non-stationary phase behaves perfectly well with respect to such cut-offs. One way to view this is as a consequence of an estimate $\| e^{-itb(y)} f^{\rm in} \|_{W^{-1,\infty}} \lesssim \langle t \rangle^{-1} \| f^{\rm in} \|_{W^{1,1}}$, which is stronger than the $H^1 \to H^{-1}$ estimate, at least for compactly supported $f^{\rm in}$, and moreover stable with respect to cut-offs. We therefore pose the problem:
 \begin{enumerate}
     \item[(II)] Prove uniform $W^{1,1} \to W^{-1,\infty}$ mixing.
 \end{enumerate}
    
Mixing with non-degenerate critical points arises naturally in the Doi-Saintillan-Shelley model, a kinetic model of an active suspension~\cite{AlbrittonOhmStabilizing,CotiZelati2023,DCZGV2024}:
\begin{equation}
    \label{eq:DSS}
\begin{aligned}
    \p_t f + (u+p) \cdot \nabla_x f + \div_p^{\mathbb{S}^{d-1}} [(I-p\otimes p) \nabla_x u p f] &= \kappa \Delta_x f + \nu \Delta_p^{\mathbb{S}^{d-1}} f \\
    -\Delta u + \nabla \pi = \pm \int_{\mathbb{S}^{d-1}} f p \otimes p \, dp \, , \quad \div u &= 0 \, ,
\end{aligned}
\end{equation}
where $f=f(x,p,t) : \T^d \times \mathbb{S}^{d-1} \to [0,+\infty)$ is a number density of microswimmers. This model experiences a Landau damping phenomenon near the homogeneous, isotropic equilibrium $f \equiv \text{const.}$, which was justified for the linearization of~\eqref{eq:DSS} with $\nu=0$ in~\cite{AlbrittonOhmStabilizing,CotiZelati2023}. In~\cite{CotiZelati2023}, the authors moreover extended this to $\nu > 0$ with certain $\log t$ losses in the mixing rate. With the methods developed in the present work, the following should be possible:
\begin{enumerate}
    \item[(III)] Prove the sharp uniform-in-$\nu$ Landau damping estimates for the linearized Doi-Saintillan-Shelley model.
\end{enumerate}
The above works obtain better-than-expected decay for $\nabla_x u$ by applying better mixing estimates specific to initial data vanishing at the critical points, akin to eliminating the main term in the stationary phase asymptotics. We do not attempt such refinements here, but it would be interesting to have them.

Shear mixing arises generically in 2D steady flows, but thus far it has been most carefully studied in the simple setting of parallel shear flows.
\begin{enumerate}
    \item[(IV)] Prove uniform mixing and enhanced dissipation for passive scalars in vortices and Hamiltonian flows $u = \nabla^\perp \psi$.
\end{enumerate}
For Hamiltonian flows, the mean on streamlines may no longer be conserved, so the problem must be formulated properly. These investigations are comparatively still in their mathematical infancy. We recommend~\cite{DolceHamiltonian,BrueCotiZelatiHamiltonian2024} and references therein.



Finally, the spectral asymptotics in Theorem~\ref{thm:eigenfunctions} only describe the behavior near non-degenerate critical points.
\begin{enumerate}
    \item[(V)] Extend the eigenfunction asymptotics in Theorem~\ref{thm:eigenfunctions} to higher-order critical points $N \geq 2$ and vortices.
\end{enumerate}
One may also consider eigenfunctions asymptotically localized at boundaries (e.g., Neumann boundary conditions are considered in~\cite{Camassa2010}) and Morse critical points on manifolds.


\appendix
\numberwithin{theorem}{section}

\appendix
\section{Fundamental solution bounds for Airy}
\label{sec:airy}
In this section we give a proof of Proposition \ref{k:bdds}. We break the proof up into several parts following \cite{J23, BCJ24}. We consider 
\begin{align}
\label{AiryKass}
    &-\varepsilon\p_y^2 K(y,z, \lambda) -\sigma_0 \varepsilon^{\frac{N+1}{N + 3}}K(y,z, \lambda)  + \alpha K(y,z, \lambda)+  i(b(y) - \lambda)K(y,z, \lambda) = \delta(y-z), \\
    &\alpha \geq 0, \sigma_0 > 0, \notag
\end{align}
where $\sigma_0$ will be determined in Lemma \ref{spec:gap}.
The effect of $-\sigma_0\varepsilon^{\frac{N+1}{N+3}}$ and $\alpha$ is simply to translate the spectrum left and right, respectively, in the complex plane. These terms play a relatively passive role in the subsequent estimates. For conciseness, we will abuse notation and consider  
\begin{align}
    \label{AiryK}
    -\varepsilon\p_y^2 K(y,z,\lambda) +  i(b(y) - \lambda)K(y,z, \lambda) = \delta(y-z),
\end{align}
and remark how to include them in the final estimates.

Recall $\chi_j$ from \eqref{chij}. Define $K_j(y,z,\lambda) := \chi_j(y)K(y,z, \lambda) $. We have that  $K_j(y,z, \lambda)$ satisfies
\begin{align}
    \label{locK}
    &-\varepsilon\p_y^2 K_j(y,z, \lambda) +  i(b(y) - \lambda)K_j(y,z,\lambda) \\
    &=  \chi_j(z)\delta(y-z) -2\varepsilon \p_y K(y,z) \chi_j'(y) -  \varepsilon K(y,z, \lambda)\chi_j''(y) 
\end{align}
For simplicity of notation, we will drop the $\lambda$ dependence on $K$ in the proofs but not in the statements of results. In the following, we will frequently use the notation 
\begin{align}
    I_L(z) := [z - L, z + L].
\end{align}

We begin with a functional inequality independent of the PDE which is sometimes referred to in the literature as a ``spectral gap" inequality. 
\begin{lemma}
\label{spec:gap}
 Let $f: \T \to \C$ and recall the role of $\sigma_0$ in the definition of $|B'(y)|$ from \eqref{B}. There exists  $\sigma_0 \in (0,1)$ such that, for all $\lambda \in \R$ and $f \in H^1(\T)$, we have
    \begin{align}
    \label{sg1}
         \norm{\varepsilon^{1/6}|B'(y)|^{1/3} f}_{L^2(\T)} \leq  \norm{|b(y) - \lambda|^{1/2} f}_{L^2(\T)}  +  \norm{\varepsilon^{1/2}\p_y f}_{L^2(\T)}.
    \end{align}
    
 \end{lemma}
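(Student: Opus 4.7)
My plan is to square the desired inequality and reduce it to the additive form
\begin{equation}
\int_{\T} V(y)\,|f|^2\,dy \;\leq\; C_{b}\!\left(\int_{\T}|b(y)-\lambda|\,|f|^2\,dy+\int_{\T}\varepsilon\,|\partial_y f|^2\,dy\right),
\label{eq:squared}
\end{equation}
where $V(y):=\varepsilon^{1/3}|B'(y)|^{2/3}$ and $C_{b}$ depends only on $\|b\|_{C^{N+2}}$ and $\sigma_\sharp$. Since $|B'|$ carries a factor of $2\sigma_0$ in its definition, the weight $V$ carries a factor $(2\sigma_0)^{2/3}$. Hence taking $\sigma_0$ small enough to ensure $C_{b}(2\sigma_0)^{2/3}\le 1/2$ upgrades \eqref{eq:squared} to the clean inequality $\int V|f|^2\le\int|U||f|^2+\int\varepsilon|f'|^2$, which in turn yields \eqref{sg1} via $\sqrt{a+b}\le\sqrt{a}+\sqrt{b}$. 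So the real task is to prove \eqref{eq:squared} with a generic constant.

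The first step is the trivial partition into ``good'' and ``bad'' regions. Writing $U:=b-\lambda$, define the bad set $S:=\{y\in\T\colon V(y)>2|U(y)|\}$. On $\T\setminus S$ we have $V|f|^2\le 2|U||f|^2$ pointwise, so the entire contribution of the complement is already bounded by $2\int|U||f|^2$. All of the work goes into estimating $\int_{S}V|f|^2$.

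The second step is to analyze the geometry of $S$. It is a union of small intervals centered at the critical layers $\{U(y)=0\}$. The crucial quantitative claim is that each connected component $I\subset S$ has length $|I|\lesssim \ell(y_I):=\varepsilon^{1/3}|B'(y_I)|^{-1/3}$, where $y_I$ is any point of $I$, and moreover that $V$ stays comparable across a doubled interval $2I$. To verify this, one considers two regimes: (i) if the critical layer $y_*$ sits in a region with $|y_*-\gamma_j|\gtrsim \varepsilon^{1/(m+3)}$ for every critical point $\gamma_j$, then $|b'(y_*)|\approx |B'(y_*)|$ and a linearization $|U(y)|\gtrsim|b'(y_*)||y-y_*|$ pins down $|I|\lesssim \varepsilon^{1/3}|B'(y_*)|^{-1/3}$; (ii) if $y_*$ lies within distance $\varepsilon^{1/(m+3)}$ of a critical point $\gamma_j$ of order $m$, then both $V$ and $|B'|$ are essentially constant, comparable to $\varepsilon^{(m+1)/(m+3)}$ and $\varepsilon^{m/(m+3)}$, respectively, and the entire ``fat'' neighborhood of $\gamma_j$ of size $\varepsilon^{1/(m+3)}$ must be treated as a single piece of $S$. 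In both cases $|I|\ell(y_I)^{-1}$ is bounded and $V\ell^2\approx \varepsilon$.

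The third step is a local Poincar{\'e}/fundamental-theorem-of-calculus estimate on each component $I\subset S$. I select a buffer interval $I^{+}\subset\T\setminus S$ of length comparable to $|I|$ lying adjacent to $I$ (such $I^{+}$ exists because the regularization in the definition \eqref{B} of $|B'|$ prevents $S$ from occupying large regions near critical points, and the hypothesis $|\gamma_j-\gamma_l|\geq 4\sigma_\sharp$ keeps the bad sets near different critical points separated). For $y\in I$ and $z\in I^{+}$, $|f(y)|^2\le 2|f(z)|^2+2|I|\int_{I\cup I^{+}}|f'|^2$. Averaging in $z$ and integrating in $y$ gives
\begin{equation}
\int_{I}|f|^2 \;\lesssim\; \int_{I^{+}}|f|^2 \;+\; \ell^2\!\int_{I\cup I^{+}}|f'|^2.
\end{equation}
Multiplying by $V_{\max,I}\approx V$ on $I\cup I^{+}$ and using that $|U|\geq V/2$ on $I^{+}$ produces
$$
\int_{I}V|f|^2 \;\lesssim\; \int_{I^{+}}|U||f|^2 \;+\; V\ell^2\!\int_{I\cup I^{+}}|f'|^2 \;\lesssim\; \int_{I^{+}}|U||f|^2 \;+\; \varepsilon\!\int_{I\cup I^{+}}|f'|^2,
$$
using the scaling identity $V\ell^2\approx\varepsilon$ from the previous step. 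Summing over the $O(1)$ components of $S$ (with multiplicity controlled by finite overlap of the $I^{+}$'s) yields \eqref{eq:squared}.

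The main obstacle is step two, namely verifying uniformly in $\lambda$ and uniformly across both regimes (i) and (ii) that the bad set $S$ has the right Airy-length structure and that a comparably sized buffer $I^{+}$ can always be chosen outside $S$. This is precisely why the regularization of $b'$ into $B'$ from \eqref{B} is essential: in regime (ii), without regularization we would have $V/|U|\to\infty$ at the critical point itself, and no buffer would exist; the additive term $\varepsilon^{m/(m+3)}$ in $|B'|$ exactly prevents this and makes $V$ stay comparable to $|U|$ on annuli surrounding the $\varepsilon^{1/(m+3)}$-neighborhood of $\gamma_j$. Everything else is a careful bookkeeping of the scaling $V\ell^2\approx\varepsilon$.
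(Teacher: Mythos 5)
Your proof is correct in outline and takes a genuinely different route from the paper's. The paper first smoothly cuts the function into pieces $f_j = \chi_j f$ localized near each critical point (plus a monotone remainder $f_0$), absorbs the resulting commutator term $\|\varepsilon^{1/2} f\, \partial_y\chi_j\|$ into the left-hand side, and then for each $f_j$ splits $\T$ into Airy-length intervals around the critical layers and their complement, using the Gagliardo--Nirenberg--type inequality $\int_I |g|^2 \lesssim |I|\,\|g\|\,\|\partial_y g\|$ for compactly supported $g$; the length scale is chosen as $\varepsilon^{1/3}$, $\varepsilon^{1/(m+3)}$, or $\varepsilon^{1/3}/|b'(y_n)|^{1/3}$ depending on whether one is away from critical points, in the viscous regime $|b(\gamma_j)-\lambda|\ll\varepsilon^{(m+1)/(m+3)}$, or outside it. You instead leave $f$ whole and decompose the \emph{domain} into the spectrally defined bad set $S=\{V>2|U|\}$ and its complement; the scaling identity $V\ell^2 = \varepsilon$ then makes the local Poincar\'e estimate on $S$ almost automatic. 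What your approach buys is a cleaner reduction (no commutator to absorb, and the role of the $|B'|$ regularization is transparent via the existence of buffer intervals $I^+\subset S^c$); what it asks in return is the verification, uniformly in $\lambda$, of the geometry in your step two --- that $S$ has boundedly many components of Airy length, that $V$ is comparable on $I\cup I^+$, and that a buffer of comparable length always exists just outside each component. The paper's cutoff decomposition forces this bookkeeping to happen one critical point at a time and therefore does not need to analyze the structure of a $\lambda$-dependent level set. Both are viable; your step two is where the remaining rigorous content lies, and your sketch of regimes (i) and (ii) is the right dichotomy (with the $\pm$ cases of $m$ odd/even handled by the evenness of $|y-\gamma_j|^m$ in $|B'|$ and the $4\sigma_\sharp$ separation of critical points preventing merging across $\gamma_j$'s). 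One small bookkeeping point: to make the $\sigma_0$-absorption at the end logically clean, the constant $C_b$ in your squared inequality should be proved for a $\sigma_0$-independent version of $|B'|$ (say with the prefactor replaced by $1$) before multiplying by $(2\sigma_0)^{2/3}$; otherwise the bad set, the lengths $\ell$, and the constant all depend on $\sigma_0$ simultaneously, which slightly obscures the quantifier order.
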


\begin{proof}
Define 
\begin{align}
    f_j(y) := \chi_j(y) f(y)
\end{align}
where $\chi_j$ is defined in \eqref{chij}. We first demonstrate that the proof will follow (with a smaller $\sigma_0$) if we can prove it for $f_j$:
\begin{align}
    \norm{ \varepsilon^{1/6} |B'(y)|^{1/3}  f}_{L^2(\T)} &\leq \sum_{j \geq 0} \norm{ \varepsilon^{1/6} |B'(y)|^{1/3}  f_j}_{L^2(\T)} \\
    &\leq \sum_{j \geq 0 } \left(\norm{|b(y) - \lambda|^{1/2} f_j}_{L^2(\T)}  +   \norm{\varepsilon^{1/2}\p_y f_j}_{L^2(\T)}\right)\\
    & \lesssim \norm{|b(y) - \lambda|^{1/2} f}_{L^2(\T)}  +  \norm{\varepsilon^{1/2}\p_y f}_{L^2(\T)} + \sum_{j\geq 0}\norm{ \varepsilon^{1/2}f \p_y \chi_j}_{L^2(\T)}. 
\end{align}
Since we have 
\begin{align}
    \norm{ \varepsilon^{1/2}f \p_y \chi_j}_{L^2(\T)} \lesssim \varepsilon^{1/2}\norm{f}_{L^2(\T)} \ll \norm{ \varepsilon^{1/6} |B'(y)|^{1/3}  f}_{L^2(\T)}
\end{align}
for $\varepsilon$ sufficiently small depending on $\sigma_0$, it follows that we can absorb this term in the left-hand side. See Remark~\ref{rmk:notationconvention} for interpreting the $\lesssim, \ll$ notation.  With this simplification, we consider two cases: $j = 0$ and $j > 0$.  

Assume $j = 0$: For $\lambda$ satisfying $ \inf_{y \in \T} |\lambda-b(y)| \gtrsim 1 $, \eqref{sg1} follows immediately. We thus focus on $\lambda$ inside the range of $b$ (for values close to but not in the range of $b$ the proof is simpler since $|b(y) - \lambda|$ does not vanish). Fix $\lambda \in \text{range}\, b$, and let $\{ y_{n} \}$ be an enumeration of the points in $b^{-1}(\lambda) \cap \text{supp} f_0 $.  
We decompose the domain into regions around $\{ y_n \}$ as 
\begin{align}
\label{sg3}
    \int_\T |f_0|^2 = \sum_n  \int_{I_{L_0}(y_n)} |f_0|^2    + \int_{ (\cup_{n} I_{L_0}(y_n))^c }|f_0|^2.
\end{align}
For $y \in (\cup_{n} I_{L_0}(y_n))^c  $, $|b(y) - \lambda|$ does not vanish and we have that 
\begin{align}
\label{sg4}
     \int_{ (\cup_{n} I_{L_0}(y_n))^c }|f_0|^2 &= \int_{ (\cup_{n} I_{L_0})(y_n))^c } \frac{1}{|b(y) - \lambda|} |b(y) - \lambda||f_0|^2 \\
     &\lesssim \frac{1}{L_0}  \int_{ (\cup_{n} I_{L_0})(y_n))^c } |b(y) - \lambda||f_0|^2 \notag 
\end{align}
where we have used that, on the support of $f_0$, $|b'(y)|$ is bounded below and 
\begin{align}
\label{sg5}
    |b(y) - \lambda| \approx |y- y_n|. 
\end{align}
Now we estimate the integrals over $I_{L_0}(y_n)$. 
Recall that for any $g \in C^\infty(\T)$ which vanishes and for any interval $I \subset \T $, we have the inequality 
\begin{align}
\label{sg7}
    \int_{I} |g|^2  \lesssim  |I| \norm{g}_{L^2(\T)}\norm{\p_yg}_{L^2(\T)}.
\end{align}
Therefore,
\begin{align}
\label{sg8}
    \int_{I_{L_0}(y_n)} |  f_0|^2 \lesssim |L_0| \norm{  f_0}_{L^2(\T)}  \norm{ \p_yf_0}_{L^2(\T)} .
\end{align}
Combining \eqref{sg4} and \eqref{sg8} yields
\begin{align}
\label{sg9}
    \int_{\T} |f_0|^2 \lesssim  \norm{\p_y f_0}_{L^2(\T) }^2  L_0^2  +  \frac{1}{L_0} \int_{ \T  }  |b(y) - \lambda||f_0|^2. 
\end{align}
 Choosing 
\begin{align}
    L_0 = \varepsilon^{1/3}
\end{align}
yields
\begin{align}
\label{sg10}
    \varepsilon^{1/3}\norm{f_0}_{L^2(\T)}^2 \lesssim \varepsilon \norm{\p_y f_0}_{L^2(\T)}^2 + \int_{\T} |b(y) - \lambda||f_0|^2,
\end{align}
which completes the case $j = 0$.

Now, consider the case $j > 0$. Fix a critical point $\gamma_j$ which we take to be of order $m$. Let $\{y_n\}$ 
be an enumeration of the points in $b^{-1}(\lambda) \cap \text{supp } f_j$ (this has cardinality bounded by two due to the support assumptions of $f_j$). We will break the proof into cases depending on how close $\lambda$ is to $b(\gamma_j)$.  We  will refer to the set of spectral values $\lambda$ satisfying  $|b(\gamma_j) - \lambda| \ll \varepsilon^{\frac{m+1}{m+3}}$ as being in the ``viscous regime."   Assume that $\lambda$ is in the viscous regime.
Here, we let 
\begin{align}
    L_j = \varepsilon^{1/(m + 3)},
\end{align}
and we decompose 
\begin{align}
    \int_\T |f_j|^2 = \int_{I_{L_j}(\gamma_j)} |f_j|^2 + \int_{(I_{L_j}(\gamma_j))^c}|f_j|^2.
\end{align}
The integral over $I_{L_j}(\gamma_j)$ is estimated as in \eqref{sg8} to obtain 
\begin{align}
\label{sg16}
   \int_{I_{L_j}(\gamma_j)} |f_j|^2 \lesssim |L_j| \norm{  f_j}_{L^2(\T)}  \norm{ \p_yf_j}_{L^2(\T)}. 
\end{align}
We now estimate the integral over the complement as 
\begin{align}
    \int_{(I_{L_j}(\gamma_j))^c}|f_j|^2 &= \int_{(I_{L_j}(\gamma_j))^c}  \frac{|b(y) -b(\gamma_j)| }{|b(y) -b(\gamma_j)|}   |f_j|^2 \lesssim  \frac{1}{L_j^{m+1}}\int_{\T}  |b(y) -b(\gamma_j)| |f_j|^2  \\
    &\lesssim  \frac{1}{L_j^{m+1}}\int_{\T}  |b(y) - \lambda| |f_j|^2 + \frac{|b(\gamma_j) - \lambda|}{L_j^{m+1}} \int_{\T}   |f_j|^2. \notag
\end{align}
Combined with the estimate~\eqref{sg16} and Young's inequality, this implies\footnote{The implicit constant appearing in the definition of the viscous regime is fixed by the requirement that the term $\frac{|b(\gamma_j) - \lambda |}{L_j^{m + 1}}\norm{f_j}_{L^2(\T)}^2 $ can be made small enough so that \eqref{final} holds. }
\begin{align}
\label{final}
    \varepsilon^{\frac{m+1}{m+3}} \norm{f_j}_{L^2(\T)}^2 \lesssim \varepsilon\norm{\p_yf_j}_{L^2(\T)}^2 + \int_{\T} |b(y) - \lambda| |f_j|^2. 
\end{align}
Now we assume that $|b(\gamma_j) -\lambda| \gtrsim \varepsilon^{\frac{m+1}{m+3}}$. Proceeding as in \eqref{sg3}, we have 
\begin{align}
\label{sg11}
    \int_\T |f_j|^2 = \sum_n  \int_{I_{L_j}(y_n)} |f_j|^2    + \int_{ (\cup_{n} I_{L_j}(y_n))^c }|f_j|^2.
\end{align}
For $y \in (\cup_{n} I_{L_j}(y_n))^c  $,  we have
\begin{align}
\label{sg12}
     \int_{ (\cup_{n} I_{L_j}(y_n))^c }|f_j|^2 &= \int_{ (\cup_{n} I_{L_j}(y_n))^c } \frac{1}{|b(y) - \lambda|} |b(y) - \lambda||f_j|^2 \\
     &\lesssim \frac{1}{L_j  \min_n |b'(y_n)|}  \int_{ (\cup_{n} I_{L_j}(y_n))^c } |b(y) - \lambda||f_j|^2 \notag 
\end{align}
where we have used that $\min_n |b'(y_n)| > 0$ since we are assuming that we are outside of the viscous regime and 
\begin{align}
\label{sg13}
    |b(y) - \lambda| \gtrsim |y- y_n| \min_n |b'(y_n)|, \quad y \in (\cup_n I_{L_j}(y_n))^c \cap \text{supp} f_j.  
\end{align}
In the case where the cardinality of $\{ y_n \}$ is two, we note that  $|b'(y_1)| \approx |b'(y_2)|$, as can be seen from the Taylor expansion of $b$ around $\gamma_j$.

The integral over $I_{L_j}(y_n)$ can be estimated as 
\begin{align}
\label{sg14}
   \int_{I_{L_j}(y_n)} |f_j|^2 \lesssim |L_j| \norm{  f_j}_{L^2(\T)}  \norm{ \p_yf_j}_{L^2(\T)}.
\end{align}
Combining \eqref{sg12} and \eqref{sg14} yields
\begin{align}
\label{sg15}
    \int_{\T} |f_j|^2 \lesssim  \norm{\p_y f_j}_{L^2(\T) }^2  L_j^2  +  \frac{1}{L_j  \min_n |b'(y_n)| } \int_{ \T  }  |b(y) - \lambda||f_j|^2.
\end{align}
Choosing 
\begin{align}
    \label{eq:Ljexample}
    L_j \ll \frac{\varepsilon^{1/3}}{\min_n |b'(y_n)|^{1/3}},
\end{align}
yields\footnote{The implicit constant appearing in $L_j$ depends on the choice of constant needed to define the viscous regime. }
\begin{align}
   \norm{\varepsilon^{1/3} \min_n  |b'(y_n)|^{2/3}  f_j      }_{L^2(\T)}^2 \lesssim \varepsilon \norm{\p_y f_j}_{L^2(\T)}^2 + \int |b(y) - \lambda||f_j|^2.
\end{align}
On the support of $f_j$, we have  
\begin{align}
    \varepsilon^{1/3}|b'(y)|^{2/3}  \lesssim \varepsilon^{1/3} \min_n |b'(y_n)|^{2/3} + |b(y) - \lambda|,
\end{align}
where we have used that $\min_n |b'(y_n)| \approx \max_n |b'(y_n)| $. This completes the proof outside the viscous regime. Therefore, by taking $\delta_0$ sufficiently small, this completes the proof of the lemma. \end{proof}

\begin{remark}[$\lesssim, \ll$]
\label{rmk:notationconvention}
    We wish to clarify a notational convention used above. Consider non-negative numbers $A,B$.
    \begin{itemize}
        \item The notation $A \lesssim B$ as a conclusion means that there exists a constant $C > 0$ such that $A \leq CB$; the parameters on which the implicit constant depends may not be stated explicitly when they are clear enough.
        \item The notation $A \ll B$ as a hypothesis means that there exists a constant $c_0 > 0$ such that if $A \leq c_0 B$, then the conclusion holds; similarly, the dependence of $c_0$ may not be made explicit.
        \item The notation $A \ll B$ as a conclusion means that, for any $\varepsilon_0 > 0$, the parameters of the problem can be chosen\footnote{e.g., by shrinking $\varepsilon$ in~\eqref{eq:Ljexample}} such that $A \leq \varepsilon_0 B$.
        \item The notation $A \lesssim B$ as a hypothesis means that, for any $c > 0$, the conclusion holds under the assumption that $A \leq c B$; the constants in the proceeding analysis may depend on $c$.
    \end{itemize}

    The above analysis was divided into the ``viscous regime" $|b(\gamma_j) - \lambda| \ll \varepsilon^{\frac{m+1}{m+3}}$ and the complementary regime $|b(\gamma_j) - \lambda| \gtrsim \varepsilon^{\frac{m+1}{m+3}}$, where $\ll$ and $\gtrsim$ are interpreted as hypotheses. The prefactor defining the ``viscous regime" is shrunk to ensure some portion of the analysis requiring smallness goes through, while the analysis in the complementary region is not particularly sensitive to its defining prefactor.
\end{remark}

We now prove an important lemma which establishes control on quadratic in $K(y,z,\lambda)$ norms in terms of linear control at the pole $z$. 
\begin{lemma}
\label{gbl:bdds}
Let $K(y,z, \lambda)$ solve \eqref{AiryK}. Then
\begin{align}
    \norm{\varepsilon^{1/6} |B'(y)|^{1/3} K  }_{L^2(\T)}^2 + \norm{\varepsilon^{1/2} \p_y K}_{L^2(\T)}^2 + \norm{|b(y) -\lambda| K}_{L^2(\T)}^2 \lesssim |K(z,z,\lambda)|. 
    \end{align}
\end{lemma}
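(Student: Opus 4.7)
The plan is to test the Airy equation~\eqref{AiryK} against the fundamental solution itself with two different multipliers---$\bar K$ and $(b-\lambda)\bar K$---and extract information from the real and imaginary parts. The three claimed norms on the left-hand side are then tied to $|K(z,z,\lambda)|$ by combining these identities with the spectral-gap Lemma~\ref{spec:gap}.

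First I would multiply~\eqref{AiryK} by $\bar K(y,z,\lambda)$ and integrate over $y \in \T$. After one integration by parts on the Laplacian, this yields the basic energy identity
\begin{equation*}
\varepsilon \|\p_y K\|_{L^2(\T)}^2 + i \int_\T (b(y)-\lambda) |K|^2 \, dy = \overline{K(z,z,\lambda)}.
\end{equation*}
The real part immediately gives $\varepsilon \|\p_y K\|_{L^2}^2 = \Re K(z,z,\lambda) \leq |K(z,z,\lambda)|$, which is the middle term of the claim. The imaginary part gives the signed identity $\int(b-\lambda)|K|^2 = -\Im K(z,z,\lambda)$, a crucial auxiliary below.

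Second, to bound $\|(b-\lambda)K\|_{L^2}^2$, I would multiply~\eqref{AiryK} by $(b(y)-\lambda)\bar K$ and integrate. Integrating by parts on the Laplacian and taking imaginary parts---so that the real term $\varepsilon \int (b-\lambda)|\p_y K|^2$ drops out---produces
\begin{equation*}
\int (b-\lambda)^2 |K|^2 \, dy = -(b(z)-\lambda)\, \Im K(z,z,\lambda) - \varepsilon\, \Im\! \int b'(y)(\p_y K)\bar K \, dy.
\end{equation*}
The commutator term $\varepsilon\int b'(\p_yK)\bar K$ is handled by Cauchy--Schwarz combined with the bound $\varepsilon\|\p_yK\|^2 \leq |K(z,z,\lambda)|$ from the first step and, if necessary, a Gagliardo--Nirenberg-type estimate $|K(z,z,\lambda)| \lesssim \|K\|_{L^2} + \|K\|_{L^2}^{1/2}\|\p_y K\|_{L^2}^{1/2}$; a Young absorption then closes the $\|(b-\lambda)K\|_{L^2}^2$ bound.

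Third and finally, for $\|\varepsilon^{1/6}|B'(y)|^{1/3}K\|_{L^2}^2$ I would apply the spectral-gap inequality of Lemma~\ref{spec:gap} directly to $K(\cdot,z,\lambda)$:
\begin{equation*}
\|\varepsilon^{1/6}|B'(y)|^{1/3} K\|_{L^2}^2 \lesssim \||b-\lambda|^{1/2} K\|_{L^2}^2 + \varepsilon \|\p_y K\|_{L^2}^2.
\end{equation*}
The second term is already controlled. For the first, I would split into $\{|b-\lambda|\geq c\}$ and $\{|b-\lambda|< c\}$: on the former, $\int|b-\lambda||K|^2 \leq c^{-1}\int(b-\lambda)^2|K|^2$, handled by the preceding step; on the latter, I would use Cauchy--Schwarz combined with the signed identity $\int(b-\lambda)|K|^2 = -\Im K(z,z,\lambda)$ from the first step to gain the missing factor and absorb into the main quantities. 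The main obstacle throughout is controlling the prefactor $(b(z)-\lambda)\,\Im K(z,z,\lambda)$ in the weighted identity without circular reasoning: the fact that $\Im K(z,z,\lambda)$ can be substantially smaller than $|K(z,z,\lambda)|$ when $b-\lambda$ changes sign is exactly what makes the argument delicate, and it is this bootstrap---cross-absorbing between the weighted identity, the basic identity, and the pointwise Gagliardo--Nirenberg bound---that I expect to be the technical heart of the proof.
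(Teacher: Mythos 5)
Your first step — testing \eqref{AiryK} with $\bar K$ and taking real/imaginary parts — coincides exactly with the paper's opening move and correctly yields $\varepsilon\int|\p_y K|^2 \le |K(z,z,\lambda)|$. You also correctly identify the technical heart of the lemma: when $\lambda$ lies in the range of $b$, the signed identity $\int (b-\lambda)|K|^2 = -\Im K(z,z,\lambda)$ has cancellation across the critical layer and does not control $\int |b-\lambda||K|^2$. (Incidentally, it is $\int |b-\lambda||K|^2$, not $\int (b-\lambda)^2|K|^2$, that is actually needed as the input to the spectral gap inequality \eqref{sg1} and that the paper proves; the exponent in the lemma statement appears to be a typographical slip.)

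However, your proposed resolution does not close. The quadratic-in-$(b-\lambda)$ identity obtained from the multiplier $(b-\lambda)\bar K$ gives
\begin{equation*}
\int (b-\lambda)^2 |K|^2 \;\lesssim\; |b(z)-\lambda|\,|K(z,z,\lambda)| \;+\; \varepsilon\Big|\!\int b'\,(\p_y K)\bar K\Big| \, ,
\end{equation*}
and converting this into a bound for $\int |b-\lambda||K|^2$ requires a split at some threshold $c>0$. The near-region piece obeys $\int_{|b-\lambda|<c}|b-\lambda||K|^2 \le c\int |K|^2$, and since the only available control on $\|K\|_{L^2}$ comes from the spectral gap quantity $X := \varepsilon^{1/3}\int|B'|^{2/3}|K|^2 \gtrsim \varepsilon^{(N+1)/(N+3)}\|K\|_{L^2}^2$, you need $c \ll \varepsilon^{(N+1)/(N+3)}$ to absorb. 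But the far-region piece then costs $c^{-1}\gg \varepsilon^{-(N+1)/(N+3)}$ against the commutator term, and estimating $\varepsilon\int b'(\p_y K)\bar K$ by Cauchy--Schwarz plus the real-part bound gives only $\lesssim \varepsilon^{1/(N+3)} |K(z,z,\lambda)|^{1/2} X^{1/2}$. Young's inequality on $c^{-1}$ times this produces a factor $c^{-2}\varepsilon^{2/(N+3)}$ in front of $|K(z,z,\lambda)|$, which is bounded only if $c\gtrsim\varepsilon^{1/(N+3)}$. The two constraints on $c$ are incompatible for $N\ge1$, so the bootstrap via Cauchy--Schwarz and the signed identity cannot close, no matter how one distributes the weights. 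The signed identity itself cannot rescue the near-region, since it only controls the \emph{algebraic} integral $\int_{|b-\lambda|<c}(b-\lambda)|K|^2$, not its absolute-value counterpart.

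The missing ingredient, and the one the paper supplies, is a smooth sign-approximating multiplier $\phi_j \approx \sgn(b(y)-\lambda)$ localized near each critical layer, engineered so that $|\phi_j|\le 1$, the discrepancy $|(b-\lambda)(\phi_j - \sgn(b-\lambda))|\lesssim \varepsilon^{1/3}(|b'(y_n)|+\varepsilon^{m/(m+3)})^{2/3}$ is precisely of the size of the effective potential, and $\varepsilon|\phi_j'|^2\lesssim \varepsilon^{1/3}(|b'(y_n)|+\varepsilon^{m/(m+3)})^{2/3}$ as well. Testing the localized equation \eqref{locK} with $\phi_j\bar K_j$ and taking the imaginary part then yields $\int |b-\lambda||K_j|^2$ directly (since $\phi_j(b-\lambda)\approx |b-\lambda|$), with all error terms absorbing cleanly into $\tau'\int \varepsilon^{1/3}|B'|^{2/3}|K_j|^2$; summing over $j$ and invoking Lemma~\ref{spec:gap} finishes the proof. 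Your multiplier $(b-\lambda)\bar K$ does appear in the paper, but in the monotone Appendix~\ref{sec:monotoneresolventestimate}, where a single critical layer and interpolation save the day — it does not adapt to the critical-point geometry here.
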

\begin{remark}
 As will be clear from the proof, for $K$ solving \eqref{AiryKass}, we have 
 \begin{align}
    \norm{\varepsilon^{1/6} |B'(y)|^{1/3} K  }_{L^2(\T)}^2  + \alpha \norm{K}_{L^2(\T)}^2 +  \norm{\varepsilon^{1/2} \p_y K}_{L^2(\T)}^2 + \norm{|b(y) -\lambda| K}_{L^2(\T)}^2 \lesssim |K(z,z,\lambda)|. 
    \end{align}
\end{remark}

\begin{proof}
Testing \eqref{AiryK} with $\Bar{K}(y,z)$ and taking the real part yields 
\begin{align}
\label{gb1}
    \varepsilon \int_\T |\p_y K|^2 \leq |K(z,z)|.
\end{align}
If $\lambda \notin \text{range}\, b$, then testing \eqref{AiryK} with $\Bar{K}$ and taking the imaginary part yields 
\begin{align}
\label{gb2}
   \int_\T |b(y) - \lambda| |K|^2 \leq |K(z,z)|,  
\end{align}
which implies the desired result using \eqref{sg1}. 
Now we handle the case with $\lambda \in \text{range } b$.
 Let $\{ y_n \}$ be an enumeration of the points in $b^{-1}(\lambda) \cap \text{supp } K_j$ where $K_j$ satisfies \eqref{locK}.
For $j >0$, let $\gamma_j$ be a critical point of order $m$.
Let $\phi_{j}$ be an approximation of the sign of $b(y) - \lambda$ satisfying the following properties\footnote{For the construction of $\phi_j$, when $|y_n - \gamma_j| \gg \varepsilon^{\frac{m}{m+3}}$,  then $\phi_j'$ is supported around the individual critical layers. For $|y_n - \gamma_j| \lesssim  \varepsilon^{\frac{m}{m+3}}$ the critical layers ``overlap" and instead the center of the support of $\phi_j'$ is at the critical point $\gamma_j$.}:
\begin{enumerate}
    \item 
    \begin{align}
    \label{cut0}
      |\phi_{j}| \leq 1
    \end{align}
    \item 
    \begin{align}
    \label{cut1}
        |\phi_{j}'| \lesssim \frac{(|b'(y_n)| + \varepsilon^{ \frac{m}{m+ 3}}   )^{1/3}}{   \varepsilon^{1/3}    }
    \end{align}
    \item 
    \begin{align}
    \label{cut2}
        |(b(y) - b(y_n))(\phi_j - \text{sgn}(b(y) - b(y_n))| \lesssim (|b'(y_n)| + \varepsilon^{\frac{m}{m + 3}})^{2/3} \varepsilon^{1/3}.
    \end{align}
\end{enumerate}  


Testing \eqref{locK} with $\phi_j  \Bar{K}_j$ and taking the imaginary part yields:
\begin{align}
   &\int_\T |b(y) - \lambda| |K_j|^2  \lesssim \int_\T (b(y) - \lambda)(\phi_j - \text{sgn}(b(y) - \lambda ))|K_j|^2  + \varepsilon\int_\T |\phi_j'| |\p_y K_j||K_j|\\
   &  +    \varepsilon \int_\T |\p_y K| |K| +  |K(z,z)|  \\
     &\lesssim \int_\T (b(y) - \lambda)(\phi_j - \text{sgn}(b(y) - \lambda ))|K_j|^2 + \tau \varepsilon  \int_\T |\phi_j'|^2 |K_j|^2    + C_\tau|K(z,z)| 
\end{align}
where $\tau \in (0,1)$ and we used \eqref{gb1}. 
By  modifying $\phi_j$ so it changes sign over a smaller region, for any $\theta \in (0,1)$, we have 
\begin{align}
   & \int_\T (b(y) - b(y_n))(\phi_j - \text{sgn}(b(y) - b(y_n) ))|K_j|^2  \leq  \theta   \varepsilon^{1/3} (|b'(y_n)| + \varepsilon^{\frac{m}{m+3}}   )^{2/3}   \int_{\text{supp} (\phi_j') } |K_j|^2  \\
   & \varepsilon  \int_\T |\phi_j'|^2 |K_j|^2 \leq C_\theta\varepsilon^{1/3} (|b'(y_n)| + \varepsilon^{\frac{m}{m+3}}   )^{2/3} \int_{ \text{supp}(\phi_j') } |K_j|^2.
\end{align}
For any $\tau' >0$, taking $\theta, \tau$ sufficiently small   yields 
\begin{align}
    \int_\T |b(y) - \lambda| |K_j|^2  \lesssim  \tau' \int_\T\varepsilon^{1/3} (|b'(y)| + \varepsilon^{\frac{m}{m+3}}   )^{2/3} |K_j|^2  + |K(Z,Z)|.
\end{align}
Using a similar proof for $K_0$, we can obtain 
\begin{align}
    \int_\T |b(y) - \lambda| |K_0|^2  \lesssim  \tau' \int_\T\varepsilon^{1/3} |K_0|^2  + |K(Z,Z)|.
\end{align}
Summing over $j$ yields
\begin{align}
    \int_\T |b(y) - \lambda| |K|^2  \lesssim \sigma_0^{-1}\tau' \int_\T \varepsilon^{1/3}  |B'(y)|^{2/3}|K|^2 + |K(Z,Z)|.  
\end{align}
 Using Lemma \ref{spec:gap} and choosing $\tau'$ and  $\varepsilon$ sufficiently small (recalling that  $\varepsilon \ll \varepsilon^{1/3} |B'(y)|^{2/3}$ for $\varepsilon \ll 1$) yields
\begin{align}
\label{gb3}
    \norm{\varepsilon^{1/6}|B'(y)|^{1/3} K}_{L^2(\T)}^2 \lesssim |K(z,z)|,
\end{align}
as desired. 
\end{proof}

Using Lemma \ref{gbl:bdds} we can now deduce pointwise ``amplitude" bounds on the solution of \eqref{AiryK} at the pole $z$.
\begin{lemma}
\label{lem:diag}
The solution of \eqref{AiryK} satisfies
   \begin{align}
    |K(z, z, \lambda)| \lesssim \frac{\varepsilon^{-1/2}}{ (  \varepsilon^{1/3} |B'(z)|^{2/3}  + |b(z) - \lambda|)^{1/2}  }.
\end{align}
\end{lemma}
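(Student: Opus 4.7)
The target bound $|K(z,z,\lambda)| \lesssim \varepsilon^{-1/2}\mathcal{P}(z)^{-1/2}$ is naturally stated in terms of the effective potential
\begin{equation}
\mathcal{P}(y) := |b(y)-\lambda| + \varepsilon^{1/3}|B'(y)|^{2/3},
\end{equation}
which already governs the weighted $L^2$ bounds of Lemma~\ref{gbl:bdds}. The plan is to upgrade those weighted global bounds to a pointwise control at $y=z$ by a one-dimensional Sobolev embedding on an interval of the natural resolvent length $L \approx \varepsilon^{1/2}\mathcal{P}(z)^{-1/2}$.

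The first ingredient is the weighted global energy bound
\begin{equation}
\label{eq:weightedbd}
\int_\T \mathcal{P}(y)\,|K(y,z,\lambda)|^2\,dy + \varepsilon\|\partial_y K(\cdot,z,\lambda)\|_{L^2(\T)}^2 \lesssim |K(z,z,\lambda)|,
\end{equation}
which I would recover from Lemma~\ref{gbl:bdds} together with the intermediate estimate $\int|b-\lambda||K_j|^2 \lesssim |K(z,z,\lambda)|$ extracted from the $\phi_j$-test-function step in its proof. The second ingredient is the elementary Sobolev inequality: for $L>0$ small enough that $I_L(z) := [z-L,z+L]\subset\T$, the identity $K(z,z,\lambda) = K(w,z,\lambda) + \int_w^z \partial_y K$, followed by Cauchy--Schwarz and averaging over $w \in I_L(z)$, yields
\begin{equation}
\label{eq:sobproofplan}
|K(z,z,\lambda)|^2 \lesssim L^{-1}\|K(\cdot,z,\lambda)\|_{L^2(I_L(z))}^2 + L\,\|\partial_y K(\cdot,z,\lambda)\|_{L^2(I_L(z))}^2.
\end{equation}

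Third, I would choose $L = c\,\varepsilon^{1/2}\mathcal{P}(z)^{-1/2}$ for a small absolute constant $c$, since this is the scale that balances the two terms on the right-hand side of~\eqref{eq:sobproofplan}. The key geometric claim is
\begin{equation}
\label{eq:slowvar}
\mathcal{P}(y) \gtrsim \mathcal{P}(z) \quad \text{for every } y \in I_L(z);
\end{equation}
granting \eqref{eq:slowvar}, \eqref{eq:weightedbd} gives $\|K\|_{L^2(I_L(z))}^2 \lesssim |K(z,z,\lambda)|/\mathcal{P}(z)$, and \eqref{eq:sobproofplan} collapses to
\begin{equation}
|K(z,z,\lambda)| \lesssim \frac{1}{L\,\mathcal{P}(z)} + \frac{L}{\varepsilon},
\end{equation}
which evaluates to $\varepsilon^{-1/2}\mathcal{P}(z)^{-1/2}$ at the chosen $L$.

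The main obstacle is the slow-variation claim~\eqref{eq:slowvar}, which must be verified uniformly across three regimes. In the monotone regime away from critical points and critical layers, both $|b-\lambda|$ and $|B'|$ vary Lipschitz-continuously, and $L \lesssim \varepsilon^{1/3}$ is more than short enough. Near a critical layer ($|b(z)-\lambda|\approx 0$), the dominant contribution $\varepsilon^{1/3}|B'(z)|^{2/3}$ remains stable because $|B'|$ itself is bounded away from zero. Most delicately, in the viscous region $|z-\gamma_j|\lesssim \varepsilon^{1/(m+3)}$ around a critical point of order $m$, the regularization term $\varepsilon^{m/(m+3)}$ built into \eqref{B} is precisely what guarantees $|B'(y)|\approx|B'(z)|$ on the natural viscous length $L\approx\varepsilon^{1/(m+3)}$. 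Matching these scales in each case, together with a short argument to bound $|b(y)-b(z)| \lesssim L\sup_{I_L(z)}|b'|$ by $L\sup_{I_L(z)}|B'|$ near critical points, is the technical heart of the proof; once it is in hand, the optimization above is immediate.
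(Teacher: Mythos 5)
Your proposal is correct and follows essentially the same route as the paper: the paper likewise pulls the weighted $L^2$ bound from Lemma~\ref{gbl:bdds}, applies the one-dimensional Sobolev inequality~\eqref{d1} on an interval of the optimized length $L \approx \varepsilon^{1/2}\mathcal{P}(z)^{-1/2}$, and uses the constraint $L \ll \varepsilon^{1/3}/|B'(z)|^{1/3}$ (enforced by a small implicit constant in the choice of $L$) to guarantee that the effective potential is comparable on $I_L(z)$, splitting into the cases $z\in\operatorname{supp}K_0$ and $z\in\operatorname{supp}K_j$ rather than your three regimes, but with the same content. Your spelling-out of the slow-variation claim~\eqref{eq:slowvar} across the monotone, critical-layer, and viscous regimes is slightly more explicit than the paper's treatment but is not a different argument.
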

\begin{remark}
    The solution of \eqref{AiryKass} satisfies 
    \begin{align}
         |K(z, z, \lambda)| \lesssim \frac{\varepsilon^{-1/2}}{ ( \alpha +   \varepsilon^{1/3} |B'(z)|^{2/3}  + |b(z) - \lambda|)^{1/2}  }.
    \end{align}
\end{remark}
\begin{proof}
For any open interval $I \subset \T$ and all $f \in H^1(I)$  we have the inequality 
    \begin{align}
    \label{d1}
        \norm{f}_{L^\infty(I)}^2 \lesssim \norm{f}_{L^2(I)}^2 |I|^{-1} + \norm{\p_yf}_{L^2(I)}^2 |I|. 
    \end{align}
Let $L$ be the length, to be determined later in the proof, of the interval $I_{L(z)}$. We have from Lemma \ref{gbl:bdds} that 
\begin{subequations}
    \begin{align}
    &\int_{I_{L}(z)} \varepsilon |\p_yK|^2 dy \lesssim |K(z,z)|, \label{d2}\\
   & \int_{I_{L}(z)} (\varepsilon^{1/3} |B'(y)|^{2/3} + |b(y) - \lambda|   )   |K|^2  \lesssim |K(z,z)|. \label{d2.5}
\end{align}
\end{subequations}
First assume that $z \in \text{supp } K_0$ defined in \eqref{locK}. Then $B'(y) \approx 1$. If $L$ satisfies 
\begin{align}
\label{d3}
    L \ll \varepsilon^{1/3}
\end{align}
then \eqref{d2.5} implies 
\begin{align}
\label{d4}
    \int_{I_L(z)} |K|^2 \lesssim \frac{|K(z,z)|}{\varepsilon^{1/3} + |b(z) - \lambda|  }. 
\end{align}
We also trivially have
    \begin{align}
    \label{d5}
        \int_{I_{L}(z)}  |\p_yK|^2 \lesssim \frac{|K(z,z)|}{\varepsilon}.
    \end{align}
Multiplying \eqref{d4} by $L^{-1}$ and \eqref{d5} by $L$ and then optimizing gives 
\begin{align}
\label{d6}
    L \approx \frac{ \varepsilon^{1/2}}{ (\varepsilon^{1/3}  + |b(z) - \lambda|)^{1/2} }
\end{align}
which satisfies \eqref{d3}, saturating at critical layers, by choosing a small enough implicit constant in the definition \eqref{d6} of $L$. Applying \eqref{d1} gives the desired result. 

Now consider the case where $z \in \text{supp }K_j $. Suppose that $\gamma_j$ is a critical point of order $m$ so that $B'(y) \approx |b'(y)| + \varepsilon^{\frac{m}{m+3}}$. If $L$ satisfies 
\begin{align}
    \label{d7}
    L \ll \frac{\varepsilon^{1/3}}{  |B'(z)|^{1/3}}, 
\end{align}
then \eqref{d2.5} implies 
\begin{align}
    \int_{I_L(z)} |K|^2 \lesssim \frac{|K(z,z)|}{\varepsilon^{1/3}|B'(z)|^{2/3} + |b(z) - \lambda|  }.
\end{align}
Proceeding as in \eqref{d6} yields
\begin{align}
    L \approx \frac{\varepsilon^{1/2}}{ (\varepsilon^{1/3} |B'(z)|^{2/3} + |b(z) - \lambda|)^{1/2}   }
\end{align}
which, again, is consistent with \eqref{d7} so long as we choose a sufficiently small implicit constant in \eqref{d7}. The result in this case then follows by \eqref{d1}. 
\end{proof}

In order to get bounds on the kernel away from the diagonal, we will use the so-called entanglement inequality \cite{J23}.

\begin{lemma}
\label{lem:ent}
Let $K$ satisfy \eqref{AiryK}. There exists $c_0\in (0,1)$ such that, for any  piecewise $C^1$ function $\varphi$ whose support does not contain $z$, we have
\begin{align}
\label{ent0}
    \int \left[|\varphi'(y)|^2 - c_0\varepsilon^{-1}  (\varepsilon^{1/3}|B'(y)|^{2/3}  +  |b(y) - \lambda| ) |\varphi(y)|^2\right] |K|^2 dy \geq 0. 
\end{align}
\end{lemma}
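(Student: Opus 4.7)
The plan is to adapt the strategy of Lemma~\ref{gbl:bdds}, using that $\mathrm{supp}\,\varphi \not\ni z$ so the Dirac source contributes nothing when $\varphi^2 \bar K$ (or a variant) is used as a test function. The target inequality \eqref{ent0}, rewritten as
\begin{equation*}
\int (\varepsilon^{1/3}|B'|^{2/3} + |b(y)-\lambda|)\,\varphi^{2}|K|^{2}\,dy \;\lesssim\; \varepsilon \int (\varphi')^{2}|K|^{2}\,dy,
\end{equation*}
is a localized, ``cutoff'' version of Lemma~\ref{gbl:bdds}, with the amplitude $|K(z,z,\lambda)|$ replaced by the Dirichlet-type energy $\varepsilon \int (\varphi')^{2}|K|^{2}$ coming from the cutoff boundary.

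First, I would test \eqref{AiryK} against $\varphi^2 \bar K$. The real part produces the basic identity $\varepsilon \int \varphi^2|\partial_y K|^2 = -\varepsilon \int \varphi \varphi' \partial_y |K|^2$, from which Cauchy--Schwarz yields
\begin{equation*}
\int \varphi^2 |\partial_y K|^2 \;\lesssim\; \int (\varphi')^2 |K|^2, \qquad \int |\partial_y(\varphi K)|^2 = \int (\varphi')^2 |K|^2,
\end{equation*}
using only the a.e.\ expansion of $|\partial_y(\varphi K)|^2$ (no $\varphi''$ needed, which matters for piecewise $C^1$ $\varphi$). Applying the spectral gap Lemma~\ref{spec:gap} to $u := \varphi K \in H^1(\T)$ then gives
\begin{equation*}
\int \varepsilon^{1/3}|B'|^{2/3}\varphi^2 |K|^2 \;\lesssim\; \int |b-\lambda|\,\varphi^2|K|^2 + \varepsilon \int (\varphi')^2 |K|^2,
\end{equation*}
reducing matters to controlling $\int |b-\lambda|\varphi^2 |K|^2$.

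Second, to estimate $\int |b-\lambda|\,\varphi^2|K|^2$ (which is signed if extracted from the imaginary part naively), I would test \eqref{AiryK} against $\varphi^2 \phi\,\bar K$, where $\phi$ is the approximate sign function for $b-\lambda$ constructed critical-point-by-critical-point as in the proof of Lemma~\ref{spec:gap}, obeying \eqref{cut0}--\eqref{cut2}. Taking imaginary parts and integrating by parts,
\begin{equation*}
\int (b-\lambda)\phi\,\varphi^2 |K|^2 = -\varepsilon \int (\varphi^2 \phi)' \Im(\partial_y K\,\bar K).
\end{equation*}
On $\mathrm{supp}\,\phi$, \eqref{cut2} allows replacement of $(b-\lambda)\phi$ by $|b-\lambda|$ up to an error of size $\varepsilon^{1/3}|B'|^{2/3}$ localized to $\mathrm{supp}\,\phi'$; on the right, the identity $\varepsilon|\phi'|^2 \lesssim \varepsilon^{1/3}|B'|^{2/3}$ from \eqref{cut1} together with Young's inequality and the $\int\varphi^2|\partial_y K|^2 \lesssim \int(\varphi')^2|K|^2$ bound from the first step produces, for any small $\tau > 0$,
\begin{equation*}
\int |b-\lambda|\,\varphi^2|K|^2 \;\leq\; C_\tau \varepsilon \int (\varphi')^2 |K|^2 \;+\; \tau \int \varepsilon^{1/3}|B'|^{2/3}\,\varphi^2|K|^2.
\end{equation*}

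Inserting this into the spectral gap estimate and choosing $\tau$ small (so that the $\varepsilon^{1/3}|B'|^{2/3}$ error is absorbed into the left-hand side) yields the claim. The main obstacle is the construction of $\phi$: at critical points of order $m$, one must distinguish the ``viscous'' regime $|b(\gamma_j)-\lambda| \ll \varepsilon^{(m+1)/(m+3)}$ (where $\phi$ must transition smoothly across $\gamma_j$) from the non-viscous regime (where $\phi$ changes sign at each critical layer individually), and verify that in both cases the bounds \eqref{cut0}--\eqref{cut2} are compatible with the Young's inequality absorption. This is precisely where the flexibility of the effective potential $|B'|$ and the choice of $\sigma_0$ in \eqref{B} play the essential role, exactly as in Lemmas~\ref{spec:gap} and \ref{gbl:bdds}.
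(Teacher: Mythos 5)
Your proposal is correct and follows essentially the same route as the paper: test against $\varphi^2\bar K$ for the real (gradient) part, test against $\varphi^2\phi\bar K$ with the approximate sign function for the imaginary (potential) part, and close via the spectral gap Lemma~\ref{spec:gap} applied to $\varphi K$ together with Young absorption of the $\varepsilon^{1/3}|B'|^{2/3}$ errors. The exact cancellation $\int |\partial_y(\varphi K)|^2 = \int (\varphi')^2|K|^2$ you note is a clean way to sidestep $\varphi''$, consistent with the paper's integration-by-parts handling in~\eqref{ent1}.
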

\begin{remark}
The bound for $K$ solving \eqref{AiryKass} is
    \begin{align}
\label{ent0.5}
    \int \left[|\varphi'(y)|^2 - c_0\varepsilon^{-1}  ( \alpha + \varepsilon^{1/3}|B'(y)|^{2/3}  +  |b(y) - \lambda| ) |\varphi(y)|^2\right] |K|^2 dy \geq 0.
\end{align}
\end{remark}
\begin{proof} 
    Testing equation \eqref{AiryK} with $\varphi^2 \Bar{K}$ and taking the  real part  yields 
    \begin{align}
    \label{ent1}
        \varepsilon \int \varphi^2 |\p_y K|^2 dy \lesssim \varepsilon \int |\p_yK| |\varphi| |\varphi'| |K|  dy  \lesssim \varepsilon \tau \int \varphi^2 |\p_y K|^2 + C_\tau \varepsilon \int |\varphi'||K|^2. 
    \end{align}
   We chose $\tau \in (0,1)$ sufficiently small to absorb the $\varepsilon \tau$ term on the left hands side and obtain
    \begin{align}
    \label{ent2}
        \int \varphi^2 |\p_y K|^2 dy \lesssim \int |K|^2 |\varphi'|^2.
    \end{align}
We now want to gain control on the potential $b(y) - \lambda$. The  proof proceeds similarly to Lemma \ref{gbl:bdds} except that we test \eqref{AiryK} with $\phi_j \varphi \bar{K}$. Proceeding as in Lemma \ref{gbl:bdds} from \eqref{gb2}, we can deduce that 
 \begin{align}
 \label{ent10}
        \int (\varepsilon^{1/3} |B'(y)|^{2/3}  +  |b(y) - \lambda|) |K|^2 \varphi^2 \lesssim \varepsilon \int |\varphi'|^2 |K|^2, 
    \end{align} 
    from which the desired result follows by choosing $c_0$ sufficiently small.  
\end{proof}
Before giving the proof of the fundamental solution bounds we need one more essential technical lemma. One can proceed directly to the proof of the fundamental solution bounds and refer back to the lemma when needed.   
\begin{lemma}
\label{lemlow}
    We have for all $y, z \in \T$ and $\lambda \in \R$ that 
    \begin{align}
    \label{ass}
        \int_z^y|b(x) - \lambda|^{1/2} dx \gtrsim |y-z| (|b(y) - \lambda|^{1/2} + |b(z) - \lambda|^{1/2}) + |y-z| \mathbbm{1}_{|y-z| \geq \sigma_\sharp}.
    \end{align}
\end{lemma}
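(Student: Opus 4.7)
Set $L := |y-z|$, $\hat{f}(x) := |b(x)-\lambda|^{1/2}$, $A := \hat{f}(y)$, $B := \hat{f}(z)$, and $M := \|b'\|_\infty \leq \sigma_\sharp^{-1}$. The argument rests on two ingredients: the $\tfrac{1}{2}$-H\"older estimate $|\hat{f}(x) - \hat{f}(x')| \leq M^{1/2}|x-x'|^{1/2}$, which follows from $|\sqrt{s}-\sqrt{t}|\leq\sqrt{|s-t|}$ for $s,t\geq 0$; and the Taylor lower bound $|b(x)-b(\gamma_j)|\gtrsim|x-\gamma_j|^{m_j+1}$ in a neighborhood of a critical point of order $m_j\leq N$, a consequence of Assumption~\ref{assump:b}.

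I split the proof into three cases. \emph{Case 1 ($\max(A,B)\geq 2M^{1/2}L^{1/2}$):} the H\"older bound yields $\hat{f}\geq\max(A,B)/2$ on $[z,y]$ and $\min(A,B)\leq 2\max(A,B)$, so $\int_z^y\hat{f}\geq\tfrac{L}{4}(A+B)$ immediately. \emph{Case 2 ($L\geq\sigma_\sharp$):} one additionally needs $\int_z^y\hat{f}\gtrsim L$. Since $b$ is monotone between consecutive critical points, $b-\lambda$ has at most $J_0$ zeros on $\T$; hence $[z,y]$ contains a subinterval of length $\gtrsim\sigma_\sharp/J_0$ lying at distance $\gtrsim\sigma_\sharp/J_0$ from every such zero, on which $|b-\lambda|$ is bounded below by a positive constant depending only on $b$. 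When Case 1 fails, $A,B\lesssim L^{1/2}\lesssim 1$, so $L(A+B)\lesssim L$ is absorbed by this bound; the case $|\lambda|\gg\|b\|_\infty$ is trivial since $|b-\lambda|\gtrsim|\lambda|$ pointwise. \emph{Case 3 ($L<\sigma_\sharp$ and $\max(A,B)<2M^{1/2}L^{1/2}$):} the target reduces to $\int_z^y\hat{f}\gtrsim L(A+B)$. By the $4\sigma_\sharp$-separation of critical points, $[z,y]$ lies within $\sigma_\sharp$ of at most one critical point $\gamma^*$ of order $m\leq N$. Away from the $\sigma_\sharp/10$-neighborhood of every critical point, $|b'|$ is bounded below by a positive constant, and the change of variable $u=b(x)$ applied to each of the (at most two) monotonicity pieces of $b|_{[z,y]}$ reduces the bound to the scalar estimate $\int_\alpha^\beta\sqrt{|u-\lambda|}\,du\gtrsim(\beta-\alpha)(|\beta-\lambda|^{1/2}+|\alpha-\lambda|^{1/2})$, which follows from the explicit antiderivative $\frac{2}{3}|u-\lambda|^{1/2}(u-\lambda)$ together with the algebraic inequality $A^3+B^3\geq\tfrac{1}{2}(A+B)(A^2+B^2)$, equivalent to $(A-B)^2(A+B)\geq 0$. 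Within the $\sigma_\sharp/10$-neighborhood of $\gamma^*$, I pass to $\xi = x-\gamma^*$ and replace $b(x)-\lambda$ by the normal form $c_m\xi^{m+1}+(b(\gamma^*)-\lambda)$ with $c_m := b^{(m+1)}(\gamma^*)/(m+1)!\neq 0$, then re-run the change-of-variable argument with $|b'|\approx|\xi|^m$.

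The main obstacle is the critical-point sub-case of Case 3: $b$ need not be monotone on $[z,y]$, and when $|b(\gamma^*)-\lambda|$ is small relative to $L^{m+1}$, $b-\lambda$ has one or two zeros inside $[z,y]$ at distance $\sim|b(\gamma^*)-\lambda|^{1/(m+1)}$ from $\gamma^*$. The integral must be decomposed according to these monotonicity pieces, and a direct computation on the normal form shows both sides of the desired inequality scale as $L^{(m+3)/2}$ with matching prefactors depending only on $m$ and $c_m$; the $O(\xi^{m+2})$ higher-order Taylor remainder is absorbed by taking $\sigma_\sharp$ small, consistent with Assumption~\ref{assump:b}. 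The resulting constants depend only on $N$, $\|b\|_{C^{N+2}}$, $\sigma_\sharp$, and the $J_0$ values $b^{(m_j+1)}(\gamma_j)$, as required.
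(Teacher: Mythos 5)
Your overall architecture (H\"older trick for Case~1, subinterval argument for Case~2, change of variable for Case~3) is genuinely different from the paper's, and Cases~1 and~2 are clean: the $\tfrac12$-H\"older bound on $|b-\lambda|^{1/2}$ is a nice observation that the paper does not use, and the away-from-zeros subinterval argument is essentially the paper's preliminary reduction in a different guise. The paper instead reduces immediately to a single critical point, assumes $N$ odd, normalizes $b\geq 0$ locally, and enumerates five geometric configurations of $y$, $z$ relative to the critical layer $y_c$.

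The gap is precisely in the critical-point sub-case of Case~3 — which you yourself flag as ``the main obstacle.'' Two problems. First, the claim that ``both sides of the desired inequality scale as $L^{(m+3)/2}$ with matching prefactors'' is false as stated: take $\gamma^*=0$, the normal form $b(\xi)=c\xi^{m+1}$, $\lambda=cr^{m+1}$ (so the critical layer is at $\xi=r$), and $z=r-\delta$, $y=r+\delta$ with $\delta\ll r\ll \sigma_\sharp$. This falls in your Case~3 (since $\max(A,B)\sim (r^m\delta)^{1/2}\ll L^{1/2}$ when $r$ is small), yet both sides are $\sim r^{m/2}\delta^{3/2}$, not $L^{(m+3)/2}=\delta^{(m+3)/2}$. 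The homogeneity $L\to 1$ does not reduce to a compact parameter set: the rescaled quantities $\tilde z=(z-\gamma^*)/L$ and $\tilde\lambda=(\lambda-b(\gamma^*))/L^{m+1}$ are unbounded. Second, the change of variable $u=b(x)$ inside the $\sigma_\sharp/10$-neighborhood of $\gamma^*$ does not reduce to the scalar estimate you proved: it introduces the singular Jacobian $du/|b'|\approx |u-b(\gamma^*)|^{-m/(m+1)}du$, so the integral becomes a weighted one, $\int |u-\lambda|^{1/2}|u-b(\gamma^*)|^{-m/(m+1)}\,du$, and your pointwise scalar inequality for $\int|u-\lambda|^{1/2}\,du$ does not apply. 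What you are asserting-but-not-doing at this point is exactly the content of the paper's five explicit cases (organizing according to whether $|y-y_c|$, $|z-y_c|$, and $y_c$ itself are of comparable or separated magnitudes), which is the actual work of the lemma.
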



\begin{proof}
    We make some preliminary reductions to simplify the proof.  We assume that $\lambda$ is in the range of $b$ as the case for $\lambda$ outside the range follows by similar but simpler arguments. For $\lambda$ away from one of the critical values, the proof follows from the monotone case \cite{JiaUniform2023}. Therefore, it suffices to consider $|\lambda| \ll \sigma_\sharp$ where we have assumed that the critical value is $0$.\footnote{The general case can be obtained by shifting $b$.} For $y,z$  an order 1 distance away from the set $b^{-1}(\lambda)$, we have that 
    \begin{align}
        \int_z^y|b(x) - \lambda|^{1/2} dx \gtrsim |y-z|.
    \end{align}
Therefore, it suffices to consider the case where $y,z$ are close to  one of the critical layers, and that this critical layer is close to a critical point (which recall we have taken to be 0). By shifting the argument of $b$, we may take the critical point to be 0. We only provide the proof for $N$ odd since the proof for $N$ even is similar. In the case where $N$ is odd, the leading order term in the Taylor expansion of $b$ around the critical point 0 is even. Therefore, we may assume that $b(x)\geq 0$ in a neighborhood of 0.\footnote{Replacing $b$ with $-b$ gives the case where $b$ is negative.} With these reductions, we have that there is a unique point $y_c \in [0, \sigma_\sharp]$ satisfying $b(y_c) = \lambda$. Finally, we assume that $y\geq |z|$. (The case where $-y\geq |z|$ follows by similar arguments). The proof will proceed via casework with the first case  being the most delicate. We consider the following cases: 
\begin{enumerate}
    \item $y_c \approx y \approx z$ 
    \item $y_c \approx y$, $z \approx -y_c$
    \item $ y \approx  y_c $ and $|z| \ll y$  
    \item  $y \ll y_c$
     \item  $y \gg y_c$
 \end{enumerate}
For Case 1, we write 
\begin{align}
    \int_z^y |b(x) - b(y_c)|^{1/2} dx &\approx |b'(y_c)|^{1/2}\int_z^y |x - y_c|^{1/2} dx \approx |b'(y_c)|^{1/2} |y_c|^{3/2}\\ &\approx |
    y -z| (|b(y) - b(y_c)|^{1/2} + |b(z) - b(y_c)|^{1/2}),
\end{align}
   where we have used that, on account of $y_c \gg |z -y_c| + |y - y_c|$, we have $b(x) - b(y_c) \approx b'(y_c)(x - y_c)$. For Case 2, we can find $\delta \in (0,1)$ such that for $ x \in [-\delta y_c, \delta y_c] $, we have 
   \begin{align}
       |b(x) - b(y_c)|^{1/2} \gtrsim  |b(y_c)|^{1/2}.
   \end{align}
Therefore, for some $\delta > 0$ depending only on $b$ and the implicit constant defining Case~2, we have 
\begin{align}
    \int_z^y|b(x) - b(y_c)|^{1/2} dx > \int_{- \delta y_c}^{\delta y_c } |b(x) - b(y_c)|^{1/2} dx \gtrsim b(y_c).
\end{align}
We also have 
\begin{align}
    |y-z| (|b(y) - \lambda |^{1/2} + |b(z) - \lambda|^{1/2} ) \lesssim b(y_c)
\end{align}
which completes the proof of this case. For Case 3, we have that 
$|b(y) -b(y_c)| + |b(z) - b(y_c)| \approx |b(z) - b(y_c)| \approx b(y_c)$. Also, $|y-z| \approx y \approx y_c$. Finally, for $x \in [z, y_c/2]$, $|b(x) - b(y_c)| \approx b(y_c)$. Therefore,
\begin{align}
    \int_z^y |b(x) - b(y_c)|^{1/2} dx &\gtrsim \int_z^{y_c/2} |b(x) - b(y_c)|^{1/2} dx \gtrsim |b(y_c)|^{1/2} y_c \\
    &\approx |y -z| (|b(y) - b(y_c)|^{1/2} + |b(z) - b(y_c)|^{1/2}). 
\end{align}
For Case 4, we have $x \in [z,y], |b(x) - b(y_c)| \approx b(y_c)$. Thus,
\begin{align}
    \int_z^y |b(x) - b(y_c)|^{1/2} dx &\gtrsim \int_z^{y} | b(y_c)|^{1/2} dx \gtrsim |b(y_c)|^{1/2} |y-z| \\
    &\approx |y -z| (|b(y) - b(y_c)|^{1/2} + |b(z) - b(y_c)|^{1/2}), 
\end{align}
as desired. 
For Case 5, we note  $|b(y) -b(y_c)| + |b(z)-b(y_c)| \approx |b(y)|$ (recall that we are assuming $z \leq y$). Furthermore, if $z \ll y$, then $|y-z| \approx y$ and it suffices to simply integrate over the interval $x \in [y/2,y]$ on which we have $|b(x) - b(y_c)| \approx b(y)$. Hence,
\begin{align}
     \int_z^y |b(x) - b(y_c)|^{1/2} dx &\gtrsim \int_{y/2}^{y} |b(x) - b(y_c)|^{1/2} dx \approx |y||b(y)| \\
     &\approx |y -z| (|b(y) - b(y_c)|^{1/2} + |b(z) - b(y_c)|^{1/2}).  
\end{align}
If $z \approx y$ then for $x \in [z,y],  |b(x) - b(y_c)| \approx b(x)$ and we obtain
\begin{align}
     \int_z^y |b(x) - b(y_c)|^{1/2} dx &\gtrsim \int_{z}^{y} |b(y)|^{1/2} dx \approx |y-z||b(y)| \\
     &\approx |y -z| (|b(y) - b(y_c)|^{1/2} + |b(z) - b(y_c)|^{1/2}).  
\end{align}

\end{proof}

With Lemmas \ref{lem:diag}, \ref{lem:ent}, and \ref{lemlow} we are now in position to prove the claimed fundamental solution bounds from Proposition \ref{k:bdds}.
\begin{proof}[Proof of Proposition \ref{k:bdds}]
As a consequence of Lemma \ref{lemlow}, by potentially taking a smaller value of $c_0$, for a fixed $z$, it suffices to prove the bounds for $y$ satisfying $|y-z|\leq \sigma_\sharp$. Therefore, for fixed $z$, by Assumption \ref{assump:b} there can be at most one critical point in $I_{\sigma_\sharp}(z)$. We introduce the length scale 
\begin{align}
\label{b1}
    \ell(x) :=  \frac{ \varepsilon^{1/2}}{ ( 
     \alpha + \varepsilon^{1/3}|B'(x)|^{2/3}  + |b(x) - \lambda|)^{1/2} }.
\end{align}
 For $y$ satisfying
    \begin{align}
\label{b2}    
        |y - z| \lesssim  \ell(z), 
    \end{align}
 \eqref{fs2} follows from Lemma \ref{lem:diag}. Next, assume that 
 \begin{align}
 \label{b3}
     |y - z| \gg  \ell(z).
 \end{align}
Assume that 
\begin{equation}
\label{b4}
    \int_{z + \ell(z)}^{y + 2 \pi  }\ell^{-1}(x)\ge  \int_{y}^{z - \ell(z)}\ell^{-1}(x).
\end{equation}
Then we can find $y_\ast\in[z+\ell(z),y + 2 \pi ]$ such that 
\begin{equation}
\label{b5}
     m_\ast:= c_0\int_{z + \ell(z)}^{y_*}\ell^{-1}(x)\,dx=  c_0\int_{y}^{z - \ell(z)}\ell^{-1}(x)\,dx. 
\end{equation}
We choose $\varphi$ from the entanglement inequality \eqref{ent0.5} satisfying the following conditions: 
\begin{align}
\label{b7}
    & \varphi'(x) =   c_0 \ell^{-1}(x) \varphi(x) \quad
    \text{ for }  x \in [z + \ell(z), y_*]; \notag\\
    & \varphi'(x) =   -c_0 \ell^{-1}(x) \varphi(x) \quad
    \text{ for }  x \in [y, z - \ell(z)], \quad \varphi(z + \ell(z))=\varphi(z - \ell(z)) = 1; \notag \\
    &\varphi''(x) \equiv 0, \quad\text{ for } x \in (z, z + \ell(z)) \cup (z - \ell(z),z) \cup(y_*, y - \ell(y) + 2\pi),\quad \varphi(z)=0.
\end{align}
Using \eqref{b7} and \eqref{ent0.5} we have 
\begin{align}
\label{b8}
   \int_{z- \ell(z)}^{z + \ell(z) } \left[  |\varphi'(x)|^2 - c_0^2
    \ell^{-2}(x)|\varphi(x)^2   \right] |K|^2 \,dx &\geq c_0^2 \varphi^2(y)\int_{y_*}^{y + 2 \pi } \ell^{-2}(x) |K|^2 \, dx \\
    &\gtrsim    c_0^2\varphi^2(y)\ell^{-2}(y)\int_{y - \ell(y) + 2 \pi }^{y +  2\pi }  |K|^2\,  dx \notag\\
    &= c_0^2\varphi^2(y)\ell^{-2}(y)\int_{y - \ell(y)  }^{y  }  |K|^2\,  dx. \notag
\end{align}
For $x \in [z- \ell(z), z + \ell(z)]$  we have 
\begin{align}
\label{b8.5}
     \int_{z-\ell(z)}^{z + \ell(z)} \left[  |\varphi'(x)|^2 - c_0^2
    \ell^{-2}(x)|\varphi(x)^2   \right] |K|^2 \,dx \lesssim \ell^{-2}(z) \int_{z-\ell(z)}^{z + \ell(z)}|K|^2 dx. 
    \end{align}
We can rearrange \eqref{b8} and use Lemma~\ref{lem:diag} to get 
\begin{align}
    \ell^{-1}(y)\int_{y - \ell(y)}^{y}|K|^2 dx &\lesssim \varphi^{-2}(y) \frac{\ell(y)}{\ell(z)}  \frac{1}{\ell(z)}\int_{z -\ell(z)}^{z + \ell(z)}|K|^2 dx\\
    &\lesssim \varphi^{-2}(y) \frac{\ell(y)}{\ell(z)} \varepsilon^{-2} \ell^2(z). \label{b8.6}
\end{align}
We have 
\begin{align}
\label{b9}
    \varphi(y) = \exp\left(   c_0\int_{y}^{z - \ell(z)} \ell^{-1}(x) \, dx   \right)  \geq  \exp(m_*).
\end{align}
Lemma \ref{lemlow}, the smoothness of $b$, and the size of $\ell(z)$  imply with a potentially smaller value of $c_0$ that
\begin{align}
\label{b10}
    m_* \geq \frac{c_0|y -z|}{\varepsilon^{1/2}} \left( |b(y) - \lambda|  + |b(z) - \lambda| + \varepsilon^{1/3}|B'(y)|^{2/3}  
 + \varepsilon^{1/3}|B'(z)|^{2/3}  + \alpha \right)^{1/2}.
\end{align}
Combining \eqref{b8.6}, \eqref{b10}, and using second derivative bounds on $K$ away from $y = z$, we have
\begin{align}
\label{b11}
    |K(y,z)|^2 &\lesssim \frac{\ell(y)}{\ell(z)} \exp\left( -2c_0\frac{|y-z|}{L(y,z,\lambda)} \right) \varepsilon^{-2}\ell^2(z).
\end{align}
We claim that 
\begin{align}
\label{b12}
    \frac{\ell(y)}{\ell(z)}\exp\left( -c_0\frac{|y-z|}{L(y,z,\lambda)} \right) \lesssim 1.
\end{align}
To see this, observe that if  $\ell(y) \lesssim \ell(z)$ then there is nothing to prove. Therefore, assume $\ell(y) \gg \ell(z)$. Since we have $|y -z| \gg \ell(z)$, we may assume that $|y-z| \gg \ell(y)$.\footnote{Indeed, if it were the case that $|y-z| \lesssim \ell(y)$, then we would have that $\ell(y) \approx \ell(z)$, which would be a contradiction.} Observe that we also have  
\begin{align}
    L(y,z,\lambda) \leq \ell(z).
\end{align}
Therefore,
\begin{align}
   \frac{\ell(y)}{\ell(z)}\exp\left( -c_0\frac{|y-z|}{L(y,z,\lambda)} \right)  \lesssim \frac{\ell(y)}{\ell(z)}\exp\left( -c_0\frac{\ell(y)}{\ell(z)} \right) \lesssim 1
\end{align}
as desired. Combining \eqref{b11} and \eqref{b12} then imply
\begin{align}
    |K(y,z)| &\lesssim \exp\left( -\frac{c_0}{2}\frac{|y-z|}{L(y,z,\lambda)} \right) \varepsilon^{-1}\ell(z).
\end{align}
This concludes the proof by taking $\frac{c_0}{2}$ in place of $c_0$. 
\end{proof}

\section{Monotone resolvent estimate}
\label{sec:monotoneresolventestimate}

In this section, we give a self-contained proof of the monotone resolvent estimate, which is used in Section~\ref{sec:efngluing}.

\begin{proposition}[Resolvent estimate]
    \label{pro:monotoneresolventestimate}
    Let $b : \R \to \R$ be Lipschitz and satisfy
    \begin{equation}
        \label{eq:monotonicity}
    0< m \leq b'(y) \leq M < +\infty \, , \quad \text{ for a.e. } y \in \R \, .
    \end{equation}
    Then the solution $f \in H^1(\R)$ to the resolvent problem
    \begin{equation}
        \label{eq:Cauchyproblemmonotone}
        i (b(y) + i \tau) f - \varepsilon \p_y^2 f = g \in L^2(\R) \, , \quad \tau \in \R
    \end{equation}
    satisfies
    \begin{equation}
        \label{eq:enhanceddissmonotone}
        \varepsilon^{1/3} B'^{2/3}  \| f \|_{L^2} + \varepsilon^{2/3} B'^{1/3} \| \p_y f \|_{L^2} \lesssim\| g \|_{L^2} \, ,
    \end{equation}
    where
    \begin{equation}
        B' := m \big(\frac{M}{m}\big)^{-2} \, .
    \end{equation}
    (Notice that $M \geq m$, so $M/m \geq 1$.)
\end{proposition}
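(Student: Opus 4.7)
The plan is a critical-layer splitting combined with a weighted energy estimate, in the spirit of the simpler monotone predecessor to Proposition~\ref{k:bdds}. Solvability is standard via Fredholm theory given the a priori bound, so I focus on the estimate.

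Since $b$ is a monotone homeomorphism of $\R$, there is a unique $y_0 := b^{-1}(\tau)$; after the translation $y \mapsto y - y_0$, assume $b(0)=\tau$ and set $\tilde b(y) := b(y) - \tau$, which vanishes at $0$ and satisfies $m \leq \tilde b' \leq M$. The equation rewrites as $-\varepsilon\p_y^2 f + i\tilde b f + \tau(i-1) f = g$. Testing against $\bar f$ and separating real and imaginary parts gives the basic energy estimate $\varepsilon\|\p_y f\|^2 \leq \tau_+\|f\|^2 + \|g\|\|f\|$, where $\tau_+ := \max(\tau,0)$, together with the identity $\int \tilde b|f|^2 = \Im\int g \bar f - \tau\|f\|^2$. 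Testing next against $\tilde b \bar f$ and taking the imaginary part, integration by parts produces the main term $\|\tilde b f\|^2$ along with a commutator $\varepsilon \Im \int b' \p_y f \bar f$ bounded by $\varepsilon M\|\p_y f\|\|f\|$; after using the previous identity to eliminate a mixed contribution and applying Young's inequality,
\begin{equation*}
    \|\tilde b f\|^2 \lesssim \|g\|^2 + \varepsilon M\|\p_y f\|\|f\| + |\tau|\|g\|\|f\| + \tau^2\|f\|^2 \, .
\end{equation*}

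The crux is then a critical-layer splitting at a free length scale $L>0$: using the 1D embedding $\|f\|_{L^\infty}^2 \leq 2\|f\|\|\p_y f\|$ on the near region $\{|y|<L\}$ and the bound $|\tilde b(y)| \geq m|y| \geq mL$ on the far region,
\begin{equation*}
    \|f\|^2 \leq 2L\|f\|\|\p_y f\| + (mL)^{-2}\|\tilde b f\|^2 \, .
\end{equation*}
Substituting the two energy estimates (ignoring $\tau$-contributions momentarily) and balancing $L\varepsilon^{-1/2}\|g\|^{1/2}\|f\|^{3/2}$ against $(mL)^{-2}M\varepsilon^{1/2}\|g\|^{1/2}\|f\|^{3/2}$ via $L := (M\varepsilon/m^2)^{1/3}$ yields
\begin{equation*}
    \|f\|^2 \lesssim M^{1/3}m^{-2/3}\varepsilon^{-1/6}\|g\|^{1/2}\|f\|^{3/2} + m^{-2/3}M^{-2/3}\varepsilon^{-2/3}\|g\|^2 \, .
\end{equation*}
The Young-type inequality $AB^{3/2} \leq \tfrac12 B^2 + C A^4$ with $B=\|f\|$ absorbs the first term, giving $\|f\|^2 \lesssim M^{4/3}m^{-8/3}\varepsilon^{-2/3}\|g\|^2$. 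Since $M^{4/3}m^{-8/3} \leq M^{8/3}m^{-4} = (B')^{-4/3}$ (using $M \geq m$), this is at least as strong as the claimed bound $\|f\| \lesssim \varepsilon^{-1/3}(B')^{-2/3}\|g\|$; the derivative estimate then follows by inserting this $\|f\|$-bound into the basic energy.

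The main subtlety is bookkeeping the $\tau$-dependent contributions $\tau_+\|f\|^2$, $|\tau|\|g\|\|f\|$, and $\tau^2\|f\|^2$ appearing in the basic and weighted energies. In the regime $|\tau| \lesssim \varepsilon^{1/3}(B')^{2/3}$ relevant to Proposition~\ref{prop:solvabilityofoutersystem}, each of these is dominated by the principal $\|g\|^2$ and $\|f\|^2$ pieces on the right-hand side and can be absorbed into the left-hand side at the final step of the argument, completing the proof.
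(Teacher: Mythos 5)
Your argument is correct and reaches the stated bound (indeed with a slightly sharper constant, $M^{2/3}m^{-4/3}$ versus $(B')^{-2/3} = m^{-2}M^{4/3}$), but it takes a somewhat different route from the paper's. Both proofs begin with the same pair of energy identities, testing the equation against $\bar f$ and then against $\tilde b\, \bar f$ to produce the bounds $\varepsilon\|\p_y f\|^2 \lesssim \|g\|\|f\|$ and $\|\tilde b f\|^2 \lesssim \|g\|^2 + \varepsilon M\|\p_y f\|\|f\|$. The difference is in the step that converts these into control of $\|f\|$ itself. The paper invokes the uncertainty-principle interpolation $\|f\|_{L^2} \lesssim \||y|f\|_{L^2}^{1/2}\|\p_y f\|_{L^2}^{1/2}$ (after observing $m|y| \leq |\tilde b(y)|$ so that $\||y|f\| \lesssim m^{-1}\|\tilde b f\|$), substitutes, and closes with Young's inequality. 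You instead perform an explicit critical-layer split at a free scale $L$, using Agmon's inequality on $\{|y|<L\}$ and the lower bound $|\tilde b|\geq mL$ on $\{|y|>L\}$, then optimize $L$; this is closer in spirit to the inviscid Van der Corput reasoning and to the near/far splitting used elsewhere in the paper. The two steps are of course closely related (the optimization over $L$ is one way to prove the interpolation inequality), but yours is the more hands-on and geometrically transparent of the two. One other remark: your concern about the $\tau$-dependent terms $\tau_+\|f\|^2$, $|\tau|\|g\|\|f\|$, $\tau^2\|f\|^2$ is well-founded when the equation is read literally as $i(b(y)+i\tau)f - \varepsilon\p_y^2 f = g$, since the real shift $-\tau f$ cannot be absorbed by shifting $b$; the paper's opening reduction ``we may suppose $\tau=0$ after redefining $b(y)$'' presumes that $\tau$ shifts the \emph{imaginary} part of the potential (i.e., the equation should read $i(b(y)-\tau)f - \varepsilon\p_y^2 f = g$), which is also what is needed in the application in Proposition~\ref{prop:solvabilityofoutersystem}, where $\Re\lambda$ is moved to the right-hand side. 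Read that way, the $\tau$-terms you flag never appear, and the bound is genuinely uniform in $\tau$; your restriction to $|\tau|\lesssim\varepsilon^{1/3}(B')^{2/3}$ is then unnecessary.
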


\begin{proof}
We follow the proof of Proposition 5.1~(i) in~\cite{JiaUniform2023}. 
We may suppose that $\tau = 0$ after redefining $b(y)$. Moreover, since $b(y)$ is monotone~\eqref{eq:monotonicity}, we may suppose that $b(y) = 0$ after translating in $y$.

We perform energy estimates by multiplying with $\bar{f}$ and $b(y) \bar{f}$. First, 
\begin{equation}
\varepsilon \int |\p_y f|^2 + i \int b |f|^2 = \int g \bar{f} \, ,
\end{equation}
whose real part yields
\begin{equation}
    \label{eq:firstpartofmonotoneenergy}
\varepsilon^{1/2} \| \p_y f \|_{L^2} \leq \| f \|^{1/2}_{L^2} \| g \|^{1/2}_{L^2} \, .
\end{equation}
Second,
\begin{equation}
\varepsilon \int b |\p_y f|^2 + \varepsilon \int b' \p_y f \bar{f} + i \int b^2 |f|^2 = \int b g \bar{f} \, ,
\end{equation}
whose imaginary part yields
\begin{equation}
\begin{aligned}
    m^2 \int y^2 |f|^2 \les \int b^2 |f|^2 &\les \| g \|_{L^2}^2 + \varepsilon \int |b'| |\p_y f| |f|\\
    &\les \| g\|_{L^2}^2 + M\varepsilon^{1/2} (\varepsilon^{1/2} \| \p_y f \|_{L^2}) \| f \|_{L^2}
\end{aligned}
\end{equation}
and, after we apply~\eqref{eq:firstpartofmonotoneenergy},
\begin{equation}
    \label{eq:secondpartofmonotoneenergy}
\begin{aligned}
\| |y| f \|_{L^2} &\les m^{-1} \| g \|_{L^2} +  m^{-1} M^{1/2} \varepsilon^{1/4} \| g \|^{1/4}_{L^2} \| f \|^{3/4}_{L^2} \\
& \les m^{-1} \| g \|_{L^2} + m^{-1} M^{2/3} \varepsilon^{1/3} \| f \|_{L^2} \, .
\end{aligned}
\end{equation}
As in~\cite{JiaUniform2023}, we utilize the interpolation inequality
\begin{equation}
\begin{aligned}
\| f \|_{L^2} &\lesssim \varepsilon^{-1/4} \| |y| f \|_{L^2}^{1/2} \left( \varepsilon^{1/2} \| \p_y f \|_{L^2} \right)^{1/2} \\
&\overset{\eqref{eq:secondpartofmonotoneenergy}}{\lesssim} \varepsilon^{-1/4} m^{-1/2} ( \| g \|^{1/2}_{L^2} + M^{1/3} \varepsilon^{1/6} \| f \|^{1/2}_{L^2}) \| f \|^{1/4}_{L^2} \| g \|^{1/4}_{L^2} \, .
\end{aligned}
\end{equation}
After applying Young's inequality, we obtain the resolvent estimate
\begin{equation}
    \| f \|_{L^2} \lesssim \varepsilon^{-1/3} m^{-2/3} \big( 1 + \left( \sfrac{M}{m} \right)^{2/3} \big) \, .
\end{equation}
For the derivative estimate, we plug this back into~\eqref{eq:firstpartofmonotoneenergy}.
\end{proof}


\subsubsection*{Acknowledgments}

DA was supported by NSF Grant No. 2406947 and the Office of the Vice Chancellor for Research and Graduate Education at the University of Wisconsin–Madison with funding from the Wisconsin Alumni Research Foundation. RB was partially supported by NSF Grant No. 2202974 and ERC-EPSRC Horizon Europe Guarantee EP/X020886/1. We thank Tarek Elgindi, Gautum Iyer, Hao Jia, and Kyle Liss for valuable discussions.

\bibliographystyle{plain}
\bibliography{Bibliodep}

\end{document}